\pgfplotsset{compat=1.15}
\newtheorem{theorem}{Theorem}[section]
\newtheorem{lemma}[theorem]{Lemma}
\newtheorem{proposition}[theorem]{Proposition}
\newtheorem{corollary}[theorem]{Corollary}
\theoremstyle{definition}
\theoremstyle{definition}
\newtheorem{definition}[theorem]{Definition}
\theoremstyle{remark}
\newtheorem{example}[theorem]{Example}
\newtheorem*{remark}{Remark}
\newcommand\lcm{\operatorname{lcm}}
\newcommand\A{\operatorname{A}}
\newcommand\PSL{{\rm PSL}}
\newcommand\PGL{\operatorname{PGL}}
\newcommand\Aut{\operatorname{Aut}}
\newcommand\Out{\operatorname{Out}}
\newcommand{\GL}{\operatorname{GL}}
\newcommand{\SL}{\operatorname{SL}}
\newcommand{\Map}{\operatorname{Map}}
\newcommand{\rad}{\operatorname{rad}}
\newcommand{\notlhd}{\not\hspace*{-.5em}\lhd}
  \def\GaL{\mathrm{\Gamma L}}
\def\PGaL{\mathrm{P\Gamma L}}   
  \def\PSigL{\mathrm{P\Sigma L}}
\def\PGaU{\mathrm{P\Gamma U}}
\def\S{{\rm S}}
\def\Z{{\rm Z}}
\def\D{{\rm D}}
\def\O{{\rm O}}
\def\J{{\rm J}}
\def\a{\alpha}
\def\calM{\mathcal{M}}
\def\calm{\mathcal{M}}
\def\l{\langle}
\def\r{\rangle}
\def\num{{\sf Num}}
\def\P#1#2{\mathscr{P}_{#1}(#2)}
\def\rr{x} 
\def\tt{y} 
\title[Bi-rotary maps with $\chi=-p^d$]{On Bi-rotary Maps of Negative Prime Power Euler Characteristic}
\author[Chen]{Jiyong Chen}
\address{$^{1}$ School of Mathematical Sciences\\
Xiamen University\\
Xiamen, Fujian 361005\\
P. R. China}
\email{chenjy1988@xmu.edu.cn}
\author[Ding]{Zhaochen Ding}
\address{$^{2}$ Department of Mathematics,\\
University of Auckland,\\
Private Bag 92019, Auckland 1142, New Zealand}
\email{dzha470@aucklanduni.ac.nz}
\author[Li]{Cai Heng Li}
\address{$^{3}$ SUSTech International Center for mathematics\\
Department of Mathematics\\
Southern University of Science and Technology \\
Shenzhen 518055\\
P. R. China}
\email{lich@sustech.edu.cn}
\keywords{bi-orientable map, bi-rotary map, Euler characteristic;}
\subjclass[2010]{05C25, 05C69, 94B25}
\begin{document}

\begin{abstract}

A map is bi-orientable if it admits an assignment of local orientations to its vertices such that for every edge, the local orientations at its two endpoints are opposite. Such an assignment is called a bi-orientation of the map.
A bi-orientable map is bi-rotary if its automorphism group contains an arc-regular subgroup that preserves the bi-orientation. In this paper, we characterize the automorphism group structure of bi-rotary maps whose Euler characteristic is a negative prime power.


\end{abstract}

\maketitle

\section{Introduction}


A \emph{map} is a cellular embedding of a connected graph $\Gamma$ on a (closed) surface $\mathcal{S}$.
The graph $\Gamma$ is called the \emph{underlying graph} of the map, while the surface $\mathcal{S}$ is called the \emph{supporting surface} of the map. The \emph{Euler characteristic of a map} is the Euler characteristic of the supporting surface of the map.


Let $\calM$ be a map.
For an edge $e=[\a,e,\a']$, the two faces of $\calM$ incident with $e$ is denoted by $f$ and $f'$.
For a subgroup $X\le\Aut\calM$, the map $\calM$ is called \emph{$X$-vertex-rotary} if $X$ is arc-regular on the $\calM$, and $X_\a$ is cyclic.
In this case, $X$ contains an element $\rr$ and an involution $\tt$ such that 
\[X_\a=\l \rr\r,\ X_e=\l \tt\r.\]
We call the pair $(\rr,\tt)$ a \emph{rotary pair} of $\mathcal{M}$.
Note that $X_\a$ acts regularly on $E(\a)$, the edge set incident with $\a$, and
$\tt$ fixes $e$ and interchanges $\a$ and $\a'$, and 
\[\mbox{either $\tt$ interchanges $f$ and $f'$, or $\tt$ fixes both $f$ and $f'$.}\] 
In the former case, $\calM$ is an \emph{$X$-rotary map} (also called \emph{orientably regular}).
For the latter, $\calM$ is an \emph{$X$-bi-rotary map}.
In the work cited as \cite{li2024Locally}, it is demonstrated that $X=\Aut^+(\mathcal{M})$, the orientation-preserving automorphism group of $\mathcal{M}$ if $\mathcal{M}$ is $X$-rotary, and $X=\Aut^b(\mathcal{M})$, the bi-orientation-preserving automorphism group of $\mathcal{M}$ if $\mathcal{M}$ is $X$-bi-rotary.
Let $k$ be the valency of $\mathcal{M}$ and let $m$ be the face length of $\mathcal{M}$. Then $(k,m)$ is the \emph{type} of $\mathcal{M}$.

If $\calM$ is an \emph{$X$-bi-rotary} map, then it is known that 
the face stabilizer $X_f$ is $\l \tt,\tt^\rr\r$, which has two orbits on the edges incident with $f$.
Since $X_\a$ is arc-regular, we have $k=|\rr|$ and $m=|\l\tt,\tt^\rr\r|=2\cdot|\tt^\rr\tt|=2\cdot|[\rr,\tt]|$.
Therefore, the Euler characteristic of $\calM$ is
\[|X|(\frac{1}{|\rr|}-\frac{1}{2}+\frac{1}{2\cdot|[\rr,\tt]|})=|X|(\frac{1}{k}-\frac{1}{2}+\frac{1}{m}).\]
We refer to \cite{dazevedoBirotaryMapsNegative2019,li2024Locally} for details.

Bi-rotary maps were introduced by Wilson \cite{wilson_1978} and are 
 closely related to rotary maps via the Petrie dual operator \cite{LINS1982171}, which transforms rotary maps into bi-rotary maps and vice versa while preserving the underlying graphs.
 Rotary maps and bi-rotary maps are two of the fourteen classes of edge-transitive maps defined in \cite{graver1997Locally} and represent the highest `level of symmetry' with respect to preservation of orientations or bi-orientations.
 Significant contributions have been made to rotary maps (see, for example, \cite{Asciak2023OrientableAN,Conder2016OrientablyregularMW,orireg-multiK_n,james1985Regular}). However, research on bi-rotary maps is still relatively underdeveloped compared to that on rotary maps.

 A central problem in topological graph theory is classifying `highly symmetric' maps on a given surface. It is well-known that the Euler characteristic and orientability of a surface uniquely determine the surface, so one would like to characterize or classify bi-rotary maps of a given Euler characteristic.
 Although there are many studies on the characterization of rotary maps and regular maps (maps whose full automorphism group acts transitively on flags) with a specific Euler characteristic (see, for example, \cite{dazevedo2004Classification,Conder2012ClassificationOR,CONDER20102620,Gill2013,Hua2024RegularMW}), there has been relatively little work focused on bi-rotary maps.
 Antonio and d'Azevedo \cite{bredadazevedoRegularPseudoorientedMaps2015} used computational methods to classify bi-rotary maps of Euler characteristic $\chi\ge -16$.
In 2019, d'Azevedo, Catalano, and \v{S}ir\'{a}\v{n} \cite{dazevedoBirotaryMapsNegative2019} classified bi-rotary maps of negative prime characteristic. 
Li, Praeger, and Song \cite{li2024Locally} introduced the concept of a vertex-rotary map to encompass rotary maps and bi-rotary maps, addressing them uniformly through the framework of coset map theory.

In this paper, we aim further to characterize bi-rotary maps of negative prime power Euler characteristic.

Let $\mathcal{M}$ be an $X$-bi-rotary map with a rotary pair $(x,y)$.
It is known that $X=\l x,y\r$ and $|y|=2$ (see \cite{li2024Locally}).
Conversely,
it is known that, given a group $X$ with elements $x,y\in X$ such that $X=\l x,y\r$ and $|y|=2$, there is a unique $X$-bi-rotary map (up to isomorphism) $\mathcal{M}$ such that $(x,y)$ is a rotary pair of $\mathcal{M}$ (see \cite[Section 3]{dazevedoBirotaryMapsNegative2019} or \cite[Section 4.2]{li2024Locally} for a detailed construction).
In this case, we denote the corresponding map $\calm$ by $\Map(X,x,y)$ and say that $(x,y)$ is a \emph{rotary pair} of $X$.
Therefore, our problem is equivalent to characterize a group $X$ with a rotary pair $(\rr,\tt)$ of $X$ such that
\[|X|(\frac{1}{|\rr|}-\frac{1}{2}+\frac{1}{2\cdot|[\rr,\tt]|})=-p^n\]
for some prime $p$.

Let $\calm$ be an $X$-bi-rotary map of an Euler characteristic equal to a negative prime power $-p^n$, let $G=X/\O_p(X)$, let $\rho=x\O_p(X)$, and let $\tau=y\O_p(X)$. 
Set $\bar{k}=|\rho|$ and $\bar{m}=2\cdot|[\rho,\tau]|$.
Then $k=p^\alpha\bar{k}$ and $m=p^\beta\bar{m}$ for some integers $\alpha$ and $\beta$.
Note that $\Map(G,\rho,\tau)$ is of type $(\bar{k},\bar{m})$ and is a quotient map of $\mathcal{M}$  (see \cite{dazevedoBirotaryMapsNegative2019}).

In this paper, we characterize $X$ by classifying all possible structures of $G$.
We note that a special case occurs when $p \nmid |X|$, which is examined in \cite{Li2024ArctransitiveMW}. However, determining the conditions under which the Euler characteristic is equal to $-p^n$ remains a challenge.
The main results are summarized in the following two theorems, addressing the solvable and non-solvable cases, respectively.


\begin{theorem}\label{thm:main-result-solvable}
    If $G$ is non-abelian and solvable,
    then $G=\l a\r{:}(\l b\r{\times}H)$ where $H$ is a Hall $\{2,3\}$-subgroup of $G$, and all possible $H$ and $(\bar{k},\bar{m})$ are listed in Table~\ref{table solvable}.
    Moreover, in each case, $G$ has a rotary pair $(\rho,\tau)$, and the examples of $\mathcal{M}$ corresponding to each case are constructed in Section~\ref{sec:example}.
\end{theorem}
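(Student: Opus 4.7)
The plan is to first translate the Euler characteristic condition on $\mathcal{M}$ into an arithmetic constraint on $G$. Writing $k=p^{\alpha}\bar k$, $m=p^{\beta}\bar m$, and $|X|=p^{\gamma}|G|$, the identity $|X|(\tfrac{1}{k}-\tfrac{1}{2}+\tfrac{1}{m})=-p^{n}$ becomes
\[
|G|\Bigl(\frac{1}{\bar k}-\frac{1}{2}+\frac{1}{\bar m}\Bigr)=-p^{n'}
\]
for some nonnegative integer $n'\le n$. Together with $\bar k,\bar m\ge 2$, this diophantine equation severely restricts the triples $(|G|,\bar k,\bar m)$ in terms of $p$ and reduces the problem to a finite list of arithmetic candidates.

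Next I would analyse the group-theoretic structure of $G$. By maximality of $\O_p(X)$ one has $\O_p(G)=1$, and by construction $G=\langle \rho,\tau\rangle$ is $2$-generated with a distinguished involution $\tau$ and cyclic $\langle\rho\rangle$. Since $G$ is solvable and non-abelian, the Fitting subgroup $F(G)$ is nontrivial and, with $\O_p(G)=1$, has no $p$-part. Combining Hall's theorem with the $2$-generation of $G$, I would show that the Hall $\{2,3\}'$-subgroup of $G$ is metacyclic of the form $\langle a\rangle\rtimes\langle b\rangle$ with $\langle a\rangle$ characteristic cyclic, and that a Hall $\{2,3\}$-subgroup $H$ can be chosen to contain $\tau$ and to centralise $\langle b\rangle$; a coprime-action/Frattini argument then yields the splitting $G=\langle a\rangle\rtimes(\langle b\rangle\times H)$ asserted in the theorem.

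The remaining work is a finite enumeration of $H$. The diophantine identity above bounds $|H|$, so one runs through the small $\{2,3\}$-groups that can occur (cyclic, dihedral, $A_4$, $S_4$, and a handful of extensions) and for each determines which pairs $(\bar k,\bar m)$ are compatible with the existence of a generating involution $\tau\in H$ and an element $\rho\in \langle a\rangle\rtimes(\langle b\rangle\times H)$ with the prescribed commutator order. The main obstacle is the case $p\in\{2,3\}$: here $p$ divides $|H|$, so the Hall decomposition is no longer coprime to the $p$-radical and one must simultaneously track the $p$-part of $|\rho|$ and $|[\rho,\tau]|$ together with the action of $\tau$ on $\langle a\rangle$. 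A further subtlety is ruling out spurious arithmetic solutions by verifying $G=\langle\rho,\tau\rangle$ (rather than some proper subgroup) in each surviving case. Once the list of admissible $(H,\bar k,\bar m)$ matches Table~\ref{table solvable}, the explicit rotary pairs realising each entry are exhibited in Section~\ref{sec:example}, completing the proof.
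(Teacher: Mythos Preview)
Your reduction in the first paragraph is incorrect, and this is the central gap. Writing $k=p^{\alpha}\bar k$, $m=p^{\beta}\bar m$, $|X|=p^{\gamma}|G|$ in
\[
\frac{|X|}{k}-\frac{|X|}{2}+\frac{|X|}{m}=-p^{n}
\]
does \emph{not} yield $|G|(\tfrac{1}{\bar k}-\tfrac{1}{2}+\tfrac{1}{\bar m})=-p^{n'}$: the three summands $|X|/k$, $|X|/2$, $|X|/m$ generally have different $p$-valuations (namely $\gamma-\alpha$, $\gamma$ or $\gamma-1$, and $\gamma-\beta$), so no common $p$-power factors out. The paper explicitly warns (Remark~(2) after Theorem~\ref{thm:main-non-solvable}) that the quotient map $\Map(G,\rho,\tau)$ need not have prime-power Euler characteristic. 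What \emph{does} pass to $G$ is the weaker arithmetic condition $|G|=p^{n'}\cdot\lcm(|\rho|,\,2|[\rho,\tau]|)$ (the paper's $\mathscr{P}_1^+(p)$ property), and it is this, not your diophantine equation, that drives the classification. Consequently your claim that ``the diophantine identity above bounds $|H|$'' has no force: $|H|$ is not bounded at all, and the enumeration of $H$ must come from structure, not arithmetic.

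Two further structural points are understated. First, to obtain $G=\langle a\rangle{:}(\langle b\rangle\times H)$ you need the Hall $\{2,3\}'$-subgroup $K$ to be \emph{normal} in $G$; the paper deduces this from a theorem on groups all of whose Sylow subgroups are metacyclic (Lemma~\ref{solvable-gp-structure}(ii), citing \cite{Sylow-metacyclic1981}), after first showing via the Fitting quotient that every Sylow subgroup of $G$ is metacyclic. Your sketch (``Hall's theorem with the $2$-generation'') does not supply this. Second, the list of admissible $H$ is not obtained by bounding $|H|$ but by (a) using that every Sylow $r$-subgroup for $r\ne p$ is cyclic or dihedral to force $H_3$ and $H_2$ into a short list, (b) a case split on whether $H_3\lhd H$ (Lemmas~\ref{hall23-normal}--\ref{hall23-not-normal}), and then (c) invoking the $\mathscr{P}_1(p)$ condition on the quotient $S_4$ to eliminate $S_4\times\Z_{3^e}$ and $\Z_2^2{:}\D_{2\cdot 3^e}$ (Lemma~\ref{lem:P1psolvable}). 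The determination of $(\bar k,\bar m)$ then proceeds via the normal-form Lemma~\ref{gensofsolvalbegp} and Corollary~\ref{chisolvable}, not by solving a diophantine equation.
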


In Table~\ref{table solvable}, all possible $H$ and type $(\bar{k},\bar{m})$ are listed. We also provide a possible choice of $(\rho,\tau)$.
The parameters $e$ and $f$ are positive integers.
The parameters $k_1=|\rho_0|$, $k_2=|b|$, and $m_2=|a|$.
Let \[\kappa=\left\{\begin{matrix}b\rho_0&\mbox{if }H\cong \Z_{k_1}\mbox{ or }\Z_{k_1}\times\Z_2;\\
bc& \mbox{if } H\cong\operatorname{D}_{2\cdot 3^e}\mbox{ and }p=2;\\
bcd&\mbox{if } H\cong\operatorname{D}_{2\cdot 3^e}\mbox{ and }k=2k_2; \\
bcd_1\mbox{ or }
bcd_1d_2&\mbox{if } H\cong\Z_{2^f}{\times}\operatorname{D}_{2{\cdot} 3^e};\\
bc&\mbox{if } H\cong\Z_2^2{:}\Z_{3^e}.
\end{matrix}\right.\]
The parameter $m'=|\operatorname{C}_{\l a\r}(\l\kappa\r)|$.
\begin{table}[ht]
\label{table solvable}

\begin{tabular}{cccc}
$H$  & $(\bar{k},\bar{m})$ &  $(\rho,\tau)$ &   Comments \\  \midrule[1pt]
$\Z_{k_1}=\l \rho_0\r$     & $(k_1k_2m_2',2m_2)$ & $(ab\rho_0,\rho_0^{k_1/2})$ & \\ \hline

$\Z_{k_1}{\times} \Z_2=\l \rho_0\r{\times }\l \tau_0\r$     & $(k_1k_2m_2',2m_2)$ & $(ab\rho_0,\tau_0)$ & $p=2$ \\ \hline

\multirow{2}{*}{$\operatorname{D}_{2{\cdot} 3^e}= \l c\r {:} \l d\r$}    & $(3^ek_2m_2',2{\cdot} 3^e m_2)$ & $(abc,d)$ &  $p=2$ \\\cline{2-4}

        & $(2k_2,2{\cdot} 3^e m_2)$ & $(abcd,d)$ & $p\ne 3$\\ \hline

\multirow{2}{*}{$\Z_{2^f}{\times}\operatorname{D}_{2{\cdot} 3^e} =\l d_1\r{\times}\l c\r{:}\l d_2\r$}    & $(2^f3^ek_2m_2',2{\cdot} 3^e m_2)$ & $(abcd_1,d_2)$ &\multirow{2}{*}{ $p=2$} \\\cline{2-3}

        & $(2^fk_2m_2',2{\cdot} 3^e m_2)$ & $(abcd_1d_2,d_2)$ & \\ \hline

$\Z_{2}^2{:} \Z_{3^e}=(\l d_1\r{\times}\l d_2\r){:}\l c\r$     & $(3^ek_2,4)$ & $(bc,d_1)$ & $p\ne 2$ and $a=1$ \\ \bottomrule[1pt]
\end{tabular}

\vspace*{2mm}

\caption{Possible structures for $H$ and standard generating pairs for $G$}
\end{table}

\begin{theorem}\label{thm:main-non-solvable}
    Denote $\rad(G), G^{(\infty)}$ by $R,D$ respectively. 
    Let $\num$ be the set consisting of powers of 2 greater than or equal to 4 and Mersenne primes, as well as Fermat primes.
    If $G$ is  non-solvable, then
    $G=(R\times D).\Z_f$ where $f\le 2$ and $D\cong\PSL(2,q)$. 
    Moreover,  one of the following holds:
    \begin{itemize}
        \item [\rm (i)] $p>2$, $f=1$,  $R$ is a cyclic group of odd order, and $q=2p^t\pm 1$ is a prime or $q=p^t$ is a power of $p$;
        \item [\rm (ii)] $p=2$, $f=1$, and $q\in {\num}$;
        \item [\rm (iii)] $p=2$, $f=2$,  $G=R\times(\PSL(2,q).\Z_2)$ and  $q\in {\num}$;
        \item [\rm (iv)] $p=2$, $f=2$, $\O_2(G/D)=1$, and  $q\in {\num}$.
    \end{itemize}
    Examples for each case are constructed in Section~\ref{sec:example}.
\end{theorem}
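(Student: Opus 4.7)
The plan is to first show, via the Fuchsian-type Euler-characteristic identity, that $G/R$ is almost simple with socle $T\cong\PSL(2,q)$, then to analyze the solvable radical and the outer layer to separate the four cases. Throughout, write $R=\rad(G)$ and $D=G^{(\infty)}$.

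Writing $|X|=|G|\cdot p^s$ with $p^s=|\O_p(X)|$, $k=p^\alpha\bar k$, and $m=p^\beta\bar m$, the Euler-characteristic identity $|X|(1/k-1/2+1/m)=-p^n$ rearranges to
\[
|G|\bigl(p^{\alpha+\beta}\bar k\bar m-2p^\alpha\bar k-2p^\beta\bar m\bigr)\;=\;2\,p^{\,n-s+\alpha+\beta}\,\bar k\bar m,
\]
so the $p'$-part of $|G|$ divides $2\bar k\bar m$. Here $\bar k=|\rho|$ and $\bar m/2=|[\rho,\tau]|$ are orders of elements of $G$, hence bounded above by $\exp(G)$. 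Since $\rad(G/R)=1$, the socle $\Soc(G/R)=T_1\times\cdots\times T_r$ is a direct product of non-abelian simple groups, so $|T_1|_{p'}\cdots|T_r|_{p'}\mid 2\bar k\bar m$. As $|T_i|_{p'}$ grows much faster than the exponent of $T_i$ for every family of non-abelian simple groups, this bound forces $r=1$, so $G/R$ is almost simple with socle $T$. A case walk through the alternating, sporadic, classical, and exceptional Lie families, using the $p'$-divisibility together with the existence of a $(2,\bar k,\bar m)$-generating pair for $T$ with $\tau$ an involution, eliminates every candidate other than $T\cong\PSL(2,q)$.

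With $T\cong\PSL(2,q)$, $|T|=q(q^2-1)/\gcd(2,q-1)$, and each element order of $T$ divides one of $q$, $(q-1)/\gcd(2,q-1)$, or $(q+1)/\gcd(2,q-1)$. The $p'$-divisibility of $|T|$ by $2\bar k\bar m$ then forces each of $q$, $q-1$, $q+1$ to be a $p$-power times a divisor of $2\bar k\bar m$, and a case split on $p$ yields: for $p>2$, $q=p^t$ or $q=2p^t\pm 1$ is a prime; for $p=2$, $q\in\num$. Since $D$ is perfect and surjects onto $T$ modulo $R$, the Schur multiplier of $\PSL(2,q)$ combined with $\O_p(G)=1$ forces $D\cap R=1$, so $D\cong\PSL(2,q)$. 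A coprime-action argument using that $R$ is solvable and $D$ has no proper subgroup of index dividing $|R|$ gives $[D,R]=1$, whence $R\times D\trianglelefteq G$ and $G/(R\times D)\hookrightarrow\Out(\PSL(2,q))$. The rotary-pair condition restricts $f:=[G:R\times D]\le 2$, and the four cases (i)--(iv) then separate by the parity of $p$, the value of $f$, and the structure of $G/D$; existence in each case is confirmed by the constructions in Section~\ref{sec:example}.

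The main obstacle is the elimination of simple groups other than $\PSL(2,q)$ in the second paragraph: for each candidate $T$ one must combine the $p'$-divisibility $|T|_{p'}\mid 2\bar k\bar m$ with the existence of a $(2,\bar k,\bar m)$-generating pair for $T$ having an involution, and with the positivity $p^{\alpha+\beta}\bar k\bar m>2p^\alpha\bar k+2p^\beta\bar m$ forced by the negative Euler characteristic. The ensuing Mersenne/Fermat/$p$-power trichotomy pinning down $q$, and the verification that the radical $R$ respects the parity constraint in each of the four cases, then require a careful arithmetic case analysis.
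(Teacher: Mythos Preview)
Your strategy is similar in spirit to the paper's, but the paper organizes the argument through a cleaner structural framework. Rather than the raw divisibility $|G|_{p'}\mid 2\bar k\bar m$, the paper extracts from the Euler-characteristic identity the stronger fact that for every prime $r\neq p$ a Sylow $r$-subgroup of $G$ is contained in one of the cyclic-or-dihedral subgroups $\langle\rho\rangle$ or $\langle\tau,\tau^\rho\rangle$ (the $\mathscr{P}_2(p)$ condition). This Sylow criterion kills $r\ge 2$ in the socle decomposition in one line (both $2$ and $3$ divide each $|T_i|$, so the Sylow $2$- and $3$-subgroups of $T_1\times T_2$ are neither cyclic nor dihedral), and via the classification of simple groups with cyclic or dihedral Sylow subgroups reduces quickly to a short list ($\PSL(2,p^t)$, $\PSL(2,r)$ for $r$ prime, $\mathrm{Sz}(q)$, $\A_7$, $\J_1$); the last three are then excluded by the finer $\mathscr{P}_1(p)$ condition. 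Your element-order bound can be made to work, but the case walk is longer, and your reduction to a single factor is not solid as stated: element orders in $G$ can be products of element orders from the various $T_i$, so $|T_1|_{p'}\cdots|T_r|_{p'}\mid 2\bar k\bar m$ does not by itself contradict $r\ge 2$ without further input.

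There is one step you pass over too quickly: the bound $f\le 2$. In the paper this takes real work (Lemmas~\ref{lem:glcentralizer} and~\ref{lem:M/R}). For $p$ odd the non-orientability condition $\langle[\rho,\tau],\rho\rangle=G$ forces the image of $\rho$ in $G/R$ to lie in $\PSigL(2,q)$, and then one uses that the centralizer in $\PSigL(2,q)$ of a torus of order $(q\pm1)/\gcd(2,q-1)$ is the torus itself to conclude $f=1$; for $p=2$ a parallel centralizer argument gives $f\le 2$. Merely invoking ``the rotary-pair condition'' does not suffice, and this centralizer fact is the technical heart of the bound on $f$. Likewise, your derivation of $[D,R]=1$ via a coprime-action argument is over-complicated: once $D\cap R=1$ with both $D$ and $R$ normal in $G$, one has $[D,R]\le D\cap R=1$ immediately.
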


\begin{remark}
    \begin{enumerate}[{\rm (1)}]
        \item In Theorems~\ref{thm:main-result-solvable} and \ref{thm:main-non-solvable}, we characterize only the quotient group $X/\O_p(X)$, rather than attempting a full characterization of $X$ itself. 
        This is because $\O_p(X)$ can have an arbitrarily complex structure. For instance, see the examples constructed using Magma\cite{MAGMA} in Section~\ref{sec:prelim}.
        \item The quotient map $\Map(G,\rho,\tau)$ does not necessarily have Euler characteristic equal to a prime power. In Section~\ref{sec:example}, explicit examples of $\Map(X,x,y)$ are provided for all cases covered by Theorem~\ref{thm:main-result-solvable} and Theorem~\ref{thm:main-non-solvable}.
    \end{enumerate}
\end{remark}

Our characterization of the group structure enables the classification of bi-rotary maps of Euler characteristic $-p^n$ for small integer $n$. The classification results for $n\leq 4$ will be presented in a future paper.

\section{Preliminaries}\label{sec:prelim}

In this section, we present fundamental algebraic theory related to bi-rotary maps.
For further reading on algebraic map theory, we refer to \cite{jones2019Automorphism,lando2004Graphs,siran2013How}.

\subsection{Bi-rotary maps: Algebraic theory}\label{subsect:map}
The following lemma gives fundamental properties of a bi-rotary map.
For proof details, see \cite{dazevedoBirotaryMapsNegative2019}.

\begin{lemma}
    \label{lem:fundamental}
    Let $\calm=\Map(X,x, y)$ be a bi-rotary map.
    Then the following statements hold.
    \begin{itemize}
        \item [\rm (i)] $\calm$ is non-orientable if and only if $X=\l [x, y], x\r$. 
        \item [\rm (ii)] The Euler characteristic $\chi$ of $\calm$ is 
 \[\chi=|X|\left(\frac{1}{|x|}-\frac{1}{|y|}+\frac{1}{|\l y, y^x\r|}\right)=|X|\left(\frac{1}{k}-\frac{1}{2}+\frac{1}{m}\right),\]\
 where $(k,m)=(|x|, |\l y, y^x\r|)$ is the type of $\calm$.
        \item [\rm (iii)] Let $\calm_1=\Map(X_1,x_1,y_1)$ be a bi-rotary map. Then $\calm\cong\calm_1$ if and only if there exists an isomorphism $\sigma:X\rightarrow X_1$ such that $\sigma(x)=x_1$ and $\sigma(y)=y_1$.
    \end{itemize}
\end{lemma}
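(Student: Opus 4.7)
The plan is to treat the three parts in increasing order of depth, with (ii) and (iii) arising as essentially formal consequences of the coset map construction and (i) requiring a genuinely topological input.

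For (ii), I would first read off the relevant stabilisers in $\Map(X,x,y)$: by the coset description of a vertex-rotary map, $X_\alpha=\langle x\rangle$, $X_e=\langle y\rangle$, and (as asserted in the paragraph preceding the lemma) $X_f=\langle y,y^x\rangle$. Since $y$ and $y^x$ are involutions, $\langle y,y^x\rangle$ is dihedral of order $2|yy^x|$. A direct manipulation shows $[x,y]^{-1}=yx^{-1}yx=yy^x$, so $|X_f|=2|[x,y]|=m$. Substituting $|V|=|X|/k$, $|E|=|X|/2$ and $|F|=|X|/m$ into $\chi=|V|-|E|+|F|$ gives the stated formula, and both displayed expressions coincide since $|y|=2$ and $|\langle y,y^x\rangle|=m$.

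For (iii), the coset map construction recovers $\calm$ canonically from the triple $(X,x,y)$ once a base arc is fixed: the vertex, edge and face sets are the right cosets of $X_\alpha$, $X_e$ and $X_f$, with incidence defined by nonempty intersection, and $X$ acts by right multiplication. An isomorphism $\sigma\colon X\to X_1$ with $\sigma(x)=x_1$ and $\sigma(y)=y_1$ therefore sends the stabiliser system of $\Map(X,x,y)$ to that of $\Map(X_1,x_1,y_1)$ and induces a map isomorphism. Conversely, a map isomorphism $\phi\colon\calm\to\calm_1$ conjugates the arc-regular action of $X$ on $\calm$ to that of $X_1$ on $\calm_1$, and combined with the choice of base arc it determines the required $\sigma$ sending $(x,y)$ to $(x_1,y_1)$.

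Part (i) is the main obstacle, since orientability of a surface is a topological notion that must be matched with a group-theoretic condition. The strategy is to single out the subgroup $Y=\langle x,[x,y]\rangle$ as the candidate ``orientation-preserving'' subgroup of $X$: the element $x$ rotates around a vertex while preserving a local orientation, and $[x,y]$ (equal up to inversion to $yy^x$) traces a closed loop along the boundary of a face. I would first check that $Y$ is normal in $X$ of index at most $2$ by verifying that $y^{-1}xy$ and $y^{-1}[x,y]y$ both lie in $Y$, using $y^2=1$ and standard commutator identities. Then, following the framework of \cite{li2024Locally,dazevedoBirotaryMapsNegative2019}, I would identify $Y$ with the stabiliser of an orientation of the supporting surface in the orientable case, so that $\calm$ is orientable precisely when $[X:Y]=2$. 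The non-orientable case is therefore $Y=X$, i.e.\ $X=\langle [x,y],x\rangle$. The delicate point is the second step, where the algebraic index condition must be reconciled with the topological obstruction; this is exactly where I would appeal to the existing treatment in the cited references rather than rebuild the correspondence from scratch.
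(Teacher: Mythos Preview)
Your proposal is correct in substance, but note that the paper itself does not give a proof of this lemma at all: it simply writes ``For proof details, see \cite{dazevedoBirotaryMapsNegative2019}.'' So there is no in-paper argument to compare against; the lemma is being imported wholesale from the literature.

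That said, your sketch is sound. For (ii) the stabiliser orders are exactly as you state, and the identity $[x,y]^{-1}=yy^x$ gives $|X_f|=2|[x,y]|=m$, whence Euler's formula yields the displayed expression. For (iii) the coset-map description makes both directions routine. For (i) your normality check is fine: from $x^y=x[x,y]\in Y$ one gets $[x,y]^y=(x^{-1}x^y)^y=(x^y)^{-1}x=[x,y]^{-1}\in Y$, so $Y=\langle x,[x,y]\rangle$ is normal of index at most $2$, and the remaining identification of $Y$ with the orientation-preserving subgroup is precisely the content you correctly attribute to \cite{dazevedoBirotaryMapsNegative2019,li2024Locally}. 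In short, you have supplied an honest outline where the paper only gives a pointer; nothing needs to be changed.
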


Let $\calm=\Map(X, x, y)$ be a bi-rotary map and suppose $N\lhd X$. The quotient map $\calm/N$ is defined as the map $\Map(X/N, x N, y N)$. Considering the quotient map of a bi-rotary map provides an effective approach to reduce problems to simpler cases via group-theoretic methods. 
In this paper,
as mentioned in the Introduction,
we do not attempt to determine the structure of $\O_p(X)$ in the automorphism group $X$.  
The following example \ref{exm:complexOp(X)} illustrates the difficulty in achieving a complete classification of $\O_p(X)$.
Hence, we focus on determining the structure of the quotient group $X/\O_p(X)$ and the corresponding quotient map $\calm/ \O_p(X)$. Two special degenerate cases arise: (i) $X/\O_p(X)=1$; (ii) $X/\O_p(X)$ is a non-trivial abelian group. We discuss these two special cases in the following subsections.

\begin{example}
    \label{exm:complexOp(X)} 

Let $D=\l x,y| x^3, y^2,[x,y]^4\r$ be a finitely presented group. 
This gives an infinite bi-rotary map $\mathcal{U}=\Map(D, x,y)$. 
As shown in \cite{dazevedoBirotaryMapsNegative2019}, there is a normal subgroup $N$ of $D$ such that $D/N=T\cong \PSL(2,7)$ and this gives the quotient map $\mathcal{U}/N$, which is a bi-rotary map of Euler characteristic $-7$.
Let $N=N_0 \ge N_1\ge N_2\ge \dots$ be the lower exponent-$7$  central series of $N$. By the aids of Magma\cite{MAGMA}, $|N/N_5|=7^{9224}$. For any normal subgroup $L$ of $D$ with $N_5\le L<N$, $\mathcal{U}/L$ is a smooth cover of $\mathcal{U}/N$ whose Euler characteristic is also a power of $7$. Note that the maximal dimension of an irreducible $\mathbb{F}_7T$-module is $7$. Hence, there are at least $\lceil 9224/7 \rceil =1318$ normal subgroups between $N_5$ and $N$. This shows the complexity of the normal subgroup ${\rm O}_p(X)$. 
\end{example}

\subsection{\texorpdfstring{Bi-rotary maps with $p$-groups}{Bi-rotary maps with p-groups}}
If $\calm=\Map(X, x, y)$ is a bi-rotary map and $X/\O_p(X)=1$, then $p=2$ as $y\in \O_p(X)$ is an involution. Hence $X$ is a $2$-group. Now, suppose that $\calm$ is a bi-rotary map of Euler characteristic that is a negative power of two.
Direct computation shows that the type of $\calm$ must be in  $\{ (4,8),(8,4),(8,8) \}$.
In the following, we construct infinitely many bi-rotary maps whose automorphism groups are $2$-groups and Euler characteristic is a power of $2$ for each type.
First, we provide small examples in Example~\ref{example 16} below using groups of order $16$.

\begin{example}\label{example 16}
     Let $X=\l x,y| x^8, y^2, x^yx^5\r=\l x\r{:}\l y\r$ be the semi-dihedral group of order $16$. Then the bi-rotary map $\Map(X,x,y)$ is of type $(8,8)$ and Euler characteristic $\chi=-4$. The bi-rotary map $\Map(X,xy,y)$ is of type $(4,8)$ and Euler characteristic $\chi=-2$. 
    Let $X=\l x,y| x^8, y^2, x^yx^3\r=\l x\r{:}\l y\r$ be a $2$-group of order $16$. Then the bi-rotary map $\Map(X,x,y)$ is of type $(8,4)$ and Euler characteristic $\chi=-2$. 
\qed \end{example}

In the following example, we construct infinitely many bi-rotary maps of Euler characteristic 0, which is a key step in generating such maps for negative Euler characteristic. These examples are adapted from
\cite{bredadazevedoRegularPseudoorientedMaps2015}. 

\begin{example}\label{example 0}
 For any positive integer $f$ and $\varepsilon\in \{0,1\}$, let $U=\Z_{2^{f+\varepsilon}}{\times}\Z_{2^f}=\l u\r{\times}\l v\r$. Set $X=(U{:}\Z_4){:}\Z_2=(U{:}\l x\r){:}\l y\r$ where
 \[u^x=uv^{-1},\ v^x=u^2v^{-1},\ u^y=uv^{-1},\ v^y=v^{-1},\ x^y=u^{-1}vx^{-1}=x^{-1}u^{-1}.\]
 It follows that $X=\l x ,y\r$,  $|X|=2^{2f+3+\varepsilon}$, $|x|=4$, $|y|=2$ and $|[x,y]|=|x^2u^{-1}|=2$. Hence, the bi-rotary map $\Map(X,x,y)$ is of Euler characteristic $0$. This gives infinitely many bi-rotary maps on the torus. 
\qed \end{example}

Now, we are ready to give infinitely many bi-rotary maps of Euler Characteristic $\chi=-2^{f'}$. 

\begin{example}\label{example 2group}
Let $\calm_1=\Map(X_1,x_1,y_1)$ be a bi-rotary map given in Example~\ref{example 16}. That means $|X_1|=16$, and $\calm_1$ is of type $(k,m)\in\{ (4,8),(8,4),(8,8)\}$.
 For any integer $f\ge 5$, let  $\calm_2=\Map(X_2,x_2,y_2)$ be a bi-rotary map of type $(4,4)$ with $|X_2|=2^f$ defined in Example~\ref{example 0}.
 Let $X_0=X_1\times X_2$ and let $x=(x_1,x_2)$, $y=(y_1,y_2)\in X_0$. 
 Now set $X=\l x,y\r$ and let $\calm=\Map(X, x,y)$. It is easy to see that $|x|=\lcm(|x_1,y_2|)=k$ and $|[x,y]|=\lcm(|[x_1,y_1]|,|[x_2,y_2]|)=m/2$. Moreover, as $X\leq X_1{\times}X_2$ has a homomorphic image $X_2$, $2^f\leq|X|\leq 2^{f+4}$. Hence, the bi-rotary map $\calm$ is of type $(k,m)$ and of Euler characteristic $\chi=-2^{f'}$ for some integer $f-3\leq f'\leq f+2$.  This gives infinitely many bi-rotary maps of type $(k,m)$ whose automorphism group is a $2$-group. 
\qed \end{example}

\subsection{Bi-rotary maps with abelian automorphisms}\label{sec:abelian}

In this subsection, we discuss the quotient maps where $X/\O_p(X)$ is a non-trivial abelian group, or equivalently,  bi-rotary maps with abelian automorphism groups. 
  We first give two families of graphs.
  A graph with only one vertex and $n$ loops is called a \emph{bouquet} with $n$ edges and denoted by ${\sf B}_n$.
  A graph with two vertices and $n$ multiple edges joining them is called a \emph{dipole graph} with $n$ edges and denoted by ${\sf D}_n$. The following two families of maps are bi-rotary maps with abelian automorphism groups whose underlying graphs are ${\sf B}_n$ and ${\sf D}_n$, respectively.

  \begin{example}
   Let $n$ be a positive integer and let $X=\l x\r\cong \Z_{2n}$. Set $y=x^n$. Then 
    $\Map(X,x,y)$ has one vertex, $n$ edges and $n$ faces. Thus, it
     is an embedding of ${\sf B}_n$ on the projective plane. Denote this map by ${\mathcal B}_n$. See Figure \ref{fig: dipole on sphere} for an example with $n=3$. 
  \qed \end{example}
  \begin{example}
   Let $n$ be a positive integer and let $X=\l x\r{\times} \l y\r\cong \Z_n{\times}\Z_2$. Then 
    $\Map(X,x,y)$ has two vertices, $n$ edges and $n$ face. Thus, it
     is an embedding of ${\sf D}_n$ on the sphere. Denote this map by ${\mathcal D}_n$. See Figure \ref{fig: dipole on sphere} for an example with $n=6$. 
  \qed \end{example}
 
 The following easy proposition shows that any bi-rotary map with an abelian automorphism group belongs to one of the previous two families. 

 \begin{proposition}
  Let $\calm$ be a bi-rotary map with $n$ edges and an abelian automorphism group. Then either $\calm\cong {\mathcal B}_n$ or $\calm \cong {\mathcal D}_n$.
 \end{proposition}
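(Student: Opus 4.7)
The plan is straightforward: since $\calM=\Map(X,x,y)$ is bi-rotary with $X$ abelian, $|y|=2$, and $X=\langle x,y\rangle$, the number of edges is $n=|X|/|X_e|=|X|/2$, so $|X|=2n$. I would split into two cases depending on whether $y\in\langle x\rangle$ or not, and in each case identify $(X,x,y)$ with the group-generator triple defining $\mathcal{B}_n$ or $\mathcal{D}_n$, then apply Lemma~\ref{lem:fundamental}(iii) to conclude map isomorphism.

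First, suppose $y\in\langle x\rangle$. Then $X=\langle x\rangle$ is cyclic of order $2n$, and since $y$ is the unique involution of $\langle x\rangle$ we must have $y=x^{n}$. This matches exactly the definition of $\mathcal{B}_n$, so the identity map on $X$ gives a group isomorphism carrying the rotary pair of $\calM$ to the rotary pair of $\mathcal{B}_n$, and Lemma~\ref{lem:fundamental}(iii) yields $\calM\cong\mathcal{B}_n$.

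Second, suppose $y\notin\langle x\rangle$. Because $X$ is abelian and $|y|=2$, the cyclic subgroup $\langle y\rangle$ has trivial intersection with $\langle x\rangle$ (otherwise $y$, being the unique involution of $\langle y\rangle$, would lie in $\langle x\rangle$). Therefore $X=\langle x\rangle\times\langle y\rangle\cong\Z_{|x|}\times\Z_2$; comparing orders gives $|x|=n$. This is precisely the group-generator data defining $\mathcal{D}_n$, and Lemma~\ref{lem:fundamental}(iii) again gives $\calM\cong\mathcal{D}_n$.

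There is no real obstacle here: the proof is essentially a dichotomy on whether the involution $y$ lies in the cyclic vertex-stabilizer, and both cases are determined up to isomorphism by the abstract group together with the ordered pair $(x,y)$. The only point that deserves a line of care is verifying $\langle x\rangle\cap\langle y\rangle=1$ in the second case, which follows from $|y|=2$ and $y\notin\langle x\rangle$ as above.
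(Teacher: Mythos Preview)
Your proof is correct and follows essentially the same approach as the paper: split according to whether $y\in\langle x\rangle$, identify $X$ as $\langle x\rangle$ or $\langle x\rangle\times\langle y\rangle$, and match with $\mathcal{B}_n$ or $\mathcal{D}_n$. You supply more detail than the paper (the edge count $|X|=2n$, the uniqueness of the involution, the triviality of $\langle x\rangle\cap\langle y\rangle$, and the appeal to Lemma~\ref{lem:fundamental}(iii)), but the underlying argument is identical.
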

  \begin{proof}
      Suppose that $\calm=\Map(X,x,y)$ where $X$ is abelian. Note that $X=\l x,y\r$. Hence, either $X=\l x\r$ or $X=\l x\r{\times} \l y\r$. If the former case holds, then clearly $\calm\cong {\mathcal B}_n$. If the latter case holds, then $\calm \cong {\mathcal D}_n$.
     \end{proof}   

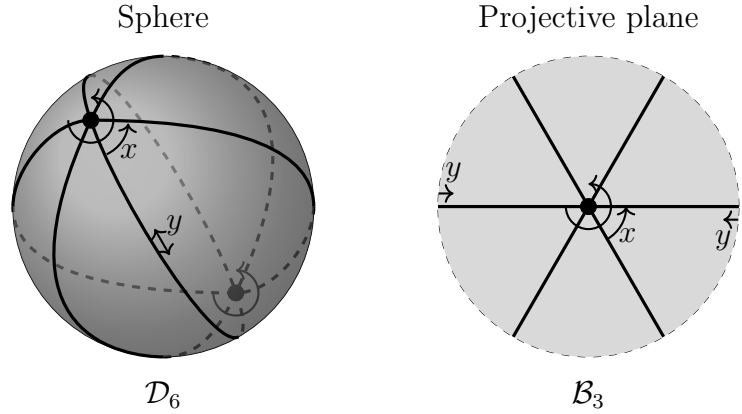
\begin{figure}[ht]
    \centering
     \begin{tikzpicture} 
      \draw[thin] (0,0) circle (2cm);
      \draw[very thick,dashed] (120:2cm) ..controls ($(120:2cm)-(210:0.5)$) and ($(-60:2cm)-(210:.5)$) ..(-60:2cm);
      \filldraw[fill=black] (-50:1.5) circle (3pt);
      \draw[thick,->] ($(-50:1.5)+(-0.3,0)$) arc (180:450:0.3cm);
      \draw[very thick,dashed]  (-90:2cm) ..controls +(180:-0.5) and +(65:-0.5) ..(-50:1.5cm) .. controls +(65:3.2) and +(180:-0.5) .. (90:2);
      \draw[very thick,dashed] (0:2cm) ..controls +(90:-0.5) and +(180:-0.5) ..(-50:1.5cm) .. controls +(0:-2.5) and +(-90:0.5) .. (180:2);
      \fill[ball color=gray!10,opacity=0.5] (0,0) circle (2cm);
      \draw[very thick] (120:2cm) ..controls +(210:0.5) and +(210:.5) ..(-60:2cm); 
      \draw[thick,<->] ($(120:-0.2)+(210:.25)$)--($(-60:.6)+(210:.25)$);
      \node[anchor= south west] at ($(120:-0.4)+(210:.35)$) {$y$};
      \filldraw[fill=black] (130:1.5) circle (3pt);
      \draw[ thick,->] ($(130:1.5)+(-0.3,0)$) arc (180:450:0.3cm);
      \draw[ thick,->] ($(130:1.5)+(-65:.5)$) arc (-65:-5:0.5cm);
      \node[anchor= north west] at ($(130:1.5)+(-35:.3)$) {$x$};
      \draw[very thick] (90:2cm) ..controls +(180:0.5) and +(65:0.5) ..(130:1.5cm) .. controls +(65:-3.2) and +(180:0.5) .. (-90:2);
      \draw[very thick] (180:2cm) ..controls +(90:0.5) and +(180:0.5) ..(130:1.5cm) .. controls +(180:-2.5) and +(90:0.5) .. (0:2);
      \node at (0,-2.5cm) { ${\mathcal D}_6$};
      \node at (0,2.5cm) { Sphere};
    \end{tikzpicture}
\hspace*{1cm}
         \begin{tikzpicture} 
      \draw[dashed] (0,0) circle (2cm);
      \fill[color=gray!30] (0,0) circle (2cm);
      \foreach \i in {0,1,2} {
      \draw[very thick] ($(0+\i*60:2)$)--($(180+\i*60:2)$); 
      };
      \filldraw[fill=black] (0,0) circle (3pt);
      \draw[thick,->] (180:.3cm) arc (180:450:0.3cm);
      \draw[thick,->] ($(0,0)+(-60:.5)$) arc (-60:0:0.5cm);
      \node[anchor= north west] at (-30:.3) {$x$};
      \draw[thick,->] (175:2)--($(175:2)+(0.2,0)$);
      \draw[thick,->] (-5:2)--($(-5:2)-(0.2,0)$);
      \node[anchor= south] at ($(175:2)+(.2,0)$) {$y$};
      \node[anchor= north] at ($(-5:2)-(.2,0)$) {$y$};
      \node at (0,-2.5cm) { ${\mathcal B}_3$};
      \node at (0,2.5cm) { Projective plane};
    \end{tikzpicture}
    \caption{Two bi-rotary maps with abelian automorphism groups}
    \label{fig: dipole on sphere}
\end{figure}

Although these maps have a positive Euler characteristic, they naturally emerge during the quotient map process. 
There is also a degenerate case that arises during quotient processes, specifically when \( y = 1 \). Note that in this case $X=\l x,y\r=\l x\r\cong \Z_n$ can be viewed as a quotient of the group $\Z_n\times \Z_2$. Hence, the degenerate map $\Map(X,x,y)$ can thus 
be viewed as gluing the northern and southern hemispheres of $\mathcal{D}_{n}$
along the equatorial plane to form a single disk, as illustrated in Figure~\ref{fig: semistar}. This is an embedding of a semistar with $n$ semi-edges on a closed disk.

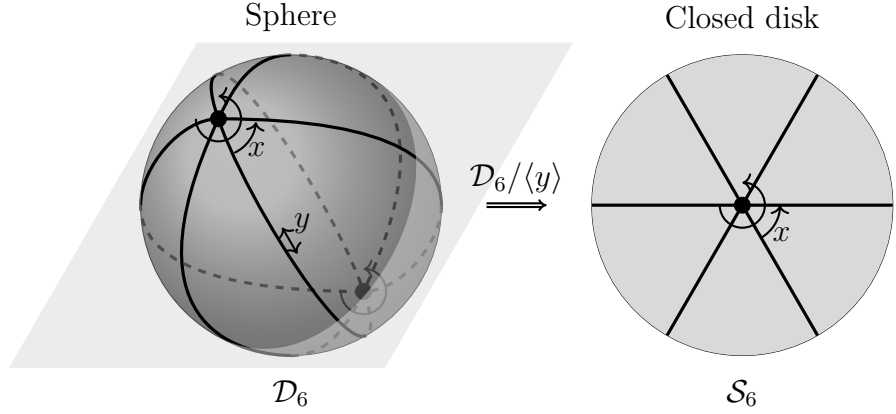
\begin{figure}[ht]
    \centering
     \begin{tikzpicture} 
      \draw[thin] (0,0) circle (2cm);
      \draw[very thick,dashed] (120:2cm) ..controls ($(120:2cm)-(210:0.5)$) and ($(-60:2cm)-(210:.5)$) ..(-60:2cm);
      \filldraw[fill=black] (-50:1.5) circle (3pt);
      \draw[thick,->] ($(-50:1.5)+(-0.3,0)$) arc (180:450:0.3cm);
      \draw[very thick,dashed]  (-90:2cm) ..controls +(180:-0.5) and +(65:-0.5) ..(-50:1.5cm) .. controls +(65:3.2) and +(180:-0.5) .. (90:2);
      \draw[very thick,dashed] (0:2cm) ..controls +(90:-0.5) and +(180:-0.5) ..(-50:1.5cm) .. controls +(0:-2.5) and +(-90:0.5) .. (180:2);
      \fill[ball color=gray!10,opacity=0.5] (0,0) circle (2cm);
      \draw[very thick] (120:2cm) ..controls +(210:0.5) and +(210:.5) ..(-60:2cm); 
      \draw[thick,<->] ($(120:-0.2)+(210:.25)$)--($(-60:.6)+(210:.25)$);
      \node[anchor= south west] at ($(120:-0.4)+(210:.35)$) {$y$};
      \filldraw[fill=black] (130:1.5) circle (3pt);
      \draw[ thick,->] ($(130:1.5)+(-0.3,0)$) arc (180:450:0.3cm);
      \draw[ thick,->] ($(130:1.5)+(-65:.5)$) arc (-65:-5:0.5cm);
      \node[anchor= north west] at ($(130:1.5)+(-35:.3)$) {$x$};
      \draw[very thick] (90:2cm) ..controls +(180:0.5) and +(65:0.5) ..(130:1.5cm) .. controls +(65:-3.2) and +(180:0.5) .. (-90:2);
      \draw[very thick] (180:2cm) ..controls +(90:0.5) and +(180:0.5) ..(130:1.5cm) .. controls +(180:-2.5) and +(90:0.5) .. (0:2);
      \fill[color=gray!30,opacity=0.5] (60:-2).. controls +(-30:2) and +(-30:2) ..(60: 2) arc (60:180:2)-- (-2.5,0)--++(60:2.5)--++(5,0)--++(-120:5)--++(-5,0)--(-2.5,0)--(-2,0) arc (-180:-120:2) --cycle;
      \node at (0,-2.5cm) { ${\mathcal D}_6$};
      \node at (0,2.5cm) { Sphere};

      \draw (6,0) circle (2cm);
      \fill[color=gray!30] (6,0) circle (2cm);
      \foreach \i in {0,1,2} {
      \draw[very thick] ($(\i*60:2)+(6,0)$)--($(180+\i*60:2)+(6,0)$); 
      };
      \filldraw[fill=black] (6,0) circle (3pt);
      \draw[thick,->] ($(180:.3cm)+(6,0)$) arc (180:450:0.3cm);
      \draw[thick,->] ($(6,0)+(-60:.5)$) arc (-60:0:0.5cm);
      \node[anchor= north west] at ($(-30:.3)+(6,0)$) {$x$};
      \draw[double,thick,-{Implies[]}] (2.6,0)--(3.4,0);
      \node[anchor=south] at (3,0) {${\mathcal D}_6/\l y\r$};
      \node at (6,-2.5cm) { ${\mathcal S}_6$};
      \node at (6,2.5cm) { Closed disk};
    \end{tikzpicture}

    \caption{Getting a degenerate map ${\mathcal S}_6$ by taking quotient of ${\mathcal D}_6$}
    \label{fig: semistar}
\end{figure}


\section{Basic properties of bi-rotary maps with negative prime power Euler characteristic}\label{sec:property}

As seen in the previous section, the automorphism group of the bi-rotary map determines its fundamental properties. Therefore, bi-rotary maps with Euler characteristic $-p^n$ can be entirely studied within the framework of group theory. To apply techniques from quotient and subgroup structure analysis in group theory, we define the following related group classes.

\begin{definition}
 Let $G$ be a finite group and let $p$ be a prime.
 \begin{enumerate}
 \item The group $G$ is called a $\mathscr{P}_0(p)$-group with respect to a pair $(\rho,\tau)\in G{\times}G$ if  $|\tau|=2$, $\l \rho,\tau\r=G$, and
 \[|G|\left(\frac{1}{|\rho|}+\frac{1}{|\l \tau, \tau^\rho\r|}-\frac{1}{|\tau|}\right)=-p^n, \]
 for some positive integer $n$.
 \item The group $G$ is called a  $\mathscr{P}_1(p)$-group if there exist two subgroups $H_1$ and $H_2$ of $G$ which are cyclic or dihedral such that 
 \[|G|=p^n{\cdot}\lcm(|H_1|, |H_2|),\]
 for some non-negative integer $n$. To be more specific, $G$ is called a $\mathscr{P}_1(p)$-group with respect to $H_1$ and $H_2$.
 \item The group $G$ is called a $\mathscr{P}_1^+(p)$-group with respect to a pair $(\rho,\tau)\in G{\times}G$ if $|\tau|\le 2$, $\l \rho,\tau\r=G$, $\l [\rho,\tau],\rho\r=G$ if $p$ is odd, and $G$ is a $\mathscr{P}_1(p)$-group with respect to $\l \tau,\tau^\rho\r$ and $\l\rho\r$.
 \item The group $G$ is called a $\mathscr{P}_2(p)$-group if for each prime $r\neq p$, the Sylow $r$-subgroup of $G$ is cyclic or dihedral. 
 \item The group $G$ is called a $\mathscr{P}^+_2(p)$-group with respect to a pair $(\rho,\tau)\in G{\times}G$ if $|\tau|\le 2$, $\l \rho,\tau\r=G$, $\l [\rho,\tau],\rho\r=G$ if $p$ is odd, and $G$ is a $\mathscr{P}_2(p)$-group.
 \end{enumerate}
\end{definition}

The following Lemma~is a group theory version of Lemma~\ref{lem:fundamental} {\rm (i)} which is useful in analyzing group structure. 

\begin{lemma}\label{lem:p0orient}
 If $G$ is $\mathscr{P}_0(p)$-group with respect to a pair $(\rho,\tau)$, where $p$ is an odd prime, then $\l [\rho,\tau], \rho\r=G$. 
\end{lemma}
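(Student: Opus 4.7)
The plan is to use the parity of the Euler characteristic together with Lemma~\ref{lem:fundamental}(i). Concretely, I would apply the construction associating a bi-rotary map to a rotary pair: since $G$ is a $\mathscr{P}_0(p)$-group with respect to $(\rho,\tau)$, by definition $|\tau|=2$ and $\langle\rho,\tau\rangle=G$, so we may form the bi-rotary map $\calm=\Map(G,\rho,\tau)$. The defining equation of a $\mathscr{P}_0(p)$-group then matches the Euler characteristic formula in Lemma~\ref{lem:fundamental}(ii), giving
\[
\chi(\calm) \;=\; |G|\!\left(\frac{1}{|\rho|}-\frac{1}{|\tau|}+\frac{1}{|\langle\tau,\tau^\rho\rangle|}\right) \;=\; -p^n.
\]

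The second step is the parity observation. Since $p$ is odd, $-p^n$ is an odd integer. Any orientable closed surface has Euler characteristic $2-2g$ for some non-negative integer $g$, hence even. Therefore the supporting surface of $\calm$ cannot be orientable, so $\calm$ is non-orientable.

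The third step is to invoke Lemma~\ref{lem:fundamental}(i): a bi-rotary map $\Map(G,\rho,\tau)$ is non-orientable if and only if $G=\langle[\rho,\tau],\rho\rangle$. Combining this with the previous step yields the desired conclusion $G=\langle[\rho,\tau],\rho\rangle$.

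There is essentially no obstacle here: the entire argument rests on the (standard) fact that orientable surfaces have even Euler characteristic and on the already-cited Lemma~\ref{lem:fundamental}. The only thing to verify carefully is that the hypothesis $|\tau|=2$ together with $\langle\rho,\tau\rangle=G$ really does place us in the setting of Lemma~\ref{lem:fundamental}, which is immediate from the definition of $\mathscr{P}_0(p)$-group. Note that the hypothesis that $p$ is odd is essential: for $p=2$ the Euler characteristic $-2^n$ is even and the parity argument fails, which explains why the oddness assumption cannot be dropped in the statement.
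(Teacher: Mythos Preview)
Your proposal is correct and follows essentially the same approach as the paper: form $\calm=\Map(G,\rho,\tau)$, observe its Euler characteristic is $-p^n$, use that an odd Euler characteristic forces non-orientability, and then apply Lemma~\ref{lem:fundamental}(i). The paper's proof is just a more compressed version of exactly these three steps.
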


\begin{proof}
 It follows immediately from the definition that the bi-rotary map $\Map(G, \rho,\tau)$ is of Euler characteristic $-p^n$ for some positive integer $n$. As $p$ is odd, we have $-p^n$ is odd. By the Euler-Poincar\'e formula, we have $\Map(G, \rho,\tau)$ is non-orientable and hence by Lemma~\ref{lem:fundamental} {\rm (i)}, $\l [\rho,\tau], \rho\r=G$. 
\end{proof}

By Lemma~\ref{lem:p0orient}, we have the following result, which reveals the relationship between $\mathscr{P}^+_i(p)$ for $i\in\{1,2\}$ and $\mathscr{P}_j(p)$-groups for $j\in \{0,1,2\}$.

\begin{lemma}\label{relationpi}
 Let $G$ be a finite group and let $p$ be a prime. 
 \begin{itemize}
  \item [\rm (i)] If $G$ is a $\mathscr{P}_0(p)$-group with respect to $(\rho,\tau)$, then it is a $\mathscr{P}_1^+(p)$-group with respect to $(\rho,\tau)$.
  \item [\rm (ii)] If $G$ is a $\mathscr{P}_1(p)$-group, then it is a $\mathscr{P}_2(p)$-group.
  \item [\rm (iii)] If $G$ is a $\mathscr{P}_1^+(p)$-group with respect to $(\rho,\tau)$, then it is also a $\mathscr{P}^+_2(p)$-group with respect to $(\rho,\tau)$.
 \end{itemize}
\end{lemma}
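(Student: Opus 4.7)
My plan is to verify each implication by directly unpacking the definitions; the only non-formal ingredient is a short parity/divisibility check that arises in part (i).

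For (i), three of the four clauses of $\mathscr{P}_1^+(p)$ are immediate: the $\mathscr{P}_0(p)$ hypothesis gives $|\tau|=2$ and $\langle\rho,\tau\rangle = G$ verbatim, and Lemma~\ref{lem:p0orient} supplies $\langle[\rho,\tau],\rho\rangle = G$ when $p$ is odd. The substantive task is to show that $G$ is $\mathscr{P}_1(p)$ with respect to $H_1 := \langle\tau,\tau^\rho\rangle$ and $H_2 := \langle\rho\rangle$. Here $H_2$ is cyclic, and $H_1$, being generated by the two involutions $\tau$ and $\tau^\rho$, is dihedral (allowing the degenerate cases of order $1$ or $2$), so only the order identity $|G| = p^N \operatorname{lcm}(|H_1|,|H_2|)$ requires work.

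I would derive this identity as follows. Set $k=|\rho|$, $m=|H_1|$, $L=\operatorname{lcm}(k,m)$, and note $L\mid |G|$ since both $\langle\rho\rangle$ and $H_1$ are subgroups of $G$. Clearing denominators in the Euler-characteristic formula gives $|G|(km-2k-2m) = 2km\,p^n$; using $L = km/\gcd(k,m)$ and writing $d=\gcd(k,m)$, $k=dk'$, $m=dm'$ with $\gcd(k',m')=1$, this rearranges to
\[\frac{|G|}{L} \;=\; \frac{2\,p^n}{dk'm'-2k'-2m'}.\]
Since the left side is a positive integer, $dk'm'-2k'-2m'$ divides $2p^n$. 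For $p=2$ every divisor of $2^{n+1}$ is a power of $2$, so we are done. For odd $p$, the divisors of $2p^n$ have the form $p^a$ or $2p^a$, and only the latter makes the quotient a pure $p$-power; so the crux is the parity of $dk'm'-2k'-2m'$. I would resolve this using $|\tau|=2$: since $\tau \in H_1$, we have $2\mid m = dm'$, and $\gcd(k',m')=1$ forces $2\mid dk'm'$, whence $dk'm'-2k'-2m'$ is even. This parity point is the main (though modest) obstacle in the whole lemma.

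For (ii) the plan is a one-line Sylow argument: for any prime $r\ne p$, the factorisation $|G| = p^n \operatorname{lcm}(|H_1|,|H_2|)$ shows that the $r$-part of $|G|$ equals that of $|H_i|$ for some $i\in\{1,2\}$, so a Sylow $r$-subgroup of $H_i$ is already a Sylow $r$-subgroup of $G$; Sylow subgroups of cyclic or dihedral groups are themselves cyclic or dihedral, which is precisely what is required. Finally, (iii) is immediate: the extra constraints on $(\rho,\tau)$ appearing in $\mathscr{P}_1^+(p)$ and $\mathscr{P}_2^+(p)$ are identical, so only the implication $\mathscr{P}_1(p)\Rightarrow\mathscr{P}_2(p)$ from (ii) needs to be invoked.
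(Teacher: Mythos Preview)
Your proposal is correct. Parts (ii) and (iii) match the paper's proof essentially verbatim. For part (i), the paper takes a somewhat slicker route that avoids your case split on the parity of $p$: rather than clearing denominators, it uses the identity
\[
\frac{|G|}{\operatorname{lcm}(|H_1|,|H_2|)}=\gcd\!\left(\frac{|G|}{|H_1|},\frac{|G|}{|H_2|}\right)
=\gcd\!\left(\frac{|G|}{|H_1|},\frac{|G|}{|H_2|},\frac{|G|}{2}\right),
\]
the last equality holding because $2=|\tau|$ divides $|\langle\tau,\tau^\rho\rangle|$. Any prime $r$ dividing this gcd then divides the integer linear combination $\frac{|G|}{|\rho|}+\frac{|G|}{|\langle\tau,\tau^\rho\rangle|}-\frac{|G|}{2}=-p^n$, forcing $r=p$. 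This is the same parity observation you isolate (namely that $2\mid m$), but packaged so that no explicit lcm computation or divisor analysis of $2p^n$ is needed. Your more computational route is equally valid and has the virtue of making the quotient $|G|/L$ completely explicit; the mention of $\gcd(k',m')=1$ in your parity step is harmless but actually unnecessary, since $2\mid dm'$ already gives $2\mid dk'm'$ directly.
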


\begin{proof}
  Suppose $G$ is a $\mathscr{P}_0(p)$-group with respect to $(\rho,\tau)$.
That is $|\tau|=2$, $\l\rho,\tau\r=G$, and
\[|G|\left(\frac{1}{|\rho|}+\frac{1}{|\l \tau, \tau^\rho\r|}-\frac{1}{|\tau|}\right)=\left(\frac{|G|}{|\rho|}+\frac{|G|}{|\l \tau, \tau^\rho\r|}-\frac{|G|}{|\tau|}\right)=-p^n, \]
for some positive integer $n$. 
By Lemma~\ref{lem:p0orient}, $\l[\rho,\tau],\rho\r=G$ if $p$ is odd.
Set $H_1=\l\rho\r$ and $H_2=\l \tau, \tau^\rho\r$. 
It is sufficient to show that  \[|G|=p^{n'}{\cdot}\lcm(|H_1|, |H_2|),\]
 for some non-negative integer $n'$. That is, we need to show $p$ is the only prime divisor of $|G|/\lcm(|H_1|, |H_2|)$.
   Let $r$ be a prime divisor of 
   \[\frac{|G|}{\lcm(|H_1|, |H_2|)}=\gcd\left(\frac{|G|}{|H_1|}, \frac{|G|}{|H_2|}\right).\]
   Since $2=|\tau|$ is a divisor of  $|H_2|$, we have 
\[\gcd\left(\frac{|G|}{|H_1|},\frac{|G|}{|H_2|}\right)=\gcd\left(\frac{|G|}{|H_1|},\frac{|G|}{|H_2|},\frac{|G|}{2}\right).\]
   Hence, $r$ must be a prime divisor of $-p^n$, which implies that $r=p$, and $G$ is a $\mathscr{P}_1(p)$-group with respect to $H_1$ and $H_2$.

  Suppose $G$ is a $\mathscr{P}_1(p)$-group. There are two subgroups $H_1, H_2$ of  $G$ which are cyclic or dihedral such that 
\[\frac{|G|}{\lcm(|H_1|,|H_2|)}=\gcd\left(\frac{|G|}{|H_1|}, \frac{|G|}{|H_2|}\right)=p^n,\]
  for some non-negative integer $n$. 
   Let $r$ be a prime divisor of $|G|$ other than $p$. Then either $|G|/|H_1|$ or $|G|/|H_2|$ is not divisible by $r$. Hence either $H_1$ or $H_2$ contains a Sylow $r$-subgroup of $G$. As $H_1$ and $H_2$ are cyclic or dihedral, we have that all Sylow $r$-subgroups of $G$ are cyclic or dihedral, which means $G$ is a $\P{2}{p}$-group.

   Suppose $G$ is a $\mathscr{P}_1^+(p)$-group with respect to $(\rho,\tau)$. By {\rm (ii)} and the definition of $\mathscr{P}_2^+(p)$-groups, $G$ is a $\mathscr{P}_2^+(p)$-group with respect to $(\rho,\tau)$.
\end{proof}

Let $\calm=\Map(G,\rho,\tau)$ be a bi-rotary map. 
As shown in Subsection~\ref{subsect:map}, $\calm$ has an Euler characteristic  $-p^n$ for some prime $p$ if and only if $G$ is a $\mathscr{P}_0(p)$-group with respect to the pair $(\rho,\tau)$. 
Hence, studying automorphism groups of bi-rotary maps with negative prime power Euler characteristic is equivalent to analyzing $\mathscr{P}_0(p)$-groups for some prime $p$.
It is easy to see that a subgroup or a quotient group of a $\P{0}{p}$-group is not necessarily a $\P{0}{p}$-group, but the following lemmas show that being an $\P{1}{p}$-group and being an $\P{2}{p}$-group are inherited in some sense.

\begin{lemma}\label{lem:p1}
Let $G$ be a $\mathscr{P}_1(p)$-group with respect to $H_1$ and $H_2$. 
If $N$ is a normal subgroup of $G$, then
\begin{itemize}
    \item [\rm (i)] $N$ is a $\mathscr{P}_1(p)$-group with respect to $H_1\cap N$ and $H_2\cap N$;
    \item [\rm (ii)] $G/N$ is a $\mathscr{P}_1(p)$-group with respect to $H_1N/N$ and $H_2N/N$.
\end{itemize}
In particular, the class of $\mathscr{P}_1(p)$-groups is closed under normal subgroups and quotients.
\end{lemma}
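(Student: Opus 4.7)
The plan is to reduce the statement to a purely numerical condition on Sylow subgroups and then use the basic fact that, for a normal subgroup $N\lhd G$ and any Sylow $r$-subgroup $R$ of $G$, both $R\cap N$ and $RN/N$ are Sylow $r$-subgroups of $N$ and $G/N$ respectively. The key reformulation is the following: the condition $|G|=p^n\cdot\lcm(|H_1|,|H_2|)$ is equivalent to saying that for every prime $r\ne p$, the Sylow $r$-subgroup of $G$ has order $\max\bigl(|H_1|_r,|H_2|_r\bigr)$, where $|\cdot|_r$ denotes the $r$-part. Equivalently, for every prime $r\ne p$, some Sylow $r$-subgroup of $G$ is contained in $H_1$ or in $H_2$.

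For part (i), first observe that $H_1\cap N$ and $H_2\cap N$ are cyclic or dihedral, since subgroups of cyclic and dihedral groups are themselves cyclic or dihedral. Next, fix a prime $r\ne p$ and choose, as in the reformulation above, a Sylow $r$-subgroup $R$ of $G$ contained in some $H_i$. Since $N\lhd G$, the intersection $R\cap N$ is a Sylow $r$-subgroup of $N$, and it lies in $H_i\cap N$. This yields
\[|N|_r=|R\cap N|\le|H_i\cap N|_r\le\max\bigl(|H_1\cap N|_r,|H_2\cap N|_r\bigr),\]
while the reverse inequality is trivial. Thus $|N|_r=\max(|H_1\cap N|_r,|H_2\cap N|_r)$ for every prime $r\ne p$, which is exactly the condition that $|N|/\lcm(|H_1\cap N|,|H_2\cap N|)$ is a power of $p$.

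For part (ii), the quotients $H_iN/N\cong H_i/(H_i\cap N)$ are cyclic or dihedral as homomorphic images of cyclic or dihedral groups. For each prime $r\ne p$, again pick a Sylow $r$-subgroup $R$ of $G$ contained in some $H_i$; then $RN/N$ is a Sylow $r$-subgroup of $G/N$ contained in $H_iN/N$, giving
\[|G/N|_r\le|H_iN/N|_r\le\max\bigl(|H_1N/N|_r,|H_2N/N|_r\bigr),\]
and, as before, the reverse inequality is automatic. This establishes the numerical condition for $G/N$.

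The argument is largely routine once the Sylow containment reformulation is in hand; the only mildly delicate point is verifying that containment of a Sylow $r$-subgroup in $H_1$ or $H_2$ is preserved under intersection with, and projection to, the normal subgroup $N$. I expect no serious obstacle: the whole proof rests on the standard fact that Sylow subgroups behave well with respect to normal subgroups and quotients, together with the stability of the classes of cyclic and dihedral groups under subgroups and quotients.
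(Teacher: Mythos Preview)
Your proof is correct, but it follows a different route from the paper's. The paper argues purely with index arithmetic: since $N\lhd G$, both $[G:H_iN]$ and $[H_iN:H_i]=[N:H_i\cap N]$ divide $[G:H_i]$, so
\[
\gcd\!\left(\frac{|N|}{|H_1\cap N|},\frac{|N|}{|H_2\cap N|}\right)
\quad\text{and}\quad
\gcd\!\left(\frac{|G/N|}{|H_1N/N|},\frac{|G/N|}{|H_2N/N|}\right)
\]
each divide $\gcd([G:H_1],[G:H_2])=p^n$. You instead reformulate the $\mathscr{P}_1(p)$ condition as ``for every prime $r\ne p$, some Sylow $r$-subgroup of $G$ lies in $H_1$ or $H_2$'' and then invoke the standard fact that Sylow subgroups pass to normal subgroups and quotients via intersection and projection. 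Your approach is slightly longer but more conceptual: it makes transparent \emph{why} the property descends, and it directly connects $\mathscr{P}_1(p)$ to $\mathscr{P}_2(p)$ (Lemma~\ref{relationpi}(ii) is essentially built into your reformulation). The paper's argument, by contrast, is a two-line divisibility computation that avoids Sylow theory entirely. Both are elementary and either would be acceptable here.
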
 

\begin{proof}
 By definition, we have
\[\frac{|G|}{\lcm(|H_1|,|H_2|)}=\gcd\left(\frac{|G|}{|H_1|}, \frac{|G|}{|H_2|}\right)=p^n,\] for some non-negative integer $n$. 
Set $\overline{G}=G/N$, $\overline{H}_i=H_iN/N$, and $L_i=H_i\cap N$ for $i\in \{1,2\}$. 
As $N$ is a normal subgroup of $G$, $H_iN$ is a subgroup of $G$ and both $|H_iN{:} H_i|$, $|G{:}H_iN|$ are divisors of $|G{:}H_i|$, where $i\in \{1,2\}$. It follows that  both 
\[\gcd\left(\frac{|H_1N|}{|H_1|},\frac{|H_2N|}{|H_2|}\right)=\gcd\left(\frac{|N|}{|L_1|},\frac{|N|}{|L_2|}\right)\]
and \[  \gcd\left(\frac{|G|}{|H_1N|},\frac{|G|}{|H_2N|}\right)=\gcd\left(\frac{|\overline{G}|}{|\overline{H}_1|},\frac{|\overline{G}|}{|\overline{H}_2|}\right)\]
are divisor of $p^n$, which is also a power of $p$. It is obvious that $\overline{H}_1,\overline{H}_2,L_1,L_2$ are either cyclic or dihedral. This completes the proof. 
\end{proof}

\begin{corollary}\label{cor:p1+}
    Let $G$ be a $\mathscr{P}_1^+(p)$-group with respect to $(\rho,\tau)$ and let $N$ be a normal subgroup of $G$.
    Then $G/N$ is a $\mathscr{P}_1^+(p)$-group with respect to $(\rho N,\tau N)$.
\end{corollary}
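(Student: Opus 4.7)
The plan is to verify directly each defining condition of $\mathscr{P}_1^+(p)$ for the pair $(\rho N,\tau N)$ in $G/N$, leveraging the corresponding properties already satisfied by $(\rho,\tau)$ in $G$, and then invoking Lemma~\ref{lem:p1}(ii) for the ``$\mathscr{P}_1(p)$-part'' of the definition. Since all four conditions in the definition of $\mathscr{P}_1^+(p)$ are visibly preserved by quotient homomorphisms, the proof is essentially a bookkeeping exercise once the subgroups $H_1=\l\rho\r$ and $H_2=\l\tau,\tau^\rho\r$ are identified with the correct images in $G/N$.

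First I would check the easy conditions. The image $\tau N$ has order at most $2$ because $|\tau|\le 2$; the equality $\l\rho N,\tau N\r=G/N$ follows at once from $\l\rho,\tau\r=G$ by applying the natural projection $G\to G/N$; and when $p$ is odd, the hypothesis $\l[\rho,\tau],\rho\r=G$ projects to $\l[\rho N,\tau N],\rho N\r=G/N$ since $[\rho,\tau]N=[\rho N,\tau N]$.

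For the remaining condition---that $G/N$ is a $\mathscr{P}_1(p)$-group with respect to $\l\tau N,(\tau N)^{\rho N}\r$ and $\l\rho N\r$---the point is simply that these two subgroups coincide with $H_2N/N$ and $H_1N/N$ respectively, where $H_1=\l\rho\r$ and $H_2=\l\tau,\tau^\rho\r$ are the witnesses making $G$ a $\mathscr{P}_1(p)$-group. Indeed $\l\rho N\r=\l\rho\r N/N$, and $\l\tau N,(\tau N)^{\rho N}\r=\l\tau,\tau^\rho\r N/N$ since commutation with $\rho$ is compatible with the projection. Lemma~\ref{lem:p1}(ii) then supplies the conclusion.

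There is no genuine obstacle here; if anything, the only subtlety is to notice explicitly that the subgroups chosen to witness $\mathscr{P}_1^+(p)$ for the quotient are precisely the images of the witnesses used for $G$, so that Lemma~\ref{lem:p1}(ii) applies without modification. No new combinatorial or numerical arguments are needed beyond those already established.
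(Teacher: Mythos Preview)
Your proposal is correct and follows exactly the same approach as the paper: invoke Lemma~\ref{lem:p1}(ii) to obtain the $\mathscr{P}_1(p)$ condition for the images $H_1N/N=\l\rho N\r$ and $H_2N/N=\l\tau N,\tau^\rho N\r$, and observe that the remaining defining conditions of $\mathscr{P}_1^+(p)$ pass to the quotient trivially. The paper's proof is simply a terser version of what you wrote.
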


\begin{proof}
    By Lemma~\ref{lem:p1} {\rm (ii)},
    $G/N$ is a $\mathscr{P}_1(p)$-group with respect to $\l \tau N,\tau^\rho N\r$ and $\l \rho N\r$.
    Then by the definition of $\mathscr{P}^+_1(p)$-groups,
    $G/N$ is a $\mathscr{P}_1^+(p)$-group with respect to $(\rho N,\tau N)$.
\end{proof}

  Recall that a section of a group $G$ is a quotient group of a subgroup of $G$. 

\begin{lemma}\label{lem:p2}
 Let $G$ be a $\mathscr{P}_2(p)$-group. Then any section $H$ of $G$ is also a $\mathscr{P}_2(p)$-group.
\end{lemma}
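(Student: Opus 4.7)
The plan is to establish two closure properties of the class of $\mathscr{P}_2(p)$-groups---closure under taking subgroups and closure under taking quotients---and then to combine them, since by definition every section of $G$ is obtained as a quotient of a subgroup of $G$.

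First I would verify closure under subgroups. Taking any $K \le G$ and any prime $r \neq p$, a Sylow $r$-subgroup $R$ of $K$ embeds into some Sylow $r$-subgroup $P$ of $G$, which is cyclic or dihedral by hypothesis. Invoking the standard fact that every subgroup of a cyclic group is cyclic and every subgroup of a dihedral group is either cyclic or dihedral, $R$ has the required form, so $K$ is a $\mathscr{P}_2(p)$-group.

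Next I would handle quotients. For $N \lhd G$ and a prime $r \neq p$, I would identify the Sylow $r$-subgroups of $G/N$ with the images $PN/N \cong P/(P \cap N)$, where $P$ ranges over Sylow $r$-subgroups of $G$. One sees this by choosing a Sylow $r$-subgroup $P_N$ of $N$, extending it to a Sylow $r$-subgroup $P$ of $G$, and checking that $P \cap N = P_N$ (any $r$-subgroup of $N$ containing $P_N$ must equal $P_N$), so that an order count yields $|PN/N| = |G|_r / |N|_r = |G/N|_r$. Since quotients of cyclic groups are cyclic and quotients of dihedral groups are cyclic or dihedral, every Sylow $r$-subgroup of $G/N$ is cyclic or dihedral, and $G/N$ is a $\mathscr{P}_2(p)$-group.

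Combining these, a section $H = K/N$ with $N \lhd K \le G$ is first produced as a subgroup $K$ of $G$, which is a $\mathscr{P}_2(p)$-group by the first step, and then as a quotient $K/N$ of this $\mathscr{P}_2(p)$-group, which is a $\mathscr{P}_2(p)$-group by the second step applied to $K$. I do not anticipate any serious obstacle: the argument uses only elementary Sylow theory together with the standard fact that the classes of cyclic and dihedral groups are preserved under taking subgroups and quotients.
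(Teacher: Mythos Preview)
Your proposal is correct and follows essentially the same approach as the paper: reduce to closure under subgroups and under quotients, handle each case by the elementary fact that subgroups and quotients of cyclic or dihedral groups are again cyclic or dihedral, and then compose the two closure properties to cover arbitrary sections. The paper's proof is slightly terser (it simply asserts that $S_1N/N$ is a Sylow $r$-subgroup of $G/N$ without the order computation you supply), but the argument is the same.
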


\begin{proof}
 We only need to show that $H$ and $G/N$ are $\mathscr{P}_2(p)$-groups for any subgroup $H$ and any normal subgroup $N$ of $G$.
 For any prime $r$ other than $p$, let $S_1, S_2$ be a Sylow $r$-subgroups of $G$ and $H$, respectively. 
 Then, there is an element $g\in G$ such that $S_2\le S_1^g$. 
 Hence, $S_2$ is also cyclic or dihedral.  
 Note that $S_1N/N$ is a Sylow $r$-subgroup of $G/N$. Thus, the Sylow $r$-subgroups of $G/N$ are also cyclic or dihedral.
 Since $r$ is an arbitrary prime, we have that $H$ and $G/N$ are also a $\mathscr{P}_2(p)$-groups. 
\end{proof}

\begin{corollary}
    \label{cor:p2+}
    Let $G$ be a $\mathscr{P}_2^+(p)$-group with respect to $(\rho,\tau)$
    and let $N$ be a normal subgroup of $G$.
    Then $G/N$ is a $\mathscr{P}_2^+(p)$-group with respect to $(\rho N,\tau N)$.
\end{corollary}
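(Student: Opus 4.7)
The plan is to verify, one by one, each of the four defining conditions of a $\mathscr{P}_2^+(p)$-group for the pair $(\rho N, \tau N)$ in the quotient group $\overline{G} := G/N$. Since $G$ is assumed $\mathscr{P}_2^+(p)$ with respect to $(\rho,\tau)$, we have $|\tau|\le 2$, $\langle \rho,\tau\rangle = G$, the extra condition $\langle [\rho,\tau],\rho\rangle = G$ when $p$ is odd, and the structural hypothesis that $G$ is a $\mathscr{P}_2(p)$-group. The corollary should follow by pushing each of these down through the canonical epimorphism $\pi\colon G \to \overline{G}$, together with a direct appeal to Lemma~\ref{lem:p2} for the Sylow condition.

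First I would check the order condition: the image $\tau N$ has order dividing $|\tau|$, so $|\tau N|\le 2$. Next, since $\pi$ is surjective and $G = \langle \rho,\tau\rangle$, applying $\pi$ gives $\overline{G} = \langle \rho N, \tau N\rangle$. For the odd-$p$ extra hypothesis, I would use the fundamental fact that commutators behave well under homomorphisms: $[\rho N, \tau N] = [\rho,\tau]\, N$. Hence if $p$ is odd,
\[
\langle [\rho N,\tau N],\, \rho N\rangle \;=\; \pi\bigl(\langle [\rho,\tau],\rho\rangle\bigr) \;=\; \pi(G) \;=\; \overline{G}.
\]
The final ingredient is the $\mathscr{P}_2(p)$-property for $\overline{G}$, which is exactly what Lemma~\ref{lem:p2} supplies (taking the section $G/N$), since that lemma already established that the class of $\mathscr{P}_2(p)$-groups is closed under taking sections.

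Putting these four observations together verifies the defining list for $\mathscr{P}_2^+(p)$ with respect to $(\rho N,\tau N)$, which is the desired conclusion. I do not foresee any real obstacle here: the statement is genuinely a direct repackaging of Lemma~\ref{lem:p2} plus the elementary fact that generation and commutator relations are preserved under surjections. The only small subtlety to flag is that the condition $|\tau|\le 2$ could drop to $|\tau N|=1$ in the quotient (yielding a degenerate map in the sense of Subsection~\ref{sec:abelian}), but this is still permitted by the definition of $\mathscr{P}_2^+(p)$ and therefore causes no difficulty.
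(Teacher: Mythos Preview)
Your proposal is correct and follows essentially the same approach as the paper: invoke Lemma~\ref{lem:p2} to get the $\mathscr{P}_2(p)$-property for $G/N$, and observe that the remaining defining conditions (order of $\tau N$, generation, and the odd-$p$ commutator condition) pass to the quotient. The paper's own proof is simply a two-line version of what you wrote.
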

\begin{proof}
    By Lemma~\ref{lem:p2}, $G/N$ is a $\mathscr{P}_2(p)$-group.
    Then by the definition of $\mathscr{P}_2^+(p)$-groups,
    $G/N$ is a $\mathscr{P}_2^+(p)$-group with respect to $(\rho N,\tau N)$.
\end{proof}

\section{Solvable automorphism groups}\label{sec:solvable}
In this section, we aim to prove Theorem~\ref{thm:main-result-solvable}. 
Let $\Map(X,x,y)$ be a bi-rotary map of Euler characteristic $-p^n$, let $G=X/\O_p(X)$,  let $\rho=x\O_p(X)$ and let $\tau=y\O_p(X)$. 
Then $G$ is a solvable $\mathscr{P}^+_1(p)$-group with respect to $(\rho,\tau)$ and $\O_p(G)=1$. 
The case where $G$ is abelian has been discussed in Section \ref{sec:abelian}.
Therefore, we suppose that $G$ is non-abelian.
First, we state several useful lemmas on $\mathscr{P}^+_2(p)$-groups.

\begin{lemma}\label{solvable-gp-structure}
    Let $G$ be a non-abelian solvable $\mathscr{P}^+_{2}(p)$-group with respect to $(\rho,\tau)$ and suppose $\O_p(G)=1$. 
    Let $K=G_{\{2,3\}'}$ and $H=G_{\{2,3\}}$ be a Hall $\{2,3\}'$-subgroup and a Hall $\{2,3\}$-subgroup of $G$, respectively.
    Then the following holds.
    \begin{itemize}
        \item [\rm (i)] $G_p$ is isomorphic to $1$, or  $\Z_{p^e}$, or $\Z_2\times \Z_{2^e}$, or $\Z_3\times \Z_{3^e}$ for some positive integer $e$.
        \item [\rm (ii)] $K\lhd G$ and  
        \[G=K{:}H=\l a \r{:}(\l b\r \times H),\]
        where $\l a \r =[K,H]=[\l a\r, H]$, $\l b\r =C_K(H)$ and $\gcd(|a|,|b|)=1$.
    \end{itemize}
\end{lemma}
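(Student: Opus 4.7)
The plan is to address the two parts of the lemma in order, beginning with the Sylow $p$-structure in (i) and then building up to the global decomposition in (ii).

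For (i), I would exploit that $G=\l\rho,\tau\r$ with $|\tau|\le 2$. Since $\tau$ is either trivial or a $2$-element, it contributes to $G_p$ only when $p=2$; otherwise any Sylow $p$-subgroup of $G$ is generated by conjugates of the $p$-part $\rho_p$ of $\rho$ (and by $\tau$ as well when $p=2$). I would split into the three cases $p\ge 5$, $p=3$, $p=2$. In each case I use $\O_p(G)=1$ together with the refined generation condition $\l[\rho,\tau],\rho\r=G$ from the $\mathscr{P}_2^+$-hypothesis (when $p$ is odd) to control the normal closure of $\l\rho_p\r$ inside $G$. This forces $G_p$ to have rank at most two, and in the rank-two cases forces one of the factors to be exactly $\Z_p$, yielding the four possibilities listed.

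For (ii), I first observe that $K$ has order coprime to $6$, hence odd. By the $\mathscr{P}_2(p)$-condition, every Sylow subgroup of $K$ at a prime $r\ne p$ in $\{2,3\}'$ is cyclic or dihedral; the dihedral option is ruled out by odd order, so each such Sylow subgroup is cyclic. Combined with (i) applied in the case $p\notin\{2,3\}$, which gives the Sylow $p$-subgroup of $K$ cyclic as well, it follows that $K$ is a Z-group, hence metacyclic. Next I establish $K\lhd G$ by showing $K=\O_{\{2,3\}'}(G)$: setting $\overline G=G/\O_{\{2,3\}'}(G)$, we have $\O_{\{2,3\}'}(\overline G)=1$ by maximality, so the Fitting subgroup $F(\overline G)$ is a $\{2,3\}$-group, and the solvable Fitting inclusion $C_{\overline G}(F(\overline G))\le F(\overline G)$ together with $\O_p(G)=1$ forces $\overline G$ itself to be a $\{2,3\}$-group, i.e.\ $K\lhd G$. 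With $K$ normal, Schur--Zassenhaus supplies a complement $H$, a Hall $\{2,3\}$-subgroup, and $G=K{:}H$.

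Finally, the coprime action of $H$ on $K$ decomposes $K=[K,H]\cdot C_K(H)$. Since $K$ is a Z-group, I apply this prime by prime on each cyclic Sylow subgroup of $K$: for every prime divisor $r$ of $|K|$, the cyclic Sylow $r$-subgroup of $K$ falls entirely inside either $[K,H]$ or $C_K(H)$. This yields $[K,H]=\l a\r$ cyclic, $C_K(H)=\l b\r$ cyclic, and $\gcd(|a|,|b|)=1$, while the identity $[\l a\r,H]=\l a\r$ follows by iterating the coprime decomposition on $\l a\r$ itself (no fixed-point part remains). The main obstacle will be the normality of $K$ in $G$, since Hall subgroups of solvable groups are only conjugate in general; the argument hinges on translating the $\mathscr{P}_2^+(p)$-rigidity and $\O_p(G)=1$ into enough control on $G/\O_{\{2,3\}'}(G)$ to force the quotient to be a $\{2,3\}$-group. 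A secondary difficulty is the case $p\in\{2,3\}$ in (i), where $G_p$ is not directly constrained by $\mathscr{P}_2(p)$ and must instead be controlled through the action of $\tau$ together with $\O_p(G)=1$.
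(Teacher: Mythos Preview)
Your approach to part~(i) has a genuine gap. You propose to constrain $G_p$ by analysing the normal closure of the $p$-part $\rho_p$ of $\rho$, but the $\mathscr{P}_2(p)$-hypothesis says nothing about the Sylow $p$-subgroup, and $\O_p(G)=1$ alone does not bound its rank; I do not see how your sketch produces the listed possibilities. The paper's key idea, which you are missing, is the Fitting subgroup $F=F(G)$. Since $\O_p(G)=1$ we have $G_p\cap F=1$, so $G_p$ embeds into $\overline G=G/F$. Now $F_{2'}$ is cyclic (each $\O_r(G)$ with $r$ odd is cyclic by $\mathscr{P}_2(p)$) and $F_2$ is cyclic or dihedral, so $\Out(F)$ is abelian except when $F_2\cong\Z_2^2$, in which case an extra $\Z_3$ appears. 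Combined with $C_G(F)\le F$, this forces $\overline G$ to be abelian or to satisfy $\overline G'\cong\Z_3$; since $\overline G=\l\bar\rho,\bar\tau\r$ with $|\bar\tau|\le 2$, its Sylow $p$-subgroup (hence $G_p$) is then $1$, $\Z_{p^e}$, $\Z_2\times\Z_{2^e}$, or $\Z_3\times\Z_{3^e}$.

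For part~(ii) your strategy diverges from the paper's and is partly salvageable, but the ``prime by prime'' step is not sound as written. Sylow $r$-subgroups of $K$ are not a~priori $H$-invariant, and even after passing to an $H$-invariant one and showing it lies in $[K,H]$ or $C_K(H)$, you have not shown that $[K,H]$ itself is cyclic (it is a Z-group, but Z-groups need not be cyclic). The paper handles both points via the Fitting subgroup again: for normality of $K$ it simply invokes a known theorem (all Sylow subgroups metacyclic implies the Hall $\{2,3\}'$-subgroup is normal), and for cyclicity it observes $[K,H]\le G'\cap K\le F\cap K\le F_{2'}$, which is cyclic. The centraliser $C_K(H)$ is then cyclic because $C_K(H)\cong K/[K,H]$ is a $\{2,3\}'$-quotient of $G$, hence generated by the image of $\rho$ alone. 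Your Fitting argument on $G/\O_{\{2,3\}'}(G)$ for normality can be made to work, but note that when $p\in\{2,3\}$ you still need part~(i) to control $\O_p$ of that quotient, so you cannot bypass the Fitting analysis.
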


\begin{proof}
    Let $F$ be the Fitting subgroup of $G$. 
    Then $F=F_2\times F_{2'}$ and 
    $G/F\lesssim\Out(F_2)\times\Out(F_{2'})$. 
    As $G$ is solvable, $C_G(F)\leq F$.   
    Set $\overline{G}=G/F$. 
    Then  $\overline{G}$ is also generated by two elements $\overline{\rho}, \overline{\tau}$ with $|\overline{\tau}|\le 2$. 
    Since $G$ is a $\P{2}{p}$-group and $\O_p(G)=1$, for any prime $r>2$, $\O_r(G)$ is cyclic, and so thus $F_{2'}$ is also cyclic.  
    Therefore, $\Out(F_{2'})$ is abelian.
    Since $F_2$ is cyclic or dihedral, 
    $\Out(F_2)$ is abelian  except for the case $F_2=\Z_2^2$ and $\Out(F_2)\cong \S_3$. It follows immediately that all Sylow subgroups of $\overline{G}$ are abelian. Let $G_p$ be a Sylow $p$-subgroup of $G$. We have $G_pF/F$ is a Sylow $p$-subgroup of $\overline{G}$, and 
    $G_pF/F\cong G_p/(G_p\cap F)=G_p/\O_p(G)\cong G_p$ as $\O_p(G)=1$. 
    If $\overline{G}$ is abelian, then $G_p$ is isomorphic to $1$ or $\Z_{p^e}$ or $\Z_2\times\Z_{2^e}$ for some positive integer $e$ as $\overline{G}=\l \overline{\rho}, \overline{\tau}\r$ and $|\overline{\tau}|\le 2$.
    If $\overline{G}$ is non-abelian, then $1\ne \overline{G}'\le (S_3{\times} \Out(F_{2'}))'\cong\Z_3$. This gives $\overline{G}'\cong \Z_3$. If $p\ne 3$, then $G_p$ is isomorphic to a Sylow $p$-subgroup of $\overline{G}/\overline{G}'$ and again  $G_p$ is isomorphic to $1$ or $\Z_{p^e}$ or $\Z_2\times\Z_{2^e}$ for some positive integer $e$. If $p=3$, by similar argument, we have the Sylow $3$-subgroup of  $\overline{G}/\overline{G}'$ is cyclic. Therefore, $G_p=G_3\cong \overline{G}_3=\overline{G}'.\Z_{3^e}=\Z_3.\Z_{3^e}$. 
    Thus $G_3\cong \Z_{3^{e+1}}\mbox{ or }\Z_{3}\times\Z_{3^e}$. This completes the proof of part (i).

    Now, by part (i), all Sylow subgroups of $G$ are metacyclic. By \cite[Theorem~1]{Sylow-metacyclic1981},  we have $K\lhd G$.   
    Consider the conjugacy action of $H$ on $K$. As pointed out in \cite[p.176]{kurzweil2004theory}, $[K,H]$ is an $H$-invariant  normal subgroup of $K$, so is a normal subgroup of $G$. Obviously, $[K,H]$ is a $\{2,3\}'$-subgroup of $G'$. 
     Recall that either $\overline{G}=G/F$  is abelian or $\overline{G}'\cong \Z_3$.
     Therefore,  $[K,H]\le (G'\cap K)\leq (F\cap K)\leq F_{2'}$. This implies that $[K,H]$ is cyclic.  
     Let $[K,H]=\l a\r$. 
    Since $\gcd(|H|,|K|)=1$, by \cite[p.187 8.2.7]{kurzweil2004theory},  $K=[K,H]C_{K}(H)=\l a\r C_{K}(H)$ and $[K,H,H]=[K,H]$. It follows that $K=\l a \r C_{K}(H)$ and $[\l a\r , H]=[ [K,H],H]=[K,H,H]=[K,H]=\l a\r$. Now consider the coprime action of $H$ on $\l a\r$. By \cite[p.198 8.4.2]{kurzweil2004theory}, 
    \[\l a \r =[\l a \r, H]\times  C_{\l a \r}(H)=\l a \r \times C_{\l a\r}(H).\] This gives $1=C_{\l a\r}(H)=C_{K}(H)\cap \l a\r$ and it means $G=\l a\r {:}(C_{K}(H)\times H)$. Moreover $C_{K}(H)$ is a $\{2,3\}'$-group which is also a quotient of $G$. Hence $C_{K}(H)$ is cyclic. Set $C_{K}(H)=\l b\r$. By part (i), all Sylow subgroup of $K=\l a\r {:}\l b\r$ are cyclic, and therefore, $\gcd(|a|,|b|)=1$. Therefore
    \[G=\l a \r {:}(\l b\r \times H).\]
    This completes the proof of part (ii). 
\end{proof}

Lemma~\ref{solvable-gp-structure} {\rm (ii)} plays a fundamental role in analyzing the structure of the group $G$. 
We determine the possible structure of the Hall $\{2,3\}$-subgroup $H$ of $G$ in the following two lemmas. 

\begin{lemma}\label{hall23-normal}
 Let $G,H,a,b$ be as described in Lemma~\ref{solvable-gp-structure}(ii) and let $H_3$ be a Sylow $3$-subgroup of $H$. Suppose that $H$ is not abelian and  $ H_3\lhd H$. Then $H$ is isomorphic to $\D_{2{\cdot}3^e}$ or $\D_{2{\cdot}3^e}\times\Z_{2^f}$, 
   where $e,f$ are positive integers.
\end{lemma}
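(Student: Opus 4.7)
The approach is to decompose $H = H_3\rtimes H_2$ by Schur--Zassenhaus (valid since $H_3\lhd H$ and $\gcd(|H_3|,|H_2|)=1$) and then analyse the conjugation action $\phi:H_2\to\Aut(H_3)$, using the $\mathscr{P}_2^+(p)$-generation condition that $H$ inherits from $G$. Since $\l a\r\l b\r$ is a normal subgroup of $G$ with quotient isomorphic to $H$, Corollary~\ref{cor:p2+} gives that $H$ is a $\mathscr{P}_2^+(p)$-group with respect to the images $(\bar\rho,\bar\tau)$ of the original pair. As $H$ is non-abelian, $\bar\tau$ must be a nontrivial involution; and as $|H_3|$ is odd, I may assume $\bar\tau\in H_2$.

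The structural constraints from Lemma~\ref{solvable-gp-structure}(i) and the $\mathscr{P}_2(p)$ hypothesis give that $H_3\cong\Z_{3^e}$ (or, if $p=3$, possibly $\Z_3\times\Z_{3^e}$), and that $H_2\in\{\Z_{2^f},\Z_2\times\Z_{2^f}\}$ if $p=2$, while $H_2$ is cyclic or dihedral if $p\neq 2$. Since $\Aut(\Z_{3^e})$ has $2$-part $\Z_2$ (inversion), whenever $H_3$ is cyclic the image $\phi(H_2)$ has order at most $2$.

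The bulk of the proof is a case analysis. If $\phi$ is trivial then $H=H_3\times H_2$ is non-abelian only when $H_2$ is non-abelian, so $H_2\cong\D_{2^{f+1}}$ with $p=3$; but then $[\bar\rho,\bar\tau]\in H_2$ lies in the commutator subgroup $\l r^2\r$ of $\D_{2^{f+1}}$, hence $\l[\bar\rho,\bar\tau],\bar\rho\r$ projects in $H/H_3=H_2$ to a proper subgroup, contradicting the odd-prime condition $\l[\bar\rho,\bar\tau],\bar\rho\r=H$. If $\phi$ is nontrivial and $H_3$ is cyclic: a cyclic $H_2$ of order $\geq 4$ has its unique involution inside $\ker\phi$, so $\bar\tau$ is central in $H$ and $\l\bar\rho,\bar\tau\r$ is abelian (contradiction), giving $H_2=\Z_2$ and $H\cong\D_{2\cdot 3^e}$; a group $H_2=\Z_2\times\Z_{2^f}$ (only $p=2$) yields, after analysing its three index-$2$ kernels, either $H\cong\D_{2\cdot 3^e}\times\Z_{2^f}$ (for the direct $\Z_{2^f}$ kernel, or for its diagonal conjugate after a basis change of $H_2$) or the central-involution contradiction (for the kernel $\Z_2\times\Z_{2^{f-1}}$); a dihedral $H_2$ (only occurring when $p=3$) is ruled out by the same commutator argument as in the trivial-action case.

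The principal remaining difficulty is excluding the possibility $H_3\cong\Z_3\times\Z_{3^e}$, which can occur only when $p=3$. My plan is to pass to the Frattini quotient $H/\Phi(H_3)$, reducing to the case where the $3$-Sylow is the elementary abelian $V\cong\Z_3^2$, and then examine the induced map $H_2\to\GL(2,3)$. The $2$-generation of $H$ forces the $\FF_3$-span of the $H_3$-parts of $\bar\rho$ and its conjugates under $\bar\tau$ and powers of $\bar\rho$ to equal $V$, which severely constrains the involution $\phi(\bar\tau)\in\GL(2,3)$; combined with the odd-prime condition $\l[\bar\rho,\bar\tau],\bar\rho\r=H$, I expect a short calculation to show that no such non-cyclic $H_3$ is compatible with the hypotheses, which then completes the proof.
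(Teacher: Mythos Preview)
Your approach for the cyclic $H_3$ cases is sound and close to the paper's (which handles them by analysing $C=C_{H_2}(H_3)$ rather than splitting on the shape of $H_2$, but the content is the same). One minor slip: dihedral $H_2$ is possible for \emph{every} odd $p$, not only $p=3$; your commutator argument still goes through verbatim since the odd-prime condition $\langle[\bar\rho,\bar\tau],\bar\rho\rangle=H$ is available for all odd $p$.

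The genuine gap is in the non-cyclic case $H_3\cong\Z_3\times\Z_{3^e}$. Your plan is to work entirely inside $H$, using only the $\mathscr{P}_2^+(3)$ data (two-generation together with $\langle[\bar\rho,\bar\tau],\bar\rho\rangle=H$). These conditions do \emph{not} exclude a non-cyclic $H_3$. For a concrete counterexample take
\[
H=\langle u\rangle\times\bigl(\langle c\rangle{:}\langle d\rangle\bigr)\cong\Z_3\times\D_6,\qquad \bar\rho=ucd,\ \bar\tau=d.
\]
Here $H_3=\langle u,c\rangle\cong\Z_3^2$ and $H_2=\langle d\rangle$. One checks $\langle\bar\rho,\bar\tau\rangle=H$, $[\bar\rho,\bar\tau]=c^{-1}$, $\bar\rho^{\,2}=u^{-1}$, and hence $\langle[\bar\rho,\bar\tau],\bar\rho\rangle=\langle c,u,ucd\rangle=H$. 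So every $H$-internal hypothesis you intend to use is satisfied, yet $H_3$ is non-cyclic; no calculation in $\GL(2,3)$ on the Frattini quotient can save this.

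What is missing is the global hypothesis $\O_p(G)=1$, which you never invoke here. The paper's argument is: if $H_2$ acts nontrivially on $H_3$ then $H'\cap H_3\neq 1$; since $\Aut(\langle a\rangle)$ is abelian, $H'$ centralises $\langle a\rangle$, and as $\langle b\rangle$ centralises $H$, the subgroup $H'\cap H_3$ is normal in $G=\langle a\rangle{:}(\langle b\rangle\times H)$, contradicting $\O_3(G)=1$. If instead $H_2$ acts trivially on $H_3$, then $H_3$ is a quotient of $G$ and hence is generated by the image of $\rho$ alone (the image of $\tau$ being trivial in a $3$-group), forcing $H_3$ cyclic. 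You need to import this use of $\O_p(G)=1$ to close the case.
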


\begin{proof}
  Let $H_2$ be a Sylow $2$-subgroup of $H$. 	By Lemma~\ref{solvable-gp-structure}(i), $H_3\cong  \Z_{3^e}$ or $\Z_{3}\times \Z_{3^e}$, and $H_2\cong \Z_{2^f}, \D_{2^{2+f}}$ or $\Z_2\times \Z_{2^f}$, where $e,f$ are positive integers. 

  	 In the case where $H_3\cong \Z_{3}\times \Z_{3^e} $, we have  $p=3$. If $H_2$ acts non-trivially on $H_3$, then $H'\cap H_3\neq 1$ is a normal subgroup in $H$. Note that $\Aut(\l a\r)$ is abelian. The centralizer $C_H(\l a\r )\ge H'\cap H_3$. Thus $1\neq H'\cap H_3$ is centralized by both $a$ and $b$ and normal in $H$, which makes it a normal subgroup of $G$. This contradicts $\O_p(G)=1$. If $H_2$ acts trivially on $H_3$, then $H_3$ is also a homomorphic image of $G$. That means $H_3$ can be generated by two elements, one of which has an order less than or equal to $2$. This contradicts  $H_3\cong \Z_{3}\times \Z_{3^e} $. 

  	 For the case $H_3\cong \Z_{3^e}$, note that $H_2$ is also a homomorphic image of $G$. 
  	 There exist elements $\overline{\rho}, \overline{\tau}\in H_2$ with $|\overline{\tau}|\le 2$ such that $\l \overline{\rho},\overline{\tau}\r=H_2$. 
  	 The automorphism groups $\Aut(H_3)$ and $\Aut(\l a\r)$ are abelian. Hence $H'_2$ centralize both $a, b$ and $H_3$, which implies $H'_2\lhd G$. 
  	 If $H'_2\ne1$, then $\O_2(G)\ge H'_2$ is not trivial. 
  	 But, when $H'_2\ne 1$, we have $H_2\cong \D_{2^{2+f}}$ and $\l \overline{\rho},[\overline{\rho}, \overline{\tau}]\r<H_2$. Then $p=2$ and $\O_2(G)=1$, which is a contradiction. 
  	 Thus, we have $H_2\cong \Z_{2^f}$ or $\Z_2\times \Z_{2^f}$. 	
  	  Recall that $H$ is a homomorphic image of $G$. 
	So, there exist $\rho'\in H$ and $\tau'\in H_2$ with $|\tau'| = 2$ such that $\l \rho',\tau'\r=H$.
  	  Now set  $C=C_{H_2}(H_3)$. 
  	  It is easy to see that $C\lhd H$. If $\tau'\in C$, then we have $H/C$ is cyclic, which implies that $H_2/C$ acts trivially on $H_3$. This gives $C=H_2$ and $H=\Z_{2^f{\cdot}3^e}$ or $\Z_{2^f{\cdot} 3^e}\times \Z_2 $. Now suppose that $\tau'\notin C$. Note that the Sylow $2$-subgroup of  $\Aut(H_3)$ is isomorphic to $\Z_2$. It follows that $H\cong(\Z_{3^e}\times C){:}\l \tau'\r$ and $H_2=C{:}\l\tau'\r=C\times \l \tau'\r$. 
      If $H_2\cong \Z_{2^f}$, then $C=1$ and $f=1$. Therefore  $H\cong \D_{2{\cdot} 3^e}$.  
      If $H_2\cong \Z_{2^f}\times \Z_2$, then  $C=\Z_{2^{f}}$. Therefore $H\cong \D_{2{\cdot}3^e}\times\Z_{2^f}.$
\end{proof}

\begin{lemma}\label{hall23-not-normal}
 Let $G,H,a,b$ be  as described in Lemma~\ref{solvable-gp-structure}(ii) and let $H_3$ be a Sylow $3$-subgroup of $H$. Suppose that $H_3\notlhd H$. Then $H$ is isomorphic to one of the following groups:
   \[\Z_2^2{:}\Z_{3^e},\  \S_4{\times }\Z_{3^{e}},\  \Z_2^2{:}\D_{2{\cdot} 3^e}, \]
   where $e$ is a positive integer.
\end{lemma}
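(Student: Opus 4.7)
The plan is to first rule out cyclic Sylow-$2$ subgroups via the Burnside transfer theorem, then pin down $\O_2(H)\cong\Z_2^2$ via the coprime action on the Fitting subgroup, and finally reconstruct $H$ from the structure of $H_3$ and the two-generator constraint inherited from $G$. For the first step, if $H_2$ were cyclic then $\Aut(H_2)$ would be a $2$-group; since $C_H(H_2)\ge H_2$, the quotient $N_H(H_2)/C_H(H_2)$ would be simultaneously a $2$-group and of order coprime to $2$, forcing $N_H(H_2)=C_H(H_2)$ and (by Burnside's transfer theorem) giving $H$ a normal $2$-complement --- which is $H_3$, contradicting $H_3\notlhd H$. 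Combining this with Lemma~\ref{solvable-gp-structure}(i) and the $\P{2}{p}$-condition on $G$, the remaining possibilities for $H_2$ are $\Z_2\times\Z_{2^f}$ (forcing $p=2$) or $\D_{2^{f+1}}$ (forcing $p\ne 2$), both with $f\ge 1$, and for $H_3$ they are $\Z_{3^e}$ or, only when $p=3$, $\Z_3\times\Z_{3^e}$.

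Next, consider the Fitting subgroup $F(H)=\O_2(H)\times\O_3(H)$. Solvability gives $C_H(F(H))\le F(H)$, so $H/F(H)$ embeds in $\Aut(\O_2(H))\times\Aut(\O_3(H))$. Since $H_3$ is abelian and $\O_3(H)\le H_3$, the image of $H_3/\O_3(H)$ in $\Aut(F(H))$ projects trivially to the $\Aut(\O_3(H))$-factor and embeds into $\Aut(\O_2(H))$. If this image were trivial we would have $H_3=\O_3(H)\lhd H$, contradicting $H_3\notlhd H$; hence $\Aut(\O_2(H))$ must admit a non-trivial $3$-subgroup. Since $\Aut(\Z_{2^k})$, $\Aut(\Z_2\times\Z_{2^k})$ for $k\ge 2$, and $\Aut(\D_{2^{k+1}})$ for $k\ge 2$ are all themselves $2$-groups (while $\Aut(\Z_2^2)=\S_3$ is not), we conclude $\O_2(H)\cong\Z_2^2$ and $H_3/\O_3(H)\cong\Z_3$.

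Finally, pick $\rho',\tau'\in H$ with $|\tau'|\le 2$ and $\l\rho',\tau'\r=H$, available since $H$ is a homomorphic image of $G$. Iterating the Fitting-subgroup analysis on $H/\O_2(H)$ (whose own $\O_2$ is trivial and whose Fitting subgroup is a $3$-group) shows that when $H_3$ is cyclic the Sylow-$2$ part of $H/\O_2(H)$ embeds into $\Aut(\Z_{3^e})$, which has a unique involution, so $|H_2|\in\{4,8\}$. Matching the possible shapes of $H_3$ with the location of $\tau'$ then yields the three families claimed: $|H_2|=4$ with $H_3$ cyclic gives $H\cong\Z_2^2{:}\Z_{3^e}$; $|H_2|=8$ with $H_3$ cyclic gives $H\cong\Z_2^2{:}\D_{2\cdot 3^e}$ (the extra involution of $H_2\setminus\O_2(H)$ being forced to invert $H_3$ by the uniqueness of the involution in $\Aut(\Z_{3^e})$); and $p=3$ together with $H_3\cong\Z_3\times\Z_{3^e}$ gives $H\cong\S_4\times\Z_{3^e}$, with the $\Z_{3^e}$-factor splitting off as a central direct summand. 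I expect the main obstacle to be this last sub-case, specifically verifying that the $\Z_{3^e}$-factor really is central and that the extension splits rather than twisting non-trivially; this requires a careful check that no $\Aut(\Z_{3^e})$-action is compatible with the two-generator constraint on $H$.
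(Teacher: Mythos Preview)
Your overall strategy---ruling out cyclic $H_2$ via Burnside transfer, then forcing $\O_2(H)\cong\Z_2^2$ via the Fitting embedding, then passing to $H/\O_2(H)$---is genuinely different from the paper's, which instead shows directly that $\overline{H}:=H/\O_3(H)\cong\A_4$ or $\S_4$ and reads off $H$ from the shape of $H_3$. For the two families with $H_3$ cyclic your route works, though one detail is loose: in the $|H_2|=8$ case you speak of ``the extra involution of $H_2\setminus\O_2(H)$'', but if $p=2$ then $H_2\cong\Z_2\times\Z_4$ and there is \emph{no} involution outside $\O_2(H)$. This case must be excluded separately (for instance: all involutions of $H$ then lie in $\O_2(H)$, so $\tau'\in\O_2(H)$ and $H/\O_2(H)$ would be cyclic, contradicting $H/\O_2(H)\cong\D_{2\cdot 3^e}$).

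The real gap is the sub-case $p=3$, $H_3\cong\Z_3\times\Z_{3^e}$. You propose to pin down $H\cong\S_4\times\Z_{3^e}$ using only the two-generator constraint on $H$, but this is not how the paper proceeds and it is not clear it suffices: one must rule out every extension $\Z_2^2.\bigl((\Z_3\times\Z_{3^e}){:}\Z_2\bigr)$ where the $\Z_2$ acts by a non-central involution of $\Aut(\Z_3\times\Z_{3^e})$, and for $e\ge 2$ there are several such involutions to check. The paper bypasses this entirely by using the embedding of $H$ in $G$: since $\Aut(\l a\r)$ is abelian, $H'$ centralises $\l a\r$ (and $\l b\r$), so $H'\cap\O_3(H)$ is normal in $G$ and hence lies in $\O_p(G)=1$. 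From $H'\cap\O_3(H)=1$ and $H/\O_3(H)\cong\S_4$ one gets $H'\cong\A_4$, and then $[H',\O_3(H)]\le H'\cap\O_3(H)=1$ forces $\O_3(H)$ to be a central direct factor, yielding $H\cong\S_4\times\Z_{3^e}$. This use of the ambient group $G$ (specifically the hypothesis $\O_p(G)=1$) is the missing ingredient in your outline; the internal constraints on $H$ alone do not obviously deliver it.
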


\begin{proof}
  	Let $\overline{H}=H/\O_3(H)$. We first claim that $\overline{H}\cong \A_4$ or $\S_4$. 
   Since $H_3$ is not normal in $H$, $\overline{H}_3\neq 1$ and $\O_3(\overline{H})=1$. Set $\overline{N}=\O_2(\overline{H})$ and
   let $\overline{M}$ be the preimage of $\O_3(\overline{H}/\overline{N})$ in $\overline{H}$. Thus $\overline{M}=\overline{N}{:}\overline{M}_3$. 
   Let $\overline{C}=C_{\overline{M}}(\overline{N})$.
   Since both $\overline{M}$ and $\overline{N}$ are normal in $\overline{H}$, $\overline{C}=C_{\overline{H}}(\overline{N})\cap \overline{M}\lhd \overline{H}$. 
   Note that $\overline{C}\cap \overline{M}_3$
   is a characteristic subgroup of $\overline{C}$.
   We claim that $\overline{C}\cap \overline{M}_3= 1$.
   Otherwise, 
   $\overline{C}\cap \overline{M}_3$ is a non-trivial normal $3$-subgroup of $\overline{H}$, which contradicts with $\O_3(\overline{H})=1$. 
   Therefore, 
   the conjugacy action of $\overline{M}_3$ on $\overline{N}$ is faithful. Note that $\Aut(\overline{N})$ is a $2$-group except for the case $\overline{N}\cong \Z_2^2$. 
   Thence $\overline{M}_3\cong \Z_3$  and $\overline{M}\cong \A_4$.
   Now, consider the conjugacy action of $\overline{H}$ on the four $\Z_3$ in $\overline{M}$ with kernel $\overline{L}$. 
   Note that $\overline{M}$ acts faithfully on these four groups. Hence $\overline{L}\cap \overline{M}=1$. But $\O_3(\overline{H})=1$ and $\O_2(\overline{H})=\overline{N}\le \overline{M}$ which gives $\O_3(\overline{L})=\O_2(\overline{L})=1$. It follows that $\overline{L}=1$ and $\overline{H}\cong \A_4$ or $\S_4$. 

   Now, note that every Sylow $3$-subgroup of $H$ is abelian and contains $\O_3(H)$. Hence, $C_H(\O_3(H))$ contains the normal closure of $H_3$.
   Also, notice that the normal closure of $H_3$ is the preimage of $\overline{M}$. Hence, the preimage of $\overline{N}$ is $\O_2(H)\times \O_3(H)$. If $\overline{H}\cong \A_4$, then $H_3\cong H/\O_2(H)$ is a homomorphic image of $G$. This implies $H_3$ is cyclic and $H=\Z_2^2{:}\Z_{3^e}$.
 If $\overline{H}\cong \S_4$ and $H_3\cong \Z_{3^e}$, then $H\cong  \Z_2^2{:}\D_{2{\cdot} 3^e}$
    If $\overline{H}\cong \S_4$ and $H_3\cong \Z_3\times \Z_{3^e}$, then $p=3$. Note that $\Aut(\l a\r)$ is abelian. The commutator subgroup $H'$ centralizes both $a$ and $b$. Thus $H'\cap \O_3(H)$ is a normal subgroup of $G$. Therefore, $H'\cap \O_3(H)\le \O_p(G)=1$. Thus 
   \[\A_4=\S_4'\cong \overline{H}'=H'\O_3(H)/\O_3(H)\cong H'/(\O_3(H)\cap H')\cong H',\] and \[H'\O_3(H)=H'\times \O_3(H)=\A_4 \times \Z_{3^e}.\]
   Moreover, $H=\S_4\times \Z_{3^e}$. 
\end{proof}

\begin{lemma}\label{lem:P1psolvable}
    Let $G$ be a non-abelian solvable $\mathscr{P}^+_{1}(p)$-group with respect to $(\rho,\tau)$ and suppose $\O_p(G)=1$. 
    Let $H$ be a Hall $\{2,3\}$-subgroup of $G$.
    Then $H$ is isomorphic to one of the following groups
    \[\Z_{k_1}, \ \Z_{k_1}{\times} \Z_2,\ \Z_{2^f}{\times}\operatorname{D}_{2{\cdot} 3^e}, \ \Z_{2}^2{:} \Z_{3^e}, \ \Z_2^2{:}\Z_{3^e}, \]
    where $k_1, e,f$ are positive integers.
\end{lemma}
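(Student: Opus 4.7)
The plan is to combine the structural decomposition available for $\mathscr{P}_2^+(p)$-groups with the additional order constraint imposed by the stronger $\mathscr{P}_1^+(p)$ hypothesis. First I would invoke Lemma~\ref{relationpi}(iii) to obtain that $G$ is automatically a $\mathscr{P}_2^+(p)$-group, and then apply Lemma~\ref{solvable-gp-structure} to get the decomposition $G=\l a\r{:}(\l b\r\times H)$, where $K:=\l a\r\l b\r\lhd G$ is a cyclic Hall $\{2,3\}'$-subgroup, together with the restrictions on the Sylow $p$-subgroup $G_p$ listed in part~(i) of that lemma. Since $K\lhd G$, Corollary~\ref{cor:p1+} then yields that $H\cong G/K$ is itself a $\mathscr{P}_1^+(p)$-group with respect to $(\rho',\tau'):=(\rho K,\tau K)$, so the entire analysis can be carried out inside $H$.

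Next I would split according to the structure of $H$. If $H$ is abelian, then since $H=\l\rho',\tau'\r$ with $|\tau'|\le 2$ and $H$ commutative, either $\tau'\in\l\rho'\r$ and $H\cong\Z_{k_1}$, or $H=\l\rho'\r\times\l\tau'\r\cong\Z_{k_1}\times\Z_2$. If $H$ is non-abelian, I would distinguish according to whether the Sylow $3$-subgroup $H_3$ is normal in $H$. If $H_3\lhd H$, Lemma~\ref{hall23-normal} gives $H\cong\D_{2\cdot 3^e}$ or $\Z_{2^f}\times\D_{2\cdot 3^e}$, both of which appear in the claimed list. If $H_3\notlhd H$, Lemma~\ref{hall23-not-normal} produces three candidates, $\Z_2^2{:}\Z_{3^e}$, $\S_4\times\Z_{3^e}$, and $\Z_2^2{:}\D_{2\cdot 3^e}$; only the first appears in the list, so the remaining two must be eliminated using the $\mathscr{P}_1^+(p)$ hypothesis.

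The main obstacle is ruling out $\S_4\times\Z_{3^e}$ and $\Z_2^2{:}\D_{2\cdot 3^e}$ via the $\mathscr{P}_1(p)$ order relation $|H|=p^n\cdot\lcm(|\l\rho'\r|,|\l\tau',\tau'^{\rho'}\r|)$. For $\S_4\times\Z_{3^e}$, the proof of Lemma~\ref{hall23-not-normal} forces $p=3$; since every involution lies in $\S_4\times 1$, both $\tau'$ and $\tau'^{\rho'}$ are conjugate involutions of $\S_4$, and any two conjugate involutions of $\S_4$ generate a subgroup of order at most $6$ (namely $\S_3$ or $\Z_2^2$), while $|\l\rho'\r|\le 4\cdot 3^e$, giving $\lcm\le 12\cdot 3^e$ and hence $|H|/\lcm\ge 2$, which is not a power of $3$. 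For $\Z_2^2{:}\D_{2\cdot 3^e}$, the Sylow $2$-subgroup is $\D_8$, which by Lemma~\ref{solvable-gp-structure}(i) is incompatible with $p=2$; for $p\ne 2$ I would classify element orders in the semidirect product (using that the normal $\Z_{3^e}$ acts trivially on $\Z_2^2$) to show the maximal cyclic order is $2\cdot 3^e$ and that any dihedral subgroup generated by two involutions has order at most $4\cdot 3^e$ (an inverting involution for an order-$2\cdot 3^e$ element exists only when the $\Z_2^2$-component is the unique nonzero vector fixed by the induced $\S_3$-action), so again $|H|/\lcm\ge 2$, incompatible with the odd prime $p$. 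The delicate step is this dihedral-order calculation in $\Z_2^2{:}\D_{2\cdot 3^e}$; once it is in hand, the two extraneous cases are excluded and the claim follows.
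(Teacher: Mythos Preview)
Your overall strategy matches the paper's: invoke Lemma~\ref{solvable-gp-structure} to write $G=\l a\r{:}(\l b\r\times H)$ with $H\cong G/K$ a $\mathscr{P}_1^+(p)$-group, dispose of the abelian case by the two-generator observation, apply Lemmas~\ref{hall23-normal} and~\ref{hall23-not-normal} otherwise, and then eliminate $\S_4\times\Z_{3^e}$ and $\Z_2^2{:}\D_{2\cdot3^e}$. The divergence is only in this last elimination, and that is where your argument has gaps.

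First, in both exceptional cases you write ``$|H|/\lcm\ge 2$, which is not a power of~$3$ (resp.\ of the odd prime~$p$)''. That inference is invalid: $3,9,\ldots$ are all $\ge 2$. What you really need, and what your bounds do deliver, is that the $2$-part of $|H|$ is $8$ while the $2$-part of $\lcm(|\rho'|,|\l\tau',\tau'^{\rho'}\r|)$ is at most $4$, so that $|H|/\lcm$ is \emph{even}. Second, for $H=\Z_2^2{:}\D_{2\cdot3^e}$ you assert that the normal $\Z_{3^e}$ acts trivially on $\Z_2^2$. This is false: if it did, $H_3$ would be normal in $H$, contrary to the hypothesis of Lemma~\ref{hall23-not-normal}. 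In fact $H/\O_3(H)\cong\S_4$ forces $\Z_{3^e}$ to act on $\Z_2^2$ through its quotient $\Z_3$ with kernel $\Z_{3^{e-1}}$; consequently the maximal element order in $H$ is $3^e$ (or $4$ when $e=1$), not $2\cdot3^e$. Your bound $\lcm\le 4\cdot 3^e$ happens to survive these errors, but the justification you give does not.

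The paper avoids these computations entirely. It notes that both $\S_4\times\Z_{3^e}$ and $\Z_2^2{:}\D_{2\cdot3^e}$ have $\S_4$ as a homomorphic image, so by Corollary~\ref{cor:p1+} the group $\S_4$ is itself a $\mathscr{P}_1^+(p)$-group with respect to the induced pair. A run over all generating pairs of $\S_4$ (the paper cites Magma; your $2$-part observation, once stated correctly, does it by hand) forces $p=2$. But then $\O_2(G)=1$, while $H'\cap\O_2(H)\cong\Z_2^2$ is centralised by $\l a\r$ (since $\Aut(\l a\r)$ is abelian and hence $H'$ acts trivially on $\l a\r$) and by $\l b\r$, and is normal in $H$; thus it is a nontrivial normal $2$-subgroup of $G$, a contradiction. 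This argument is shorter and handles both exceptional cases uniformly; your observation that $G_2\cong\D_8$ already contradicts Lemma~\ref{solvable-gp-structure}(i) when $p=2$ is a nice alternative for one direction, but you still need a clean odd-$p$ argument, and passing to the $\S_4$ quotient is the simplest way to get it.
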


\begin{proof}
    By Lemma~\ref{relationpi}, $G$ is a non-abelian solvable $\mathscr{P}^+_{1}(p)$-group with respect to $(\rho,\tau)$. By Lemma~\ref{solvable-gp-structure}, $H$ is a homomorphic image of $G$. Thus $H$ is generated by $\tilde{\rho}, \tilde{\tau}$, the image of $\rho,\tau$ in $H$. If $H$ is abelian, then $H\cong \Z_{k_1}$ or $\Z_{k_1}{\times}\Z_2$. If $H$ is non-abelian, then by Lemma~\ref{hall23-normal} and Lemma~\ref{hall23-not-normal}, $H$ is isomorphic to  one of the following groups: $\D_{2{\cdot}3^e}$, $\D_{2{\cdot}3^e}{\times}\Z_{2^f}$, $\Z_2^2{:}\Z_{3^e}$, $\S_4{\times}\Z_{3^e}$, and $\Z_2^2{:}\D_{2{\cdot}3^e}$. 
    We only need to rule out the possibility of $\S_4{\times}\Z_{3^e}$ and $\Z_2^2{:}\D_{2{\cdot}3^e}$. 
 These two groups all have a homomorphic image $\S_4$, which is also a homomorphic image of $G$. 
 Let $\bar{\rho}, \bar{\tau}$ be the images of $\rho,\tau$ in $\S_4$. 
 By the proof of Lemma~\ref{relationpi}, $S_4$ is also a $\mathscr{P}_1^+(p)$-group. 
 To be precise, $|\S_4|/\lcm(|\bar{\rho}|, 2{\cdot}|[\bar{\rho}, \bar{\tau}]|)$ is a power of $p$. 
 Using Magma\cite{MAGMA} to run through all generating pairs $(\bar{\rho}, \bar{\tau})$ of $\S_4$ reveals that $p=2$ and hence $\O_p(G)=\O_2(G)=1$. But, $1\neq H'\cap \O_2(H)=\Z_2^2 \neq 1$ is a normal subgroup of $G$, which is a contradiction. 
\end{proof}

The previous lemmas describe the Hall $\{2,3\}$-subgroup of $G$. Next, we investigate when $G$ admits a generating pair $(\rho,\tau)$, and calculate the type of the corresponding map  $\mathscr{M}(G,\rho,\tau)$. We first give a lemma on the order of an element of a metacyclic group, which turns out to be useful in future discussions.

\begin{lemma}\label{lem-meta-order}
 Let $G=\l x\r {:}\l y\r$ be a metacyclic group with $\l x\r=\l [x,y]\r$ and $\gcd(|x|, |y|)=1$. Then for any integer $i$, $|x^iy|=|y|$. In particular, if $|y|=2$, then $x^y=x^{-1}$. 
\end{lemma}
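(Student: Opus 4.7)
The plan is to exploit the conjugation action of $\langle y\rangle$ on the normal cyclic subgroup $\langle x\rangle$. Write $x^y=x^r$ for some integer $r$, so that $[x,y]=x^{-1}x^y=x^{r-1}$. The hypothesis $\langle x\rangle=\langle[x,y]\rangle=\langle x^{r-1}\rangle$ translates into the crucial coprimality statement
\[\gcd(r-1,\,|x|)=1.\]
This relation is the engine driving the whole argument.

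Next I would compute $(x^iy)^m$ by repeatedly pushing factors of $x$ to the left through $y$: an induction on $m$ yields
\[(x^iy)^m = x^{i(1+r+r^2+\cdots+r^{m-1})}\, y^m.\]
Setting $m=|y|$, the $y$-part vanishes and we are left with $(x^iy)^{|y|}=x^{i\,S}$, where $S=1+r+\cdots+r^{|y|-1}$. Since conjugation by $y$ has order dividing $|y|$ on $\langle x\rangle$, we have $r^{|y|}\equiv 1\pmod{|x|}$, hence $(r-1)S\equiv 0\pmod{|x|}$. Combined with $\gcd(r-1,|x|)=1$, this forces $|x|\mid S$, and therefore $(x^iy)^{|y|}=1$. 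So $|x^iy|$ divides $|y|$.

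For the reverse divisibility, I would pass to the quotient $G/\langle x\rangle\cong\langle y\rangle$: the image of $x^iy$ is $y\langle x\rangle$, which has order $|y|$ there, so $|y|$ divides $|x^iy|$. Combined with the previous paragraph this gives $|x^iy|=|y|$, as required.

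Finally, for the ``in particular'' clause, specialize to $|y|=2$: then $r^2\equiv 1\pmod{|x|}$, so $(r-1)(r+1)\equiv 0\pmod{|x|}$, and the coprimality $\gcd(r-1,|x|)=1$ forces $r\equiv -1\pmod{|x|}$, i.e.\ $x^y=x^{-1}$. There is no real obstacle here; the only mildly delicate point is verifying the formula for $(x^iy)^m$, and converting $\langle x\rangle=\langle[x,y]\rangle$ into $\gcd(r-1,|x|)=1$ — both are standard and short.
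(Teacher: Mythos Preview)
Your argument is correct and is essentially the paper's own proof (the paper sets $x^{y^{-1}}=x^s$, derives $\gcd(s-1,|x|)=1$, and runs the same geometric-sum computation; for the final clause it uses $|xy|=2$ directly rather than factoring $r^2-1$). One small caution: with the standard convention $x^y=y^{-1}xy=x^r$, the identity $(x^iy)^m=x^{\,i(1+r+\cdots+r^{m-1})}y^m$ is not correct for arbitrary $m$ --- the exponent should involve $s=r^{-1}$ rather than $r$ --- but since $r^{|y|}\equiv 1\pmod{|x|}$ the two sums $\sum_{j<|y|} r^j$ and $\sum_{j<|y|} s^j$ agree modulo $|x|$, so your conclusion at $m=|y|$ is unaffected.
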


\begin{proof}
 Suppose $k=|y|$ and $x^{y^{-1}}=x^s$. For any integer $i$, it is clear that $k\mid |x^iy|$. 
 The calculation of the $k$-th power of $x^iy$ shows that
 \[(x^iy)^k=x^{i(1+s+s^2+\dots+s^{k-1})}=x^{i(s^k-1)/(s-1)}.\]
 Note that $x=x^{y^{-k}}=x^{s^k}$ which implies that $s^k-1$ is a multiple of $|x|$. If $\gcd(s-1,|x|)\ne 1$, let $r$ be a prime dividing $\gcd(s-1,|x|)$. Then $[x,y^{-1}]=x^{s-1}\in \l x^r\r <\l x\r$, which is a contradiction. Hence $\gcd(s-1,|x|)=1$ and $(s^k-1)/(s-1)$ is a multiple of $|x|$ which implies $(x^iy)^k=1$. This forces $|x^iy|=k=|y|$. Moreover, if $|y|=2$, we have $|xy|=2$. Thus $x^y=yxy=x^{-1}(xy)^2=x^{-1}$ which completes the proof. 
\end{proof}

Now, we are ready to decide when the group $G$ has a generating pair $(\rho,\tau)$. 

\begin{lemma}\label{gensofsolvalbegp}
 Let $G,H,a,b$ be as described in Lemma~\ref{solvable-gp-structure}(ii) and let $\rho,\tau$ be two elements of $G$ with $|\tau|=2$. Then there exist $g\in G$, $\rho_0,\tau_0\in H$ and two integers $0\le i\le |a|-1$, $1\le j\le |b|-1$ such that $\rho^g=a^ib^j\rho_0$, $\tau^g=\tau_0$. Moreover $\l \rho,\tau\r=G$ if and only if $\l \rho_0,\tau_0\r=H$, $a^{\tau_0}=a^{-1}$ and $\gcd(i, |a|)=\gcd(j,|b|)=1$.
\end{lemma}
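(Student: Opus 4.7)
\medskip\noindent\textit{Proof plan.}
The plan is to first exploit that $\langle b\rangle$ is in fact central in $G$, reducing to a semidirect product analysis over $G_0=\langle a\rangle\rtimes H$, and then to derive the four conditions by projecting onto successive quotients. Note that $\langle b\rangle=C_K(H)$ commutes with $H$ by definition, and commutes with $\langle a\rangle$ because $K=\langle a\rangle\times\langle b\rangle$ is abelian. Hence $\langle b\rangle\le Z(G)$, and as $\gcd(|b|,|G_0|)=1$ we may identify $G=\langle b\rangle\times G_0$, with every element uniquely of the form $a^ib^jh$ ($0\le i<|a|$, $0\le j<|b|$, $h\in H$).

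For the existence claim, the involution $\tau$ is a $2$-element, so it lies in a Sylow $2$-subgroup and hence in some Hall $\{2,3\}$-subgroup of $G$. By Hall's theorem for the solvable group $G$, all such Hall subgroups are conjugate to $H$, so there exists $g\in G$ with $\tau^g\in H$. Setting $\tau_0=\tau^g$ and writing $\rho^g=a^ib^j\rho_0$ via the unique factorization above yields the required decomposition.

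For the sufficiency direction in the ``moreover'' part, assume the four conditions and set $L=\langle\rho^g,\tau^g\rangle=\langle a^ib^j\rho_0,\tau_0\rangle$. Since $\langle b\rangle$ is central and $\gcd(|b|,|G_0|)=1$, any subgroup of $G=\langle b\rangle\times G_0$ splits as $(L\cap\langle b\rangle)\times(L\cap G_0)$; because $\gcd(j,|b|)=1$, the projection of $L$ to $\langle b\rangle$ is already $\langle b^j\rangle=\langle b\rangle$, so $\langle b\rangle\le L$ and $a^i\rho_0\in L$. Writing $a^{\rho_0}=a^t$ (so $\rho_0 a^{-i}=a^{-it^{-1}}\rho_0$) and using $a^{\tau_0}=a^{-1}$, one computes
\[
[a^i\rho_0,\tau_0]=\rho_0^{-1}a^{-2i}\rho_0^{\tau_0}=a^{-2it}[\rho_0,\tau_0].
\]
The key structural point is that $\Aut(\langle a\rangle)$ is abelian, so the map $H\to\Aut(\langle a\rangle)$ factors through $H/H'$; in particular $[\rho_0,\tau_0]$ centralizes $\langle a\rangle$. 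Therefore, raising to the $n$-th power where $n=|[\rho_0,\tau_0]|$ gives $[a^i\rho_0,\tau_0]^n=a^{-2itn}\in L$. Each of $2$, $i$, $t$, and $n$ is coprime to $|a|$ (the first because $a\in K$ has $\{2,3\}'$-order, the second by hypothesis, the third since conjugation is an automorphism of $\langle a\rangle$, the fourth since $n$ divides $|H|$). Thus $a\in L$, hence $\langle a\rangle\le L$, and combining with $\langle\rho_0,\tau_0\rangle=H$ gives $L\supseteq\langle a\rangle H=G_0$, so $L=G$.

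For necessity, assume $\langle\rho,\tau\rangle=G$. Projecting onto $G/K\cong H$ immediately gives $\langle\rho_0,\tau_0\rangle=H$. Projecting onto $G/\langle a\rangle\cong\langle b\rangle\times H$ and using that $\langle b\rangle$ is central and has order coprime to $|H|$ shows that the image $\langle b^j\rho_0,\tau_0\rangle$ equals $\langle b^j\rangle\times\langle\rho_0,\tau_0\rangle$, forcing $\gcd(j,|b|)=1$. The remaining two conditions concern $G_0=\langle a\rangle\rtimes H$; after possibly replacing $g$ by $gh$ for some $h\in H$ (which changes $(\rho_0,\tau_0)$ by conjugation inside $H$ and adjusts $i$ by a unit mod $|a|$) and by an element of $\langle a\rangle$ compatible with $H$, one reduces to showing that $\langle a^i\rho_0,\tau_0\rangle=G_0$ forces $\gcd(i,|a|)=1$ and $a^{\tau_0}=a^{-1}$. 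For $\gcd(i,|a|)=1$: if $i\in p\Z$ for some prime $p\mid|a|$, then every element of $\langle a^i\rho_0,\tau_0\rangle\cap\langle a\rangle$ has $p$-part trivial on the Sylow $p$-subgroup of $\langle a\rangle$, which is impossible if $L\cap\langle a\rangle=\langle a\rangle$. For $a^{\tau_0}=a^{-1}$: the main obstacle here is that $\tau_0$ a priori acts as some involution of $\Aut(\langle a\rangle)$, and different Sylow $p$-components of $\langle a\rangle$ could receive either trivial or inverting action. One uses the structural classification of $H$ from Lemma~\ref{lem:P1psolvable} together with $\langle\rho_0,\tau_0\rangle=H$ and $[\langle a\rangle,H]=\langle a\rangle$ to argue that the conjugacy class of $\tau_0$ in $H$ admits a representative inverting $\langle a\rangle$ precisely when $L=G$; replacing $g$ by $gh$ for such a representative completes the proof. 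The main technical difficulty is this last point, where the interplay between the possible shapes of $H$ and the action on $\langle a\rangle$ must be handled case by case.
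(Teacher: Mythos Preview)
Your opening reduction is incorrect: you claim that $\langle b\rangle\le Z(G)$ because ``$K=\langle a\rangle\times\langle b\rangle$ is abelian'', but the paper only gives $K=\langle a\rangle{:}\langle b\rangle$ with $\gcd(|a|,|b|)=1$; nothing forces $b$ to centralise $\langle a\rangle$. Indeed the decomposition $G=\langle a\rangle{:}(\langle b\rangle\times H)$ says only that $\langle b\rangle$ commutes with $H$; the element $b$ may act nontrivially on $\langle a\rangle$ (for a concrete model take $\langle a\rangle=\Z_{29}$, $\langle b\rangle=\Z_7$ acting by an order-$7$ automorphism, $H=\Z_2$ inverting $a$ and centralising $b$). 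Consequently the splitting $G=\langle b\rangle\times G_0$ that you use throughout both directions is unavailable, and with it the subgroup-splitting argument you invoke for sufficiency and for $\gcd(j,|b|)=1$. This is a genuine structural error, not a cosmetic one.

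Your treatment of the condition $a^{\tau_0}=a^{-1}$ is also problematic. You propose a case analysis on the isomorphism type of $H$ using Lemma~\ref{lem:P1psolvable}, but that lemma classifies $H$ only for $\mathscr{P}_1^+(p)$-groups, whereas the present lemma is stated in the generality of Lemma~\ref{solvable-gp-structure}(ii), i.e.\ for $\mathscr{P}_2^+(p)$-groups; so the case list need not be exhaustive here. The paper avoids any case analysis entirely: it uses the coprime-action decomposition $\langle a\rangle=[\langle a\rangle,\langle\tau_0\rangle]\times C_{\langle a\rangle}(\langle\tau_0\rangle)$, and if the centraliser $A_2$ is nontrivial it builds a normal subgroup $A_1C$ (with $C=C_H(A_2)\ni\tau_0$) such that $G/A_1C$ is cyclic, contradicting $[\langle a\rangle,H]=\langle a\rangle$. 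For the $\gcd$ conditions, the paper simply observes that $\langle a^ib^j\rho_0,\tau_0\rangle\le \langle a^i\rangle{:}(\langle b^j\rangle\times H)$ (a subgroup because $\langle a^i\rangle$ is characteristic in the normal subgroup $\langle a\rangle$), forcing $\langle a^i\rangle=\langle a\rangle$ and $\langle b^j\rangle=\langle b\rangle$ in one stroke; this replaces your projection arguments and does not require $b$ to be central.
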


\begin{proof}
 Recall that, $H$ is a Hall $\{2,3\}$-subgroup of $G$ and thus contains a Sylow $2$-subgroup of $G$. 
 By Sylow theorem, there is an element $g\in G$ such that $\tau^g \in H$. Set $\tau_0=\tau^g$.
 As $G=\l a\r\l b\r H$, there exist two integers $0\le i\le |a|-1$, $1\le j\le |b|-1$ such that $\rho^g=a^ib^j\rho_0$ for some $\rho_0\in H$. 
 It is necessary to show that the condition for $ \rho,\tau$ to be sufficient and necessary ensures the generation of the entire group $G$.

 We first prove the sufficiency. 
 By assumption, \[\l \rho, \tau\r=\l \rho^g,\tau^g\r^{g^{-1}}=\l a^ib^j\rho_0, \tau_0\r^{g^{-1}}.\]  Hence, we only need to show that $\l a^ib^j\rho_0, \tau_0\r=G$. It is easy to see that   \[\l a^ib^j\rho_0, \tau_0\r \l a\r/\l a\r=G/\l a\r.\] The statement holds if $ a\in \l  a^ib^j\rho_0, \tau_0\r$.
 Note that 
  \begin{align*}
    [a^ib^j\rho_0,\tau_0]&=[a^i,\tau_0]^{b^j\rho_0}[b^j\rho_0,\tau_0]=[a^i,\tau_0]^{b^j\rho_0}[b^j,\tau_0]^{\rho_0}[\rho_0,\tau_0] =(a^{-2i})^{b^j\rho_0}[\rho_0,\tau_0].
  \end{align*}
  Since $\Aut(\l a\r)$ is abelian, we have $[\rho_0,\tau_0]\in H'$ acts trivially on $\l a\r$. 
  Recall that $[\rho_0,\tau_0]$ is a $\{2,3\}$-element and $a$ is a $\{2,3\}'$-element. Thus
  \begin{equation}\label{eqn:ordercommutator}
  \l [a^ib^j\rho_0,\tau_0]\r=\l (a^{-2i})^{b^j\rho_0}\r\times \l [\rho_0,\tau_0]\r =\l a\r\times \l [\rho_0,\tau_0]\r.
  \end{equation}
  This gives $a \in \l a^ib^j\rho_0,\tau_0\r$ which implies $\l \rho,\tau\r=G$.

Now we prove the necessity. As $\l \rho,\tau\r=G$, we have
\[\l a^ib^j\rho_0,\tau_0\r=\l \rho^g, \tau^g\r=G^g=G.\] 
 Note that $H$ is a homomorphic image of $G$. 
 We have $\l \rho_0, \tau_0\r=H$. 
 Note that $\l \rho, \tau\r=\l a\r\l b\r H =\l a^ib^j\rho_0,\tau_0\r\leq \l a^i\r\l b^j\r H$. Note that $\l a\r, \l b\r, H$ are Hall subgroups of coprime order, which implies that $\l a^i\r=\l a\r$ and $\l b^j\r=\l b\r$. That is $\gcd(i, |a|)=1$ and $\gcd(j,|b|)=1$. 
 Now, we prove $a^{\tau_0}=a^{-1}$. Consider the coprime action of $\l \tau_0\r$ on $\l a\r$.  
 By \cite[p.198 8.4.2]{kurzweil2004theory}, 
 \[\l a\r =[\l a\r, \l \tau_0\r]{\times }C_{\l a\r}(\l \tau_0\r) =\l [a, \tau_0]\r{\times }C_{\l a\r}(\l \tau_0\r).\]
 If $\l [a, \tau_0]\r=\l a\r$, then, by Lemma~\ref{lem-meta-order}, $a^{\tau_0}=a^{-1}$. If $\l [a, \tau_0]\r<\l a\r$, then $ C_{\l a\r}(\l \tau_0\r) \neq 1$. 
 Denote $A_1=\l [a, \tau_0]\r$, $A_2=C_{\l a\r}(\l \tau_0\r)$, and set $C=C_H(A_2)$. It follows that $C\lhd N_H(A_2)=H$. Note that $C$ also centralizes $\l b\r$. 
 Hence $C\lhd A_2\l b\r H$ and $A_1C \lhd A_1A_2\l b\r H=G$. 
 Denote $\overline{G}=G/A_1C$. 
 Note that $\tau_0\in C_H(C_{\l a\r}(\l \tau_0\r))=C_H(A_2)=C$. 
 Then $\overline{G}$ is generated by the image of $a^ib^j\rho_0$ which means $\overline{G}$ is cyclic and $\overline{G}'=1$.  
  Recall that $[\l a\r, H]=\l a\r$. We have
  \[\overline{G}'\geq [\l a\r, H]A_1C/A_1C=\l a\r A_1C/A_1C\cong \l a\r/(\l a\r\cap A_1C)=\l a\r/A_1\cong A_2\ne 1,\]
  which is a contradiction. 
\end{proof}

Applying Lemma~\ref{lem-meta-order} and Lemma~\ref{gensofsolvalbegp}, we have the following corollary. 

\begin{corollary}\label{chisolvable}
    Let $G,H,a,b$ be  as described in Lemma~\ref{solvable-gp-structure}(ii). Suppose that $\calm=\Map(G,\rho, \tau)$. Then there exist $\rho_0, \tau_0\in H$ and two positive integers $i,j$, such that  $\calm \cong \Map(G, a^ib^j\rho_0, \tau_0)$. 
    Moreover, the type of $\calm$ is $(|a'|{\cdot} |b|{\cdot} |\rho_0|, 2{\cdot}|a|{\cdot} |[\rho_0,\tau_0]|)$,
    where $\l a'\r=C_{\l a\r}(\l b\rho_0\r)$. 
\end{corollary}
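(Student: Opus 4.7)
The plan is to derive the corollary by first applying Lemma~\ref{gensofsolvalbegp} to reduce to a canonical form for $(\rho,\tau)$, then computing the orders of the generator and commutator through the coprime decomposition of $\l a\r$ induced by $\l b\rho_0\r$. Specifically, Lemma~\ref{gensofsolvalbegp} applied to the rotary pair $(\rho,\tau)$ produces $g\in G$, elements $\rho_0,\tau_0\in H$, and integers $i,j$ with $\gcd(i,|a|)=\gcd(j,|b|)=1$, $\l\rho_0,\tau_0\r=H$ and $a^{\tau_0}=a^{-1}$, such that $\rho^g=a^ib^j\rho_0$ and $\tau^g=\tau_0$. Since conjugation by $g$ is an automorphism of $G$, Lemma~\ref{lem:fundamental}(iii) gives $\calm\cong\Map(G,a^ib^j\rho_0,\tau_0)$, which is the first assertion.

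For the valency, set $w=b^j\rho_0$. Because $\l b\r=C_K(H)$ centralises $\rho_0$ and $\gcd(|b|,|\rho_0|)=1$, one has $|w|=|b|\cdot|\rho_0|$ and $\l w\r=\l b\r\times\l\rho_0\r=\l b\rho_0\r$, so $C_{\l a\r}(w)=\l a'\r$. The coprime action of $\l w\r$ on the abelian normal subgroup $\l a\r$ splits it as $\l a\r=\l a''\r\times\l a'\r$ with $\l a''\r=[\l a\r,\l w\r]$, and $\l w\r$ acts fixed-point-freely on the cyclic group $\l a''\r$, forcing $\l a''\r=\l[a'',w]\r$. Write $a^i=a_1a_2$ with $a_1\in\l a''\r$ and $a_2\in\l a'\r$; since $\gcd(i,|a|)=1$, $a_1$ generates $\l a''\r$ and $a_2$ generates $\l a'\r$. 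As $\l a'\r$ is central in $S=\l a\r\l w\r$ (being a subgroup of the abelian group $\l a\r$ fixed pointwise by $w$), the decomposition $S=T\times\l a'\r$ holds with $T=\l a''\r{:}\l w\r$, so $a^iw=(a_1w)\cdot a_2$ is a commuting product. Lemma~\ref{lem-meta-order} applied in $T$ yields $|a_1w|=|w|=|b|\cdot|\rho_0|$, and coprimality of $|a'|$ with $|b|\cdot|\rho_0|$ (since $|a'|$ divides the $\{2,3\}'$-number $|a|$ which is coprime to both $|b|$ and $|\rho_0|$) then gives $|\rho^g|=|a_1w|\cdot|a_2|=|a'|\cdot|b|\cdot|\rho_0|$.

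The face length reads directly from equation~\eqref{eqn:ordercommutator} of the proof of Lemma~\ref{gensofsolvalbegp}, which asserts $\l[a^ib^j\rho_0,\tau_0]\r=\l a\r\times\l[\rho_0,\tau_0]\r$, and hence $m=2|[\rho,\tau]|=2|a|\cdot|[\rho_0,\tau_0]|$. The main obstacle is the valency computation, where one must combine the coprime-action splitting of $\l a\r$ with a careful application of Lemma~\ref{lem-meta-order} on the fixed-point-free piece $T$, and then assemble the two commuting factors using coprimality of their orders; once this bookkeeping is set up correctly, the face-length assertion is immediate.
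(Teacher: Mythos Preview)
Your proof is correct and follows essentially the same route as the paper: both invoke Lemma~\ref{gensofsolvalbegp} together with Lemma~\ref{lem:fundamental}(iii) for the first assertion, then split $\langle a\rangle$ under the coprime action of $\langle b\rho_0\rangle=\langle b^j\rho_0\rangle$, apply Lemma~\ref{lem-meta-order} to the non-centralised factor, and assemble the order by coprimality, while the face length is read off directly from equation~\eqref{eqn:ordercommutator}. The only cosmetic difference is that the paper records the final step as an $\operatorname{lcm}$ whereas you make the direct product $S=T\times\langle a'\rangle$ explicit.
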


\begin{proof}
 By Lemma~\ref{lem:fundamental} {\rm (iii)} and Lemma~\ref{gensofsolvalbegp}, we have $\calm\cong\Map(G, \rho^g, \tau^g)= \Map(G, a^ib^j\rho_0,\tau_0)$ for some $g\in G$, $\rho_0,\tau_0\in H$ and two integers $i,j$ where $\gcd(i,|a|)=\gcd(j,|b|)=1$. 
 Note that $a^ib^j\rho_0\in \l a\r{:}(\l b\r{\times}\l \rho_0\r)=\l a\r{:}(\l b\rho_0\r)$ and $b^j\rho_0$ is a generator of $\l b\rho_0\r$. 
 Consider the coprime action of $\l b\rho_0\r$ on $\l a\r$. 
 By  \cite[p.198 8.4.2]{kurzweil2004theory}, 
 \[ \l a\r =C_{\l a \r}(\l b\rho_0\r){\times} [\l a\r, \l b\rho_0\r].\]
 Assume that $a^i=a'a''$ where $a'\in C_{\l a \r}(\l b\rho_0\r)$, and
\[a''\in [\l a\r, \l b\rho_0\r]=[\l a^i\r, \l b\rho_0\r]=[\l a'a''\r, \l b\rho_0\r]= [\l a''\r, \l b\rho_0\r].\]
Since $\l a\r=\l a^i\r=\l a'a''\r$, we have $\l a'\r=C_{\l a \r}(\l b\rho_0\r)$ and $\l a''\r=[\l a''\r, \l b\rho_0\r]$. 
 By Lemma~\ref{lem-meta-order}, $|a''b^j\rho_0|=|b^j\rho_0|=|b|{\cdot}|\rho_0|$. 
 Thence 
 \[|a^ib^j\rho_0|=|a'a''b^j\rho_0|=\lcm(|a'|, |a''b^j\rho_0|)=|a'|{\cdot}|b|{\cdot}|\rho_0|.\]
 By the same argument used in Lemma~\ref{gensofsolvalbegp}, Equation~\eqref{eqn:ordercommutator} holds which gives 
 \[ |[a^ib^j\rho_0, \tau_0]|=|a|{\cdot} |[\rho_0,\tau_0]|.\]
 Thus the type of $\calm $ is $(|\rho|, 2|[\rho, \tau]|)=(|a'|{\cdot} |b|{\cdot} |\rho_0|, 2{\cdot}|a|{\cdot} |[\rho_0,\tau_0]|)$,
    where $\l a'\r=C_{\l a\r}(\l b\rho_0\r)$.
 \end{proof}

As shown in Corollary~\ref{chisolvable}, given a generating pair $(\rho_0, \tau_0)$ for the subgroup $H$, distinct choices of integers $i,j$ preserve the type  of $\Map(G, a^ib^j\rho_0, \tau_0)$. Therefore, when studying the types or Euler characteristics of such bi-rotary maps $\Map(G, a^ib^j\rho_0, \tau_0)$, we may fix a simple generating pair for $G$, namely $\rho = ab\rho_0$ and $\tau = \tau_0$. We call $(\rho,\tau)$ a \emph{standard rotary pair}.

We now prove Theorem~\ref{thm:main-result-solvable}, together with the type information and the choice of standard rotary pairs of the quotient map $\Map(G,\rho,\tau)$.
    
\begin{theorem}\label{thm:solv}
Let $\calm =\Map(X, x,y)$ be a bi-rotary map of Euler characteristic $-p^n$. Let $G=X/{\rm O}_p(X)$ and let $G$ be non-abelian. If $G$ is solvable, then  $G=\l a\r{:}(\l b\r{\times}H)$ where $H$ is a Hall $\{2,3\}$-subgroup of $G$, and $|a|, |b|, |H|$ are pairwise coprime. Moreover, the possible structure of $H$ and corresponding types $(\bar{k},\bar{m})$ of the quotient map $\calm/\O_p(X)$ are listed in Table~\ref{table solvable}, along with representative standard rotary pairs $(\rho, \tau)$.
\end{theorem}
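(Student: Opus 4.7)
The plan is to assemble the preceding lemmas and then perform a bookkeeping case analysis for the entries of Table~\ref{table solvable}. Since $\calm$ has Euler characteristic $-p^n$, the quotient group $G=X/\O_p(X)$ is a non-abelian solvable $\mathscr{P}_1^+(p)$-group with respect to $(\rho,\tau)=(x\O_p(X),y\O_p(X))$ and satisfies $\O_p(G)=1$. Hence Lemma~\ref{relationpi} gives that $G$ is a $\mathscr{P}_2^+(p)$-group, and Lemma~\ref{solvable-gp-structure}(ii) immediately yields the decomposition $G=\langle a\rangle\rtimes(\langle b\rangle\times H)$ with $H$ a Hall $\{2,3\}$-subgroup and $|a|,|b|,|H|$ pairwise coprime. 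Moreover, Lemma~\ref{lem:P1psolvable} restricts $H$ to one of the five types listed in the first column of Table~\ref{table solvable}.

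The remaining task is, for each such $H$, to produce a standard rotary pair and compute the type $(\bar k,\bar m)$ of $\calm/\O_p(X)=\Map(G,\rho,\tau)$. The starting point is Lemma~\ref{gensofsolvalbegp}: up to conjugation we may assume $\rho=a^ib^j\rho_0$, $\tau=\tau_0$ with $\rho_0,\tau_0\in H$, $\gcd(i,|a|)=\gcd(j,|b|)=1$, $\langle \rho_0,\tau_0\rangle=H$ and $a^{\tau_0}=a^{-1}$. Taking $i=j=1$ one obtains the \emph{standard} pair $(\rho,\tau)=(ab\rho_0,\tau_0)$. Corollary~\ref{chisolvable} then computes
\[
\bar k=|a'|\cdot|b|\cdot|\rho_0|,\qquad \bar m=2\cdot|a|\cdot|[\rho_0,\tau_0]|,
\]
where $\langle a'\rangle=C_{\langle a\rangle}(\langle b\rho_0\rangle)$. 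Writing $k_1=|\rho_0|$, $k_2=|b|$, $m_2=|a|$ and $m'=|a'|$ gives precisely the type entries in the table.

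For each row of Table~\ref{table solvable} I would now pick a concrete involution $\tau_0$ and a generator $\rho_0$ of the corresponding $H$, check that $\langle \rho_0,\tau_0\rangle=H$, verify $a^{\tau_0}=a^{-1}$ (note that in the $H\cong\Z_2^2{:}\Z_{3^e}$ row the chosen $\tau_0=d_1$ lies in $\O_2(H)$, which is why the extra condition $a=1$ is needed so that the $\tau_0$-action on $\langle a\rangle$ is vacuous), and read off $|[\rho_0,\tau_0]|$. In particular:
\begin{itemize}
\item if $H\cong \Z_{k_1}$ set $\rho_0$ a generator and $\tau_0=\rho_0^{k_1/2}$, forcing $p=k_1$ even so that $[\rho_0,\tau_0]=1$, giving $\bar m=2m_2$;
\item if $H\cong\Z_{k_1}\times\Z_2$ set $\tau_0$ the extra involution; then $p=2$;
\item if $H\cong\D_{2\cdot 3^e}$ there are two choices of generating pair, namely $(c,d)$ of type $(3^e,2\cdot 3^e)$ and $(cd,d)$ of type $(2,2\cdot 3^e)$, producing the two dihedral rows; the parity of $|\rho_0|$ and $|[\rho_0,\tau_0]|$ together with the prime-power condition on $\chi$ imposes $p=2$ or $p\neq 3$ as indicated;
\item the direct product $H\cong\Z_{2^f}\times\D_{2\cdot 3^e}$ is handled identically, with the extra cyclic factor absorbed into $\rho_0$;
\item finally, in $H\cong \Z_2^2{:}\Z_{3^e}$ the only generating pairs $(\rho_0,\tau_0)$ with $|\tau_0|=2$ yield $|\rho_0|=3^e$ and $|[\rho_0,\tau_0]|=2$, so $\bar k=3^ek_2$ and $\bar m=4$ with $p\neq 2$.
\end{itemize}
In each case I would finish by verifying that $|G|\bigl(\tfrac{1}{\bar k}-\tfrac{1}{2}+\tfrac{1}{\bar m}\bigr)$ is $(-1)\times$ a power of the distinguished prime, which fixes the "$p=2$", "$p\neq 3$", or "$a=1$" annotations in the rightmost column.

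The main obstacle is not conceptual but organisational: the case $H\cong\Z_2^2{:}\Z_{3^e}$ requires ruling out $|a|>1$, because any involution $\tau_0\in H$ is forced to lie in the Klein four subgroup $\O_2(H)$ and thus cannot invert $\langle a\rangle$ unless $a=1$ (Lemma~\ref{gensofsolvalbegp}), so the verification of $a^{\tau_0}=a^{-1}$ must be carried out carefully in this row. The other rows reduce to routine order computations using Lemma~\ref{lem-meta-order} and the fact that $\Aut(\langle a\rangle)$ is abelian, so that $[\rho_0,\tau_0]\in H'$ acts trivially on $\langle a\rangle$ and the commutator splits as in \eqref{eqn:ordercommutator}. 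The existence statement (that each row is actually realised) is handled by the explicit constructions in Section~\ref{sec:example}.
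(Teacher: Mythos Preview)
Your plan follows the paper's argument closely: reduce to a $\mathscr{P}_1^+(p)$-group via Lemma~\ref{relationpi}, apply Lemma~\ref{solvable-gp-structure}(ii) for the decomposition, invoke Lemma~\ref{lem:P1psolvable} to list $H$, and then use Lemma~\ref{gensofsolvalbegp} and Corollary~\ref{chisolvable} for the standard pairs and types. That part is fine and matches the paper.

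There is, however, a genuine gap in how you propose to establish the annotations in the last column of Table~\ref{table solvable}. You say you would finish each case by checking that $|G|\bigl(\tfrac{1}{\bar k}-\tfrac{1}{2}+\tfrac{1}{\bar m}\bigr)$ is $-1$ times a power of the distinguished prime. This criterion is invalid: the Euler characteristic of the quotient map $\Map(G,\rho,\tau)$ need \emph{not} be a prime power (the paper flags this explicitly in the Remark following the main theorems), so nothing can be read off from it. The paper instead argues upstairs in $X$. For the rows marked ``$p=2$'' (lines 2, 3, 5, 6) the point is that $\langle \rho_0,[\rho_0,\tau_0]\rangle$ has index $2$ in $H$, hence $\langle x,[x,y]\rangle$ has index $2$ in $X$, so $\mathcal{M}$ is orientable by Lemma~\ref{lem:fundamental}(i) and $\chi=-p^n$ is even. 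For the constraints ``$p\neq 3$'' and ``$p\neq 2$'' (lines 3--4 and line 7) one observes that $H'\cap\O_r(H)$ is a nontrivial normal $r$-subgroup of $G$ (for $r=3$, resp.\ $r=2$), contradicting $\O_p(G)=1$ were $p=r$. Your vague reference to ``parity'' does not capture either mechanism.

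A smaller omission: in the $\D_{2\cdot3^e}$ row with $(\rho_0,\tau_0)=(cd,d)$ the table records $\bar k=2k_2$ with no $m_2'$ factor, and this requires showing $C_{\langle a\rangle}(\langle bcd\rangle)=1$. The paper does this by noting that $c\in H'$ centralises $\langle a\rangle$ while $d$ inverts it, so $C_{\langle a\rangle}(\langle cd\rangle)=C_{\langle a\rangle}(\langle d\rangle)=1$. Your treatment of the $\Z_2^2{:}\Z_{3^e}$ row, by contrast, is correct: since $\tau_0\in\O_2(H)=H'$ centralises $\langle a\rangle$ while Lemma~\ref{gensofsolvalbegp} forces $a^{\tau_0}=a^{-1}$, and $|a|$ is odd, one gets $a=1$.
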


\begin{proof}
 By definition, $X$ is a $\P{0}{p}$-group with respect to $(x,y)$. 
 Thus, the quotient group $G$ is a non-abelian solvable $\mathscr{P}_1^+(p)$-group with respect to $(\rho, \tau)$ and  $\O_p(G)=1$. 
 Hence, by Lemma~\ref{solvable-gp-structure}, $G=\l a\r{:}(\l b\r{\times}H)$ where $H$ is a Hall $\{2,3\}$-subgroup of $G$, and $|a|, |b|, |H|$ are pairwise coprime. 
 By Lemma~\ref{lem:P1psolvable}, $H$ is isomorphic to one of the following groups: 
\[\Z_{k_1}, \ \Z_{k_1}{\times} \Z_2,\ \Z_{2^f}{\times}\operatorname{D}_{2{\cdot} 3^e}, \ \Z_{2}^2{:} \Z_{3^e}, \ \Z_2^2{:}\Z_{3^e}, \]
where $k_1, e,f$ are positive integers. 
 


 Now, we consider the standard rotary pairs and corresponding types for these cases. By  Corollary~\ref{chisolvable}, we only need to determine the generating pair $(\rho_0,\tau_0)$ of $H$ with $|\tau_0|=2$. Then the type of the quotient map $\calm/\O_p(X)$ is $(|a'|{\cdot} |b|{\cdot} |\rho_0|, 2{\cdot}|a|{\cdot} |[\rho_0,\tau_0]|)$ given by a standard rotary pair $(\rho,\tau)=(ab\rho_0,\tau_0)$. Denote
 \[  k_1=|\rho_0|,\  k_2=|b|, \ m_2=|a|\ \mbox{and}\ m_2'=|C_{\l a\r}(\l a^{-1}\rho\r )|.\]

 First, if $H$ is abelian and  $\tau_0\in \l \rho_0\r$, then $H=\l \rho_0\r$ and $\tau_0=\rho_0^{|\rho_0|}=\rho_0^{k_1/2}$. 
 By Corollary~\ref{chisolvable}, the type of $\calm/\O_p(X)$ is $(k_1k_2m_2', 2m_2)$.
 This gives line $1$ in Table~\ref{table solvable}. 
 If $H$ is abelian and   $\tau_0\notin \l\rho_0\r$, then  $H=\l \rho_0\r{\times}\l \tau_0\r$. 
 Again, by Corollary~\ref{chisolvable}, the type of $\calm/\O_p(X)$ is $(k_1k_2m_2', 2m_2)$. 
 Note that $\l \rho_0,[\rho_0,\tau_0]\r=\l \rho_0\r$ is an index two subgroup of $H$, and so is its preimage in $X$. By Lemma~\ref{lem:fundamental} {\rm (1)}, $\calm$ is orientable, and $p=2$.
 This gives line $2$ in Table~\ref{table solvable}. 

 Now, if $H=\l c\r{:}\l d\r\cong \D_{2{\cdot}3^e}$, then $H'\cap \O_3(H)=\Z_{3^e}\neq 1$ is also a normal subgroup of $G$ as $H'$ centralizes $\l a\r$. This gives $p\neq 3$. 
 Since $(\rho_0,\tau_0)$ is a generating pair of $H$ with $|\tau_0|=2$,   $|\rho_0|=3^e$ or $2$. 
 If $|\rho_0|=3^e$, without loss of generality, assume $(\rho_0,\tau_0)=(c,d)$. 
 The corresponding type of  $\calm/\O_p(X)$ is $(3^ek_2m_2', 2{\cdot}3^em_2)$. 
 By the same argument used in line $2$, $\calm$ is orientable, and $p=2$.
  This gives line $3$ in Table~\ref{table solvable}. 
 If $|\rho_0|=2$, without loss of generality, assume $(\rho_0,\tau_0)=(cd,d)$. 
 The corresponding type of  $\calm/\O_p(X)$ is $(2k_2m_2', 2{\cdot}3^em_2)$. Note that $\l a^{-1}\rho\r=\l b\rho_0\r=\l b\r{\times}\l cd\r$ and $c\in H'$ centralizes $\l a\r$. We have 
 \[m_2'=|C_{\l a\r}(\l a^{-1}\rho\r) |=|C_{\l a\r}(\l b\r{\times}\l cd\r) |\leq |C_{\l a\r}(\l cd\r)|=|C_{\l a\r}(\l d\r)|=1.\] 
 Hence $m_2'=1$, which gives line $4$ in Table~\ref{table solvable}.

 If $H=\l d_1\r {\times}\l c\r{:}\l d_2\r=\Z_{2^f}{\times}\D_{2{\cdot}3^e}$, then $H/H'\cong \Z_2^2$ is not cyclic. As $H$ is a homomorphic image of $X$, $X/X'$ is not cyclic either. Hence $p=2$, otherwise, $X/X'=\l x, X'\r /X'$ is cyclic, which is a contradiction. By a similar argument as above,   without loss of generality, assume $(\rho_0,\tau_0)=(cd_1,d_2)$ or $(cd_1d_2,d_2)$. The corresponding types of  $\calm/\O_p(X)$ are $(2^f3^ek_2m_2', 2{\cdot}3^em_2)$ or  $(2^fk_2m_2', 2{\cdot}3^em_2)$, respectively.  This gives lines $5$ and $6$ in Table~\ref{table solvable}. 

 Finally, if  $H=(\l d_1\r {\times}\l d_2\r){:}\l c\r=\Z_{2^2}{:}\Z_{3^e}$, then $H'\cap \O_2(H)=\Z_2^2$ is also a normal subgroup of $G$. Hence $p\neq 2$. By Lemma~\ref{gensofsolvalbegp}, we know that $a^\tau_0=a^{-1}$. On the other hand, $\tau_0\in H'$ should act trivially on $\l a\r$. As $|a|$ is odd, we have $a=1$. Without loss of generality, we may assume $(\rho_0, \tau_0)=(d, c_1)$.  The corresponding type of  $\calm/\O_p(X)$ is $(3^ek_2, 4)$. This gives line $7$ in Table~\ref{table solvable}. 
\end{proof}

\section{Non-Solvable Groups}\label{sec:non-solvable}

In this section, we prove Theorem~\ref{thm:main-non-solvable}. Following a similar approach to the preceding section, we define $\Map(X,x,y)$ as a bi-rotary map with Euler characteristic $-p^n$. Let $G = X/\O_p(X)$, and set $\rho = x\O_p(X)$ and $\tau = y\O_p(X)$. Consequently, $G$ is a non-solvable $\mathscr{P}_1^+(p)$-group with respect to the pair $(\rho, \tau)$, and  $\O_p(G) = 1$. 
We first present a lemma addressing the case where 
$G$ is a simple group.


\begin{lemma}\label{lem:simple-gps1}
Let $G$ be a non-abelian simple group, and let $p$ be a prime. 
\begin{itemize}
    \item [\rm (i)] If $G$ is a $\mathscr{P}_2(p)$-group,
then $G$ is one of the following groups:
$\PSL(2,p^t)$, $\PSL(2,r)$ for some prime $r$, ${\rm Sz}(q)$, $\A_7$, or $\J_1$.
    \item [\rm (ii)] If $G$ is a $\mathscr{P}_1(p)$-group, then $G=\PSL(2,q)$ where $q\ge 5$ is a prime, or $q=p^t\ge 4$ for some $t$.
\end{itemize}

\end{lemma}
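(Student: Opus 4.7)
The plan is to invoke the classification of finite simple groups (CFSG) and then impose the Sylow constraints required by $\mathscr{P}_2(p)$ and $\mathscr{P}_1(p)$.

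\emph{Part (i).} I would split on the parity of $p$. If $p$ is odd, then the Sylow $2$-subgroup of $G$ is cyclic or dihedral; a cyclic Sylow $2$-subgroup would force a normal $2$-complement by Burnside's transfer theorem (since $2$ is the smallest prime dividing $|G|$), incompatible with non-abelian simplicity, so it must be (non-trivial) dihedral. The Gorenstein--Walter theorem then yields $G \cong \PSL(2, q)$ with $q \geq 5$ odd, or $G \cong \A_7$. For $\PSL(2, q)$, the Sylow $r$-subgroup is cyclic when $r \neq \mathrm{char}(q)$ and is elementary abelian of order $q$ when $r = \mathrm{char}(q)$, so the $\mathscr{P}_2(p)$ condition forces $q$ prime or $q = p^t$; for $\A_7$, the Sylow $3$-subgroup is $\ZZ_3 \times \ZZ_3$, forcing $p = 3$. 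If instead $p = 2$, then since no dihedral group has odd order, every Sylow $r$-subgroup with $r$ odd must be cyclic. A CFSG-based enumeration then excludes $\A_n$ for $n \geq 6$ (whose Sylow $3$-subgroup contains $\ZZ_3 \times \ZZ_3$), all sporadic groups except $\J_1$ (by inspection of ATLAS Sylow data), every Lie-type group of rank $\geq 2$ (whose rank-$\geq 2$ maximal tori supply non-cyclic Sylow subgroups for several primes), and among rank-one Lie types, $\PSU(3, q)$ and $\Ree(q)$ by a direct Sylow computation (the latter having Sylow $3$-subgroup of order $q^3$). The survivors for $p = 2$ are $\PSL(2, 2^t)$, $\PSL(2, r)$ with $r$ prime, $\Sz(q)$, and $\J_1$, which together with the odd-$p$ case yields the stated list.

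\emph{Part (ii).} By Lemma~\ref{relationpi}(ii), any $\mathscr{P}_1(p)$-group is a $\mathscr{P}_2(p)$-group, so $G$ lies in the list from (i). It remains to rule out $\Sz(q)$, $\A_7$, and $\J_1$ by comparing the $p'$-part of $|G|$ with the largest achievable value of $\lcm(|H_1|, |H_2|)$ over cyclic or dihedral $H_1, H_2 \leq G$. For $\Sz(q)$ the odd part of $|G|$ factors as $(q-1)(q+\sqrt{2q}+1)(q-\sqrt{2q}+1)$, three pairwise coprime maximal cyclic-torus orders; but every cyclic or dihedral subgroup of $\Sz(q)$ has its large cyclic part contained in exactly one of these tori, so two such subgroups cannot simultaneously cover all three factors. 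For $\J_1$ the odd part is $3 \cdot 5 \cdot 7 \cdot 11 \cdot 19$, exceeding any $\lcm$ attainable from two cyclic or dihedral subgroups given the element-order spectrum $\{1,2,3,5,6,7,10,11,15,19\}$. For $\A_7$ with $p = 3$, the $3'$-part is $2^3 \cdot 5 \cdot 7 = 280$, which exceeds the $\lcm$-values available from cyclic/dihedral subgroups whose cyclic parts have orders at most $7$. Hence $G = \PSL(2, q)$ with $q \geq 5$ prime or $q = p^t \geq 4$, as claimed.

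The main obstacle is the CFSG enumeration in part (i) when $p = 2$, particularly the uniform exclusion of Lie-type groups of rank $\geq 2$. I would lean on the standard description of maximal tori in finite groups of Lie type: a rank-$\ell$ group has maximal tori of rank $\ell$, and for any odd prime dividing the order of such a torus with $\ell \geq 2$, the associated Sylow subgroup is non-cyclic. Making this watertight uniformly across twisted and untwisted families, and being careful with the rank-one exceptional types $\PSU(3, q)$ and $\Ree(q)$, is the delicate step; by contrast, the elimination of $\Sz(q)$, $\A_7$, and $\J_1$ in part (ii) is a short direct computation once the candidate list is available.
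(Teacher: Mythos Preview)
Your proposal is correct and well-structured. The principal difference from the paper lies in part~(i): you split on the parity of $p$ and invoke the Gorenstein--Walter theorem for odd $p$, which cleanly yields $\PSL(2,q)$ (with $q$ odd) or $\A_7$ without any CFSG case analysis; the paper instead runs a uniform family-by-family CFSG check (alternating, sporadic, exceptional Lie type, classical) regardless of $p$. Your approach buys a genuinely CFSG-free argument for half the problem, at the cost of still needing a CFSG enumeration for $p=2$; the paper's approach is more uniform but invokes CFSG throughout. For the $p=2$ Lie-type exclusion you propose using the rank of maximal tori to produce non-cyclic odd Sylow subgroups in rank $\geq 2$, whereas the paper forces $p$ to equal the defining characteristic via the unipotent radical and then, in characteristic~$2$, exhibits a subgroup isomorphic to $\PSL(3,4)$ or $\PSL(4,2)$ with non-cyclic Sylow $3$-subgroup. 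Both routes are standard; yours is arguably more systematic but, as you note, requires care across all twisted types.

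For part~(ii) your argument is essentially the paper's. One small point: your $\A_7$ elimination (``cyclic parts have orders at most $7$'') is a shade loose as stated, since bounding element orders by $7$ does not immediately bound $\lcm(|H_1|,|H_2|)$ below $280$ without saying which dihedral subgroups actually occur. The paper sharpens this by observing that if $7$ divides $|H_1|$ then $H_1\le N_{\A_7}(\Z_7)\cong\Z_7{:}\Z_3$, which has odd order, forcing $H_1=\Z_7$ and hence $40\mid|H_2|$; this is exactly the ``short direct computation'' you allude to, so no real gap.
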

\begin{proof}
We prove part {\rm (i)} by analyzing each family of finite simple groups. 

Assume that $G=\A_n$.
If $n\geqslant 8$, then a Sylow 3-subgroup of $G$ is not cyclic, and a Sylow 2-subgroup of $G$ is neither cyclic nor dihedral, which is a contradiction.
Thus $n\leqslant 7$. 
As $\A_5\cong\PSL(2,5)$ and $\A_6\cong\PSL(2,9)$, part {\rm (i)} holds for alternating groups.

If $G$ is a sporadic simple group, then $G=\J_1$ by inspecting the sporadic simple group in the Atlas.

Assume that $G$ is an exceptional simple group of Lie type of characteristic $r$.
Then Sylow $r$-subgroups of $G$ are neither cyclic nor dihedral.
Thus $p=r$ and any other Sylow subgroups are cyclic or dihedral.
It follows that $G={\rm Sz}(q)$ with $p=2$.

Finally, suppose that $G$ is a classical simple group of Lie type of dimension $n$ and characteristic $r$.
Suppose that $n\geqslant3$, and $G\not=\PSL(3,2)\cong \PSL(2,7)$.
Then a Sylow $p$-subgroup of $G$ is neither cyclic nor dihedral, implying that $p=r$.
If $p=2$, then $G$ contains a subgroup $H$ which is isomorphic to $\PSL(3,4)$ or $\PSL(4,2)$, and a Sylow 3-subgroup of $H$ is not cyclic, which is a contradiction.
If $p$ is odd, then a Sylow $2$-subgroup of $G$ is neither cyclic nor dihedral, again a contradiction.
Therefore, it must be that $n=2$.
If $G=\PSL_2(q)$ with $q=r^t$ odd and $t>1$, then a Sylow $r$-subgroup of $G$ is neither cyclic nor dihedral, and so $p=r$.
We thus conclude that $G=\PSL(2,r)$ or $\PSL(2,p^t)$.

By Lemma~\ref{relationpi}, to prove part (ii), we only need to examine the groups listed in part (i).
We shall prove that ${\rm Sz}(q)$, $A_7$ and $\J_1$ are not $\mathscr{P}_1(p)$-groups. 

Let $|G|=p^n\lcm(|H_1|,|H_2|)$ where $H_1$ and $H_2$
are cyclic or dihedral. 
If $G=\A_7$, then by Lemma~\ref{relationpi}, we have $p=3$ as a Sylow $3$-subgroup of $\A_7$ is not cyclic. 
We then have that \[7{\cdot} 5{\cdot} 3^2{\cdot} 2^3=|\A_7|=3^n\cdot\lcm(|H_1|,|H_2|),\] 
so $7\mid |H_1|\mbox{ or }|H_2|$. 
Without loss of generality, assume $7\mid |H_1|$ and let $x\in H_1$ be an element of order $7$.
Since $H_1$ is cyclic or dihedral,
we have $H_1\le N_G(\l x\r)$.
Notice that $N_G(\l x\r)\cong \mathrm{Z}_7{:}\mathrm{Z}_3$. 
We have $H_1$ cannot be dihedral, which implies that $H_1=\l x\r\cong\mathrm{Z}_7$.
Therefore $2^3\cdot 5\mid |H_2|$.
Since $H_2$ is either cyclic or dihedral, 
$H_2$ has an element of order $20$, which is impossible for $\A_7$.
Assume $G={\rm Sz}(q)$, where $q=2^{2t+1}$ for some integer $t$. Then $p=2$ as Sylow $2$-subgroup of ${\rm Sz}(q)$ is not cyclic nor dihedral. Therefore, 
\begin{equation}\label{equ:suz}
    q^2(q^2 + 1)(q-1)=2^n\lcm(|H_1|,|H_2|).
\end{equation}
By \cite[ XI, Theorem~3.10]{huppert1982Finite}, for any element in ${\rm Sz}(2^{2t+1})$, its order divides $4\mbox{ or }2^{2t+1}-1$ or $2^{2t+1}\pm 2^{t+1}+1$. 
Hence, ${\rm Sz}(q)$ does not satisfy Equation (\ref{equ:suz}) for any cyclic or dihedral subgroups $H_1$ and $H_2$.
If $G=\J_1$, then $p=2$ as Sylow $2$-subgroup of $\J_1$ is neither cyclic nor dihedral. 
Then we have 
\begin{equation*}
    2^3{\cdot} 3 {\cdot} 5 {\cdot} 7 {\cdot} 11 {\cdot} 19=2^n\lcm(|H_1|,|H_2|).
\end{equation*} 
According to \cite{wilsonATLAS}, the order of any element in the group $\J_1$ must be one of $1,2,3,5,6,7,$ $10,11,15,19$. 
Consequently, there do not exist suitable subgroups $H_1$ and $H_2$ in $\J_1$ that satisfy the aforementioned equation. This implies that $\J_1$ is not a $\mathscr{P}_1(p)$-group.
\end{proof}

By applying Lemma~\ref{lem:simple-gps1}, we can establish a characterization of the structure of non-solvable $\mathscr{P}_1^+(p)$-groups $G$ with $\O_p(G)=1$ in the following lemma.

\begin{lemma}\label{lem:almostsimple}
Let $G$ be a non-solvable $\mathscr{P}_1^+(p)$-group with respect to $(\rho,\tau)$, where $p$ is a prime. Assume $\O_p(G)=1$. Set $R=\rad(G)$ and  $D=G^{(\infty)}$. Then the following statements hold:
\begin{itemize}
    \item [\rm (i)] The factor group $G/R$ is almost simple. Specifically, $G/R\cong \PSL(2,q).\Z_f$ for some integer $f$, where either $q\ge 5$ is a prime or $q=p^t\ge 4$ for some integer $t$.
    \item [\rm (ii)] The intersection $R\cap D$ is trivial. It follows that $D\cong \PSL(2,q)$ and $G=(R\times D).\Z_f$ where $q$ and $f$ are described in {\rm (i)}.
\end{itemize}

\end{lemma}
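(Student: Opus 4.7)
My plan is to analyze the socle of $G/R$ for part (i), and then use Schur multiplier considerations to force $R\cap D=1$ for part (ii).

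For part (i), Corollary~\ref{cor:p1+} shows $G/R$ is a $\mathscr{P}_1^+(p)$-group, and the non-solvability of $G$ makes its solvable radical trivial. Hence the socle $S$ of $G/R$ is a direct product $T_1\times\cdots\times T_k$ of non-abelian simple groups. Each $T_i$, being normal in $S$, is a $\mathscr{P}_1(p)$-group by Lemma~\ref{lem:p1}(i), and Lemma~\ref{lem:simple-gps1}(ii) then forces $T_i\cong\PSL(2,q_i)$ for some $q_i$ of the stated form. To exclude $k\ge 2$, I would pick a prime $r\in\{2,3\}$ with $r\ne p$: since both $2$ and $3$ divide $|\PSL(2,q)|$ for every $q\ge 2$, a Sylow $r$-subgroup of $T_1\times T_2$ is a direct product of two non-trivial cyclic, elementary abelian, or dihedral $r$-groups, which is neither cyclic nor dihedral, contradicting the $\mathscr{P}_2(p)$-property of $S$. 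Hence $k=1$ and $S\cong\PSL(2,q)$. Almost simplicity follows from the standard observation that the socle centralizer $C_{G/R}(S)$ is normal in $G/R$ with $C_{G/R}(S)\cap S=Z(S)=1$, and any non-trivial such $C_{G/R}(S)$ would supply a minimal normal subgroup of $G/R$ outside $S$, contradicting that $S$ is the socle; so $G/R$ embeds in $\Aut(\PSL(2,q))$, and one records $G/R=\PSL(2,q).\Z_f$ with $f=[G/R:S]$.

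For part (ii), set $K=D\cap R$, a solvable normal subgroup of $G$. Since $\PSL(2,q)$ is perfect, $DR/R=(G/R)^{(\infty)}=S$, so $D/K\cong\PSL(2,q)$. I claim $K=1$. Suppose for contradiction $K\ne 1$, and let $M$ be a minimal $G$-normal subgroup contained in $K$; then $M$ is elementary abelian of exponent $\ell$ for some prime $\ell$, and $\ell\ne p$ because $\O_p(G)=1$. Since $M$ lies in a Sylow $\ell$-subgroup of $G$, which is cyclic or dihedral by the $\mathscr{P}_2(p)$-property, either $|M|=\ell$, or $\ell=2$ and $|M|=4$; in these cases $\Aut(M)$ is $\Z_{\ell-1}$ or $\S_3$ respectively, each of which admits only the trivial perfect subgroup. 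Hence the conjugation action of the perfect group $D$ on $M$ is trivial, giving $M\le Z(D)$ and in particular $Z(D)\ne 1$.

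The final step rules out $Z(D)\ne 1$ via the Schur multipliers of $\PSL(2,q)$. When $q=2^t$ with $t\ge 3$, the multiplier is trivial, an immediate contradiction. When $q=4$, we have $p=2$, but $M\le Z(D)\le\Z_2$ forces $\ell=2=p$. When $q$ is odd with $q\ne 9$, $Z(D)\le\Z_2$: if $p=2$ then $\ell=2=p$ again, while if $p$ is odd, $D$ must be the unique non-trivial perfect central extension $\SL(2,q)$, whose Sylow 2-subgroup is generalized quaternion of order at least $8$, and the Sylow 2-subgroup of $G$ is cyclic or dihedral (neither containing a generalized quaternion subgroup). Finally when $q=9$ we have $p=3$, and the candidate perfect covers are $2\!\cdot\!\A_6$, $3\!\cdot\!\A_6$, and $6\!\cdot\!\A_6$; the $3$-cover is killed by $\ell=3=p$, while the $2$- and $6$-covers are killed by the Sylow 2 argument above. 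Thus $K=1$, so $D\cong\PSL(2,q)$ and $DR=D\times R$ inside $G$, yielding $G=(R\times D).\Z_f$ as claimed. The main obstacle is this final Schur-multiplier case analysis, especially $q=9$ with its three candidate central extensions; the recurring structural input is that generalized quaternion 2-groups are neither cyclic nor dihedral, so cannot fit inside a Sylow 2-subgroup of a $\mathscr{P}_2(p)$-group when $p$ is odd.
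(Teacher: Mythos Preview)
Your argument for part~(i) follows the paper's route almost exactly, but you omit one point that the paper does prove: that $(G/R)/S$ is \emph{cyclic}, so that the ``$.\Z_f$'' in the statement is justified. When $q=p^t$ with $p$ odd and $t$ even, $\Out(\PSL(2,q))\cong\Z_2\times\Z_t$ is not cyclic, so almost simplicity alone does not give cyclicity of the quotient. The paper uses the $\mathscr{P}_1^+(p)$ generating condition: for $p$ odd one has $G=\langle[\rho,\tau],\rho\rangle$, whence $(G/R)/S$ is generated by the image of $\rho$ (since $[\rho,\tau]\in(G/R)'=S$ as $\Out(S)$ is abelian); for $p=2$ the relevant $\Out(S)$ is already cyclic. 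You should insert this step.

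In part~(ii) there is a genuine gap. You show that a single minimal $G$-normal subgroup $M\le K$ satisfies $M\le Z(D)$, and then immediately invoke Schur multipliers as if $D$ were a perfect central extension of $\PSL(2,q)$. But that requires $K\le Z(D)$, not merely $M\le Z(D)$; without this, $D/Z(D)$ need not be simple and the multiplier bound does not apply. The paper closes this gap via the Fitting subgroup: each $\O_r(D)$ with $r\ne p$ is cyclic or dihedral (by the $\mathscr{P}_2(p)$ property), so $\Aut(\operatorname{F}(D))$ is solvable and the perfect group $D$ centralises $\operatorname{F}(D)$; then $\operatorname{F}(D)\le Z(D)\le K$ and, since $K$ is solvable with $\operatorname{F}(K)\le\operatorname{F}(D)$, one gets $K=C_K(\operatorname{F}(K))\le\operatorname{F}(K)\le\operatorname{F}(D)$, forcing $K=\operatorname{F}(D)=Z(D)$. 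Your minimal-normal idea can also be repaired: iterating it along a $G$-chief series of $K$ shows $D$ centralises every chief factor, so the image of $D$ in $\Aut(K)$ lies in the (nilpotent) stability group of the series and is therefore trivial, giving $K\le Z(D)$. Once $K=Z(D)$ is in hand, your Schur-multiplier case analysis (including the $q=9$ treatment) is correct and matches the paper's conclusion.
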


\begin{proof}
Let $S=\mathrm{soc}(G/R)$.
Since $G$ is a $\mathscr{P}_1^+(p)$-group, we have that $S$
is a $\mathscr{P}_1(p)$-group by Lemma~\ref{relationpi} {\rm (i)} and Lemma~\ref{lem:p1}.
Note that $S$ is a direct product of simple groups $T_i$. By Lemma~\ref{lem:p1} {\rm (ii)} and Lemma~\ref{lem:simple-gps1}, $$S\cong\PSL(2,q_1)\times\dots\times\PSL(2,q_\ell),$$ where $q_j$ is either a prime or of the form $q_j=p^{t_j}$ for some integer $t_j$,  for all $j\in\{1,\ldots,\ell\}$. 
If $\ell>1$, then the Sylow $3$-subgroup of $S$ is not cyclic since $3$ divides $ |\PSL(2,q_j)|$ for all $j$. Similarly,  the Sylow $2$-subgroup of $S$ is neither cyclic nor dihedral. This contradicts the fact that $G$ is a $\mathscr{P}_2(p)$-group, by Lemma~\ref{relationpi}. 
Hence $\ell=1$ and $S\cong \PSL(2,q)$ is the unique minimal normal subgroup of $G/R$, where $q=p^t$ or $q$ is a prime. In particular,  $G/R$ is almost simple. Set $a=\rho R$ and $b=\tau R$. Then   $G/R=\l a,b\r$ as $G=\l \rho, \tau\r$.
If $p$ is odd, by the definition of $\P{1}{p}$-group, $G=\l [\rho, \tau], \rho\r$. It follows that  $G/R=\l[a,b],a \r$. Since $\Out(S)$ is abelian, we have $[a,b]\in (G/R)'=S$, and $(G/R)/S$ is cyclic. This gives $G/R=S.\Z_f$ for some integer $f$.
If $p=2$, then $S\cong\PSL(2,2^t)$ or $S\cong\PSL(2,r)$ for some prime $r\neq 2$. In each case, $\Out(S)$ is cyclic and we also have that $G/R=S.\Z_f$ for some integer $f$. Thus, part (i) holds.

Now, let $M=R\cap D$. As  $\O_p(D)\le\O_p(G)=1$, the Fitting subgroup $\operatorname{F}(D)$ of $D$ satisfies
\[\operatorname{F}(D)=\O_{p_1}(D)\times\cdots\times\O_{p_i}(D)\] 
for some primes $p_1,\ldots,p_i$ that are coprime to $p$.
By N-C Lemma, 
\[D/C_D(\operatorname{F}(D))\lesssim \Aut(\O_{p_1}(D))\times\cdots\times\Aut(\O_{p_i}(D)).\]
Since $G$ is a $\mathscr{P}_1(p)$-group, we have that $D$ is a $\mathscr{P}_2(p)$-group by Lemma~\ref{relationpi} and Lemma~\ref{lem:p2}, so $\O_{p_1}(D),\ldots,\O_{p_i}(D)$ are all cyclic or dihedral.
Then \[\Aut(\O_{p_1}(D))\times\cdots\times\Aut(\O_{p_i}(D))\] is solvable, which implies that $D/C_D(\operatorname{F}(D))$ is solvable. Since $D$ is perfect, we have $D=C_D(\operatorname{F}(D))$ and $\operatorname{F}(D)\le Z(D)$. Note that $Z(D)$ is a solvable normal subgroup of $G$. Hence,  $Z(D)\le R$, we have that $\operatorname{F}(D)\le Z(D)\le R\cap D=M$. Since $M\lhd D$, the Fitting subgroup $\operatorname{F}(M)$ of $M$ is a subgroup of $\operatorname{F}(D)$. Thus 
\[ M =C_M(\operatorname{F}(D))\leq C_M(\operatorname{F}(M))\leq \operatorname{F}(M)\leq M,\]
since $M\leq R$ is solvable. This gives \[\operatorname{F}(M)= M \leq \operatorname{F}(D)\leq M. \]
It follows that $M=\operatorname{F}(D)=Z(D)$. Note that 
\[D/M\cong DR/R=(G/R)^{(\infty)}\cong S\cong \PSL(2,q),\]
and $D$ is a perfect group. By part (i), $D$ is a quasi-simple group with $D/Z(D)\cong \PSL(2,q)$, where either $q\ge 5$ is a prime or $q=p^t\ge 4$ for some integer $t$.
By \cite[Section 3.3.6]{wilson-gtm251} and $\O_p(D)=1$, either $M=1$ and $D\cong \PSL(2,q)$,  or $M\cong \Z_2$, and $D\cong \SL(2,q)$. 
If the latter case holds, $M$ is the unique subgroup of order $2$ in $D$, and $q\geq 5$ is an odd prime. Moreover, the Sylow subgroup of $\SL(2,q)$ is neither cyclic nor dihedral, which implies $p=2$. But \[M=\O_2(D)=\O_p(D)\leq \O_p(G)=1,\] leading to a contradiction. Therefore, we have $1=M=D\cap R$,
and  $RD=R\times D$.
Furthermore, as  $G/RD\cong (G/R)/(RD/R)\cong\Z_f$, we have $G=(R\times D).\Z_f$ and part (ii) holds.
\end{proof}

For a $\mathscr{P}_1^+(p)$-group $G$ with $\O_p(G)=1$, Lemma~\ref{lem:almostsimple} tells us that if $G$ is non-solvable, then $G$ has precisely one non-abelian simple composition factor, namely $\PSL(2,q)$. In particular, $G/\rad(G)$ is an almost simple $\mathscr{P}_1^+(p)$-group with socle $\PSL(2, q)$, while $G/G^{(\infty)}$ is a solvable $\mathscr{P}_1^+(p)$-group respectively. 
To understand the group structure of $G$, a natural approach is to first determine the structure of the almost simple $\mathscr{P}_1^+(p)$-group $G/\rad(G)$, and then use the result of Theorem~\ref{thm:main-result-solvable} to further restrict the structure of $G$. We establish a useful property of $\PSigL(2,q)$ in Lemma~\ref{lem:glcentralizer}. This will be employed to characterize the structure of $G/\rad(G)$ in Lemma~\ref{lem:M/R}. Using these results and Theorem~\ref{thm:main-result-solvable}, we further analyze the structure of $G$ in Lemma~\ref{lem:M/DP} and Corollary~\ref{cor:restriction-M/D}.



\begin{lemma}\label{lem:glcentralizer}
    Let $q$ be a prime power such that $q\ge 4$, and let $G=\mathrm{P\Sigma L}(2,q)$.
    If $g\in \mathrm{PSL}(2,q)$ and $|g|=\frac{q\pm 1}{\gcd(2,q-1)}$,
    then $C_G(\l g\r)=\l g\r$.
\end{lemma}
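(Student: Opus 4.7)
The plan is to split the proof into two stages. First I would show that within $\mathrm{PSL}(2,q)$ itself the centralizer of $g$ is already $\langle g\rangle$; then I would bootstrap to $\mathrm{P\Sigma L}(2,q)$ by ruling out any contribution from non-trivial field automorphisms. Writing $G = \mathrm{PSL}(2,q)\rtimes \langle\phi\rangle$, where $\phi$ is a Frobenius generator of $\mathrm{Gal}(\mathbb{F}_q/\mathbb{F}_p)$ with $q = p^f$, lets me decompose any centralizing element uniquely as $u = y\phi^i$ with $y\in\mathrm{PSL}(2,q)$ and $0\le i<f$, so that the task becomes forcing $i=0$ together with the $\mathrm{PSL}$-centralizer computation.

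For the first stage, an element $g$ with $|g| = (q\pm 1)/\gcd(2,q-1)$ generates a cyclic maximal torus $T$ of $\mathrm{PSL}(2,q)$, split or non-split according to the sign. By Dickson's classification of subgroups of $\mathrm{PSL}(2,q)$, $T$ is self-centralizing and its normalizer satisfies $N_{\mathrm{PSL}(2,q)}(T)/T \cong \mathbb{Z}_2$, with the non-trivial coset acting on $T$ by inversion. Hence $C_{\mathrm{PSL}(2,q)}(g) = \langle g\rangle$ and the only $\mathrm{PSL}(2,q)$-conjugates of $g$ inside $\langle g\rangle$ are $g$ and $g^{-1}$. For the second stage, the condition $[u,g]=1$ rewrites as $\phi^i(g) = y^{-1}gy$, so that $g$ and $\phi^i(g)$ are $\mathrm{PSL}(2,q)$-conjugate. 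Choosing a convenient representative of $g$ — a diagonal matrix $\mathrm{diag}(\alpha,\alpha^{-1})$ in the split case, or the image of a norm-one element of $\mathbb{F}_{q^2}^\times$ in the non-split case — a direct computation shows $\phi$ acts as $p$-th power on $T$, so $\phi^i(g) = g^{p^i}$. Combining this with the Weyl-group remark yields $g^{p^i} = g^{\pm 1}$, equivalently $|g|\mid p^i\mp 1$.

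The main obstacle is the concluding arithmetic step: verifying that for $0 < i < f$ no such divisibility can hold, given $|g| = (p^f\pm 1)/\gcd(2,q-1)$. The key estimate is the strict inequality $p^i\mp 1 < (p^f\pm 1)/\gcd(2,q-1)$ when $i\le f-1$, which reduces to elementary bounds such as $p^{f-1}+1 < (p^f-1)/2$ in the odd characteristic and analogous ones for $p=2$, with the smallest exceptional $(p,f)$ pairs requiring direct inspection. Once $i$ is forced to be $0$, $u$ lies in $\mathrm{PSL}(2,q)$ and the first stage identifies $u\in\langle g\rangle$, yielding $C_G(\langle g\rangle) = \langle g\rangle$.
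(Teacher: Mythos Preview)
Your approach differs from the paper's: you reduce to the Galois action on the torus and an arithmetic divisibility, whereas the paper writes down explicit matrix models of the normalizer in $\Gamma\mathrm{L}(2,q)$ (resp.\ $\Gamma\mathrm{U}(2,q)$) and asserts, after ``a simple computation'', that the centralizer already lies in $\mathrm{PGL}(2,q)$. Your route is more transparent and makes the obstruction visible.

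That transparency, however, exposes a genuine gap which is not just bookkeeping. Your inequality $p^i\mp 1 < (p^f\pm 1)/\gcd(2,q-1)$ fails in the split case for $(p,f)=(2,2)$ and $(3,2)$: one has $3\mid 2^1+1$ and $4\mid 3^1+1$. In each instance the Frobenius $\phi$ \emph{inverts} $g$, so composing $\phi$ with a Weyl-group representative produces an element of $\mathrm{P}\Sigma\mathrm{L}(2,q)\setminus\mathrm{PSL}(2,q)$ centralizing $g$. Concretely, $C_{S_5}((1\,2\,3))\cong\mathbb{Z}_6$ for $q=4$ and $C_{S_6}((1\,2\,3\,4)(5\,6))\cong\mathbb{Z}_4\times\mathbb{Z}_2$ for $q=9$, so the lemma is \emph{false} as stated for these $q$ with the minus sign. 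There is a further failure at $q=5$: the split torus has order $2$, hence $\Aut(T)=1$ and $C_{\mathrm{PSL}(2,5)}(g)=N_{\mathrm{PSL}(2,5)}(\langle g\rangle)\cong V_4$, so your first-stage claim that $T$ is self-centralizing already breaks down there.

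Thus the ``direct inspection'' you defer would uncover counterexamples rather than complete the proof. You should either restrict the statement (excluding $q\in\{4,5,9\}$ with the split sign), or check that the application in Lemma~\ref{lem:M/R} never invokes these cases --- for odd $p$ the parity argument there forces $f=1$ before the lemma is used, which sidesteps $q=9$, and similar care handles $q=4,5$.
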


\begin{proof}
    Let $d=\gcd(2,q-1)$, and let $q=p^t$ for some integer $t\geq 1$. 
    By \cite[II, Theorem~8.3-8.4]{huppert1967Endliche}, $N_{\PSL(2,q)}(\l g\r)\cong \D_{2 |g|} $.
    Also, by \cite[II, Theorem~8.5]{huppert1967Endliche}, all the cyclic subgroups of order $| g|$ in $\PSL(2,q)$ are conjugate. Hence $$C_{\PGaL(2,q)}(\l g\r)\leq N_{\PGaL(2,q)}(\l g\r)= N_{\PSL(2,q)}(\l g\r). (\Z_2\times \Z_{t}).$$
    If $|g|=\frac{q-1}{d}$, let 
    \[N=\left\l \begin{pmatrix}
        \omega& \\ & 1
    \end{pmatrix},\begin{pmatrix}
        1& \\ & \omega
    \end{pmatrix},\begin{pmatrix}
        & 1\\ 1& 
    \end{pmatrix},\gamma \right\r \leq \GaL(2,q). \]
 where $\omega$ is a generator of $\mathbb{F}_q^{\times}$
 and $\gamma$ is the field automorphism.
 Then the image $\bar{N}$ of $N$ under the natural homomorphism $\GaL(2,q)$ to $\PGaL(2,q)$ includes a normal subgroup of order $\frac{q-1}{2}$ contained in $\PSL(2,q)$.
 Therefore, $\bar{N}$ is conjugate to $N_{\PGaL(2,q)}(\l g\r)$.
 A simple computation shows that $C_{\PGaL(2,q)}(\l g\r)\le \PGL(2,q)$ and 
 \[ C_{\PSigL(2,q)}(\l g\r)\le \PGL(2,q)\cap \PSigL(2,q)=\PSL(2, q).\]
 This implies that $C_{\PSigL(2,q)}(\l g\r)=\l g\r$.
If $|g|=\frac{q+1}{d}$, let 
    \[N=\left\l \begin{pmatrix}
        \omega^{q-1}& \\ & 1
    \end{pmatrix},\begin{pmatrix}
        1& \\ & \omega^{q-1}
    \end{pmatrix},\gamma \right\r \leq \Gamma \mathrm{U}(2,q). \]
 where $\omega$ is a generator of $\mathbb{F}_{q^2}^{\times}$
 and $\gamma$ is the field automorphism.
 Then the image $\bar{N}$ of $N$ under the natural homomorphism $\Gamma \mathrm{U}(2,q)$ to $\PGaL(2,q)\cong\PGaU(2,q)$ includes a normal subgroup of order $\frac{q+1}{2}$ contained in $\PSL(2,q)$.
 Therefore, $\bar{N}$ is conjugate to $N_{\PGaL(2,q)}(\l g\r)$.
 Analogously, we have that $C_{\PSigL(2,q)}(\l g\r)=\l g\r$. This completes the proof.
\end{proof}

\begin{lemma}\label{lem:M/R}
    Let $G$ be an almost simple $\mathscr{P}_1^+(p)$-group with respect to $(\rho,\tau)$. Then one of the following holds:
    \begin{itemize}
        \item [\rm(i)] $q=p^t\ge 4$ and $G\cong \mathrm{PSL}(2,q)$ or $\mathrm{PSL}(2,2^t).\operatorname{Z}_2$;
        \item [\rm (ii)] $p>2$ and $G\cong \mathrm{PSL}(2,q)$
        where $q=2p^t\pm 1 \ge 5$ is a prime;
        \item [\rm (iii)] $p=2$ and $G\cong \mathrm{PSL}(2,q)$ or $ \mathrm{PGL}(2,q)$ where $q\ge 5$ is a Mersenne prime or a Fermat prime.
    \end{itemize}
\end{lemma}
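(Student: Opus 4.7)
The plan is to combine the skeleton $G\cong\PSL(2,q).\Z_f$ from Lemma~\ref{lem:almostsimple}, in which $q=p^t\ge 4$ or $q\ge 5$ is a prime and $\Z_f\le\Out(\PSL(2,q))$, with a case analysis on $(q,f)$. Set $H_1=\l\rho\r$ and $H_2=\l\tau,\tau^\rho\r$; the $\mathscr{P}_1^+(p)$-condition then forces $|G|=p^n\cdot\lcm(|H_1|,|H_2|)$ with each $H_i$ cyclic or dihedral. The first step is to use the $\mathscr{P}_2(p)$-condition (Lemma~\ref{relationpi}) to restrict $f$. For $p$ odd, the Sylow $2$-subgroup of $\PGL(2,q)$ is semidihedral of order at least $16$ whenever the $2$-part of $q^2-1$ exceeds $8$, ruling out diagonal extensions apart from the solvable $\PGL(2,3)\cong\S_4$ and $\PGL(2,5)\cong\S_5\cong\PSigL(2,4)$ (the latter absorbed by the $p=2$ case). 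An analogous argument for a field automorphism of prime order $r$ exploits the fact that its fixed subgroup on a Sylow $r$-subgroup of $\PSL(2,p^t)$ is only the Sylow $r$-subgroup of $\PSL(2,p^{t/r})$, so the resulting semidirect product yields a non-cyclic, non-dihedral Sylow $r$-subgroup of $G$ whenever that torus is nontrivial. Together these observations force $f=1$ for $p$ odd and $f\le 2$ for $p=2$.

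Next, I would apply Dickson's classification of subgroups of $\PSL(2,q)$ together with Lemma~\ref{lem:glcentralizer} to enumerate cyclic and dihedral subgroups: cyclic subgroups have order dividing $q$, $(q-1)/d$, or $(q+1)/d$, and dihedral subgroups have order dividing $2(q-1)/d$ or $2(q+1)/d$, where $d=\gcd(2,q-1)$. When $q$ is a prime different from $p$, the normalizer of a Sylow $q$-subgroup in $G$ is a Frobenius group $\Z_q{:}\Z_{(q-1)/d}$ that contains no dihedral subgroup of order $2q$, so any cyclic or dihedral subgroup of $G$ meeting the Sylow $q$-subgroup nontrivially must equal $\Z_q$ itself. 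Since $H_2$ is generated by two involutions and is therefore properly dihedral, this forces $H_1=\Z_q$. Substituting into the defining identity and matching $|H_2|$ with a maximal dihedral order yields $(q\pm 1)/2=p^n$, i.e.\ $q=2p^n\pm 1$. For $p$ odd this is case~(ii); for $p=2$ it gives $q=2^{n+1}\pm 1$, a Mersenne or Fermat prime, producing case~(iii) for both $f=1$ and $f=2$. The defining-characteristic case $q=p^t$ does not trigger this Sylow-$q$ collision and is realized directly by a split torus and its dihedral normalizer, yielding case~(i).

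The main obstacle I expect is the field-automorphism analysis for $p$ odd: rigorously showing that every $\PSL(2,p^t).\Z_f$ with $f>1$ violates $\mathscr{P}_2(p)$ demands a precise Sylow comparison between $\PSL(2,p^t)$ and $\PSL(2,p^{t/r})$ together with explicit knowledge of how the field automorphism acts on a maximal torus. A secondary subtlety is excluding $\PGL(2,5)\cong\S_5$ for $p=5$: since $[\rho,\tau]\in G^{(\infty)}=\PSL(2,5)\cong\operatorname{A}_5$ has order in $\{1,2,3,5\}$ while $|\rho|\le 6$ in $\S_5$, the value $\lcm(|\rho|,2|[\rho,\tau]|)$ is at most $30$, so $|G|/\lcm$ can never be a nontrivial power of $5$. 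This commutator-order argument is the recurring device for disposing of low-rank sporadic overlaps that Sylow considerations alone cannot eliminate.
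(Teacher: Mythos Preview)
Your plan contains a genuine error that undermines the core reduction step. The Sylow $2$-subgroups of $\PGL(2,q)$ for $q$ odd are \emph{dihedral}, not semidihedral: $\PGL(2,q)$ contains the maximal subgroups $\D_{2(q-1)}$ and $\D_{2(q+1)}$, and a Sylow $2$-subgroup sits inside whichever of these has larger $2$-part. Consequently the $\mathscr{P}_2(p)$-condition does \emph{not} exclude the diagonal extension $\PGL(2,q)$ for odd $p$, and your first step---bounding $f$ purely through Sylow structure---fails outright. A second slip occurs in the Dickson analysis: for $q\equiv 1\pmod 4$ the Borel subgroup $\Z_q{:}\Z_{(q-1)/2}$ of $\PSL(2,q)$ \emph{does} contain $\D_{2q}$ (the involution in the diagonal torus, coming from an element of order $4$ in $\FF_q^\times$, acts by $-1$ on the unipotent radical), so ``any cyclic or dihedral subgroup meeting the Sylow $q$-subgroup nontrivially must equal $\Z_q$'' is false, and the deduction $H_1=\Z_q$ is unjustified as stated.

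The paper avoids both problems by never trying to bound $f$ via $\mathscr{P}_2(p)$. Instead it writes the $\mathscr{P}_1^+$-relation as the single divisibility equation
\[
\frac{q(q+1)(q-1)f}{d\cdot\lcm(|\rho|,2|[\rho,\tau]|)}=p^n,
\]
and analyses it directly. When $p\nmid q$ (so $q$ is prime) one lists the possible divisors for $|\rho|$ and $|[\rho,\tau]|$ among $q,(q\pm1)/d$ and checks that only $I\mid q(q\pm1)$ survives; for $f=1$ this yields $(q\mp1)/2=p^n$, while for $f=2$ it yields $q\mp1=p^n$, which forces $p=2$ because $q$ is odd. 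When $q=p^t$ the paper isolates the $p'$-part of the equation, deduces that $|\rho^f|_{p'}=(q\pm1)/d$, and then invokes Lemma~\ref{lem:glcentralizer}: since $\rho\in C_{\PSigL(2,q)}(\langle\rho^f\rangle)=\langle\rho^f\rangle$, one gets $\rho\in\PSL(2,q)$ and hence $f\le |\tau|\le 2$ (and $f=1$ for $p$ odd after checking $f_{2'}=1$). This centralizer argument is precisely what replaces your proposed Sylow comparison for field automorphisms---the step you yourself flagged as the main obstacle---and it dispatches the diagonal case simultaneously rather than requiring a separate (and, as it turns out, unavailable) Sylow obstruction.
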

\begin{proof}
    By Lemma~\ref{lem:almostsimple}, $G \cong  \PSL(2,q).\Z_f$ for some integer $f$.
    Let $\omega=[\rho,\tau]=\tau^\rho\tau \in G'\cong \PSL(2,q)$. 
    By Lemma~\ref{relationpi} {\rm (i)} and Lemma~\ref{lem:p1}, 
    $|G|=p^n\lcm(|\l \rho\r|,|\l \tau^\rho, \tau \r|)$ for some integer $n$. 
    Note that $|\l \tau^\rho, \tau \r|=2|\omega|$.
    Therefore, 
    \begin{equation}\label{equ:psl}
        \frac{q(q+1)(q-1)f}{d I}=p^n,
    \end{equation}
    where $d=\gcd(2,q-1)$, $I=\lcm(|\rho|, 2|\omega|)$. Keep in mind that $q, (q+1)/2, (q-1)/2$ are pairwisely coprime. 
    
    If $p\nmid q$, then by Lemma~\ref{lem:almostsimple},
    we have that $q\ge 5$ is a prime. This implies that $f\le 2$
    and $d=2$.
    As $\omega\in\PSL(2,q)$, 
    we have $|\omega|$ divides one of the integers  $ q, (q+1)/2,$ or $(q-1)/2$.
    If $f=1$, then $G\cong\PSL(2,q)$, which also  implies that
    $|\rho| $ divides one of the integers  $ q, \frac{q+1}{2},$  $\frac{q-1}{2}$.
    Therefore, $I$ divides one of the integers $2q, q+1, q-1,$$q(q+1), q(q-1) $ or $(q+1)(q-1)/2$. 
    If $I$ divides $2q, q+1$ or $q-1$, then the left side of Equation \eqref{equ:psl} cannot be a prime power.
    If $I$ divides $(q+1)(q-1)/2$, then the left side of Equation \eqref{equ:psl} is a multiple of $q$, which is not divisible by $p$.
    Hence, $I$ divides $q(q+1)$ or $q(q-1) $, and 
    we have that either $\frac{q-1}{2}=p^n$ or $\frac{q+1}{2}=p^n$.
    Therefore, $q=2p^n\pm 1$ and either part (ii) or part (iii) holds.  
    In particular, if $p=2$, then $q$ is a Mersenne prime or a Fermat prime and part (iii) holds.
    If $f=2$, then $G\cong\PGL(2,q)$ which implies
    that $|\rho|$ divides one of the integers $q, q+1,\mbox{ or }q-1$. 
    Therefore, we also have $I$ divides one of the integers $2q, q+1, q-1,$$q(q+1), q(q-1) $ or $(q+1)(q-1)/2$. 
    Using the same analysis as above, we have $q\pm 1=p^n$. 
    Recall that $q\ge 5$ is an odd prime.
    The number $p^n$ is even, which implies that $p=2$ and 
    $q=2^n\pm 1$, which is either a Mersenne prime or a Fermat prime. In this case, part (iii) holds. 
    
    Now we go to the case $p\mid q=p^t$.
    If $p$ is odd, then $G=\l \rho,\omega\r$ and $d=2$.
    Since $\omega\in G'\cong\PSL(2,q)$, we have that $G=\PSL(2,q)\l \rho\r$. 
    By Equation \eqref{equ:psl}, 
    we have
    \begin{equation}
        \label{equ:psl1}
        \frac{(q+1)(q-1)f_{p'}}{2I_{p'}}=1
    \end{equation}
    As $\omega, \rho^f\in \PSL(2,q)$,
    we have that 
    $|\omega|$ divides one of integers $p,\frac{q+1}{2}\mbox{ or }\frac{q-1}{2}$, and 
     $|\rho|$ divides one of integers $pf, \frac{(q+1)f}{2}\mbox{ or }\frac{(q-1)f}{2}$.
    Note that $I_{p'}=\lcm(|\rho|_{p'},(2|\omega|)_{p'})$.
    Therefore, 
    $I_{p'}$ divides one of integers\[f_{p'},\frac{(q+1)f_{p'}}{2},\frac{(q-1)f_{p'}}{2}, \mbox{ or } \frac{(q+1)(q-1)f_{p'}}{2}.\]
    This implies that $I_{p'}=\frac{(q+1)(q-1)f_{p'}}{2}=2|\rho|_{p'} |\omega|_{p'}$, 
    where $|\rho|_{p'}=\frac{(q\pm 1)f_{p'}}{2}$ and $2|\omega|_{p'}=(q\mp 1)$.
    If $2\mid f_{p'}$, then $2$ is a common divisor of $|\rho|_{p'}$ and $2|\omega|_{p'}$ which leads to the contradiction $I_{p'}\leq \frac{2|\rho|_{p'} |\omega|_{p'}}{2}=\frac{I_{p'}}{2}< I_{p'}$. Hence $f$ is odd and $\rho\in \mathrm{P\Sigma L}(2, q)$. By Lemma~\ref{lem:glcentralizer}, $C_G(\l \rho^f\r) =\l \rho^f\r$. 
    As $\rho\in C_G(\l\rho^f\r)=\l \rho^f\r $, we have $f=1$ and part (i) holds. 
    Now suppose that $p=2$. Then $d=1$, and Equation \eqref{equ:psl} implies
     \begin{equation}
        \label{equ:psl2}
        (q+1)(q-1)f_{2'}=I_{2'}
    \end{equation}
    Let $\l \rho'\r =\l \rho\r \cap \PSL(2,q)$. Note that $\l \rho, \tau\r=G\leq {\rm P\Gamma L}(2, q)={\rm P\Sigma L}(2, q)=\PSL(2,q).\Z_t$ and $\tau^2=1$. We have $|\rho|_{2'}=f_{2'}|\rho'|_{2'}$. 
    As $\omega, \rho'\in \PSL(2,q)$,
    we have that 
   both $|\omega|$ and $|\rho'|$ divides one of the integers $2,q+1, q-1$.
   Since $I_{2'}=\lcm(|\rho|_{2'},(2|\omega|)_{2'}) =\lcm(f_{2'}|\rho'|_{2'},|\omega|_{2'}) =(q+1)(q-1)f_{2'}$, we have 
   $|\rho'|_{2'}= q\pm 1$ and $|\omega|_{2'}= q\mp 1$. Note that $\rho\in C_G(\l \rho' \r)=C_{{\rm P \Sigma L}(2, q)}(\l \rho'\r)$. By Lemma~\ref{lem:glcentralizer}, $\rho \in \l \rho'\r $ which gives $\rho \in \PSL(2,q)$. Therefore $G/\PSL(2,q)\lesssim \l \tau\r\cong \Z_2$. This gives $f\leq 2$ and part (i) holds. We conclude the proof.
\end{proof}

\begin{lemma}
    \label{lem:M/DP}
    Let $G$ be a non-solvable $\mathscr{P}_1^+(p)$-group with respect to $(\rho,\tau)$ and let $\O_p(G)=1$.
    Let $D=G^{(\infty)}$ and $R=\rad(G)$.
    Then one of the following holds:
    \begin{itemize}
        \item [\rm (i)] $p$ is odd and $G/D\cong R$ is a cyclic group of odd order;
        \item [\rm (ii)] $p=2$ and $G/D\cong R$;
        \item [\rm (iii)] $p=2$ and $G/D\cong R\times \Z_2$;
        \item [\rm (iv)] $p=2$, $G/D\cong R.\Z_2$ and $\O_2(G/D)=1$.
    \end{itemize}
\end{lemma}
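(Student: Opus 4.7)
The plan is to assemble the conclusion from the structural information already supplied by Lemmas~\ref{lem:almostsimple} and \ref{lem:M/R}, combined with a short calculation that identifies $\O_2(G/D)$ when $p=2$. By Lemma~\ref{lem:almostsimple}, $G=(R\times D).\Z_f$ with $R\cap D=1$, so the canonical map embeds $R$ into $G/D$ as a normal subgroup of index $f$, giving $G/D\cong R.\Z_f$. Lemma~\ref{lem:M/R} then bounds $f\le 2$ and forces $p=2$ whenever $f=2$. Thus items (i)--(iv) correspond respectively to the four possibilities ($p$ odd), ($p=2,\ f=1$), ($p=2,\ f=2,\ \O_2(G/D)\ne 1$), and ($p=2,\ f=2,\ \O_2(G/D)=1$).

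For $p$ odd one has $f=1$, hence $G=R\times D$ and $G/D\cong R$. I would first prove $|R|$ is odd. Writing $T_R,T_D$ for Sylow $2$-subgroups of $R$ and of $D\cong\PSL(2,q)$ (with $q\ge 5$ odd, by Lemma~\ref{lem:M/R}), the Sylow $2$-subgroup of $G$ is $T_R\times T_D$, and this must be cyclic or dihedral by the $\mathscr{P}_2(p)$-property of $G$ (Lemma~\ref{relationpi}). Since $T_D$ contains a copy of $\Z_2^2$, a non-trivial $T_R$ would make $T_R\times T_D$ contain $\Z_2^3$, which no cyclic or dihedral $2$-group admits; hence $T_R=1$, and $|R|$ is odd. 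Then $G/D\cong R$ is $\mathscr{P}_1^+(p)$-generated by $(\rho D,\tau D)$ with $|\tau D|\le 2$ dividing $|R|$, whence $\tau D=1$ and $R=\langle\rho D\rangle$ is cyclic of odd order, giving case (i). The case $p=2,\ f=1$ simply records $G/D\cong R$, which is case (ii).

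The remaining case $p=2,\ f=2$ hinges on the single observation $\O_2(M)\cap R=1$, where $M:=G/D$. Indeed $\O_2(R)$ is characteristic in the normal subgroup $R$ of $G$, so $\O_2(R)\le\O_2(G)=1$; and $\O_2(M)\cap R$ is a normal $2$-subgroup of $R$, hence lies in $\O_2(R)=1$. This forces $\O_2(M)$ to inject into $M/R\cong\Z_2$. If $\O_2(M)=1$ we land in case (iv). Otherwise $\O_2(M)\cong\Z_2$ and $\O_2(M)R=M$; as $R$ and $\O_2(M)$ are both normal in $M$ with trivial intersection, $M=R\times\O_2(M)\cong R\times\Z_2$, which is case (iii). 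The only delicate point in the plan is the Sylow $2$-rank argument in the $p$ odd case; once that is settled, the dichotomy between (iii) and (iv) follows immediately from the identity $\O_2(M)\cap R=1$.
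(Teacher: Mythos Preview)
Your proposal is correct and follows essentially the same route as the paper: invoke Lemma~\ref{lem:almostsimple} to get $G=(R\times D).\Z_f$ with $R\cap D=1$, use Lemma~\ref{lem:M/R} on the almost simple quotient $G/R$ to force $f\le 2$ (and $f=1$ when $p$ is odd), then for $p=2$ split on whether $\O_2(G/D)$ is trivial, using $\O_2(R)\le\O_2(G)=1$ to see that $\O_2(G/D)$ meets the image of $R$ trivially. Your Sylow $2$-rank argument for $|R|$ odd when $p$ is odd is a slightly more explicit version of the paper's one-line appeal to the $\mathscr{P}_2(p)$-property, but the substance is identical.
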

\begin{proof}
    By Lemma~\ref{lem:almostsimple}, we have $G=(R\times D). \Z_f$ and $G/D=\PSL(2,q).\Z_F$ is an almost simple group. Note that $G/D$ is a $\mathscr{P}_1^+(p)$-group. By Lemma~\ref{lem:M/R}, $f\leq 2$ when $p=2$ and $f=1$ when $p>2$. 

    If $p$ is odd, then $G= R\times D$, where 
    $D=\PSL(2,q)$ for some $q$.
    Since $G$ is a $\mathscr{P}_2^+(p)$-group with respect to $(\rho,\tau)$, we have that 
    $|R|_{p'}$ is coprime to $|D|_{p'}$.
    By the fact that $2\mid |D|$, $|R|$ is odd. Moreover, since $R\cong G/D$, we have that $R$ is also a $\mathscr{P}_2^+(p)$-group with respect to $(\rho D,\tau D)$.
    This implies that $\tau \in D$ and $R$ is a cyclic group of odd order, and we have {\rm (i)}.

    If $p=2$, then
    $G/D\cong R\mbox{ or }R.\Z_2$.
    Let $O:=\O_2(G/D)$.
    Since $\O_2(R)=1,$ we have that $O\cap (RD/D)\lesssim \O_2(R)=1$.
    Then either $O=1\mbox{ or }\Z_2$.
    If $O=1$, then we have {\rm (ii)} and {\rm (iv)}.
    If $O=\Z_2$, then we have $G/D\cong R\times O\cong R\times \Z_2$, which is {\rm (iii)}.
\end{proof}

Theorem~\ref{thm:main-non-solvable} now follows from Lemmas \ref{lem:almostsimple}, \ref{lem:M/R}, and \ref{lem:M/DP}.

\begin{proof}[Proof of Theorem~\ref{thm:main-non-solvable}]
  It follows immediately from Lemma~\ref{lem:almostsimple} and Lemma~\ref{lem:M/R} that $G=(R\times D).\Z_f$, $f\leq 2$ and $D\cong\PSL(2,q)$ for some $q$. We now prove the ``moreover'' part of the theorem.
    
    If $p$ is odd, then according to Lemma~\ref{lem:M/R}, we have $f=1$ and $q=2p^t\pm 1$ or $q=p^t$ for some $t$.
    By Lemma~\ref{lem:M/DP}, $R$ is a cyclic group of odd order, and  {\rm (i)} holds.

    Now, suppose $p=2$. Then by Lemma~\ref{lem:M/R}, either $q=2^t$ or $q$ is a Mersenne prime or a Fermat prime. That is $q\in \num$. If $f=1$, then {\rm (ii)} holds
    If $f=2$, then by Lemma~\ref{lem:M/DP}, $G/R\cong\PSL(2,q).\Z_2$, and either $G/D=R\times \Z_2$ or $\O_2(G/D)=1$. If $\O_2(G/D)=1$, then {\rm (iv)} holds.  If $G/D=R\times \Z_2$, then $G$ has a normal subgroup $N$ such that $N\cap R=1$ and $G/N= R $. This gives $G=R\times N=R\times (\PSL(2,q). \Z_2)$ and $q\in \num$. Thus,  {\rm (iii)} holds. \end{proof}

\begin{corollary}\label{cor:restriction-M/D}
    Using the notation in Theorem~\ref{thm:main-non-solvable},
    we have that: 
    \begin{itemize}
        \item [\rm (i)] if $p>2$, then $G=D\times \Z_\ell$ and $\gcd(\ell,|D|)=1$;
        \item [\rm (ii)] if $p=2$, then $G/D=\l a\r{:}(\l b\r \times H)$ where $H=\Z_{2^e}$ or $\Z_{2^e}\times \Z_2$ and $|a|, |b|, |D|$ are pairwise coprime.  
    \end{itemize}
\end{corollary}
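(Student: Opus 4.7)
The plan is to combine the structural information from Theorem~\ref{thm:main-non-solvable} with the $\mathscr{P}_2^+(p)$-property of $G$ (supplied by Lemma~\ref{relationpi}) and the hypothesis $\O_p(G)=1$, thereby constraining which primes can simultaneously divide $|R|$ and $|D|$ and fixing the structure of $G/D$. I would handle $p>2$ and $p=2$ separately; for $p=2$ the three cases (ii)--(iv) of Theorem~\ref{thm:main-non-solvable} need to be distinguished.

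For part~(i), Theorem~\ref{thm:main-non-solvable}(i) already yields $G=R\times D$ with $R$ cyclic of odd order; set $\ell=|R|$. From $\O_p(R)\le\O_p(G)=1$ I obtain $p\nmid\ell$. For any odd prime $r$ dividing $\ell$, if $r$ also divided $|D|$ then a Sylow $r$-subgroup of $G=R\times D$ would contain the direct product of a nontrivial cyclic Sylow $r$-subgroup of $R$ and a nontrivial Sylow $r$-subgroup of $D$; being an abelian non-cyclic group of odd order, this is neither cyclic nor dihedral, contradicting the $\mathscr{P}_2^+(p)$-property. Together with $\ell$ odd this forces $\gcd(\ell,|D|)=1$.

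For part~(ii), put $p=2$ and $P=G/D$. The same Sylow argument, applied to the elementwise commuting subgroups $R$ and $D$ of the normal subgroup $R\times D\lhd G$, yields $|R|$ odd (from $\O_2(R)\le\O_2(G)=1$) and $\gcd(|R|,|D|)=1$; in particular $3\nmid|R|$, since $3\mid|\PSL(2,q)|$ for every admissible $q$. Consequently the Hall $\{2,3\}$-subgroup of $P$ has order at most $2$. By Corollary~\ref{cor:p1+}, $P$ is a $\mathscr{P}_1^+(2)$-group with respect to $(\rho D,\tau D)$. In cases~(ii) and~(iv) of Theorem~\ref{thm:main-non-solvable} one has $\O_2(P)=1$, so when $P$ is non-abelian I would apply Lemma~\ref{solvable-gp-structure}(ii) directly to $P$ to obtain $P=\langle a\rangle{:}(\langle b\rangle\times H)$ with $H\in\{1,\Z_2\}$; the abelian degeneration in case~(ii) forces $\tau D=1$ and $P$ cyclic of odd order, which fits the same form with $\langle b\rangle=H=1$. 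In case~(iii), $P=R\times\Z_2$; since $\tau D$ has order at most $2$ and $|R|$ is odd, $\tau D$ is forced into the $\Z_2$-factor, so $R\cong P/\Z_2$ is generated by the single image of $\rho D$ and is cyclic, whence $P=\langle a\rangle\times\Z_2=\langle a\rangle{:}(\langle b\rangle\times H)$ with $\langle b\rangle=1$ and $H=\Z_2$. Pairwise coprimality of $|a|,|b|,|D|$ then follows from $\gcd(|a|,|b|)=1$ supplied by Lemma~\ref{solvable-gp-structure} (or trivially when $\langle b\rangle=1$) together with $\gcd(|R|,|D|)=1$.

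The main obstacle I anticipate is case~(iii) of Theorem~\ref{thm:main-non-solvable}, where $\O_2(P)\ne1$ and Lemma~\ref{solvable-gp-structure} cannot be invoked directly. The resolution is the simple observation that $\tau D$ is forced entirely into the $\Z_2$-factor by the parity of $|R|$, which collapses $R$ itself to a cyclic group and trivialises the desired decomposition.
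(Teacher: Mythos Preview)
Your argument for part~(i) is fine and matches the paper. The problem is part~(ii): the claim that $|R|$ is odd is false, and the stated justification ``$\O_2(R)\le\O_2(G)=1$'' does not give it. Having $\O_2(R)=1$ only says $R$ has no nontrivial normal $2$-subgroup; a solvable group such as $\D_{10}$ satisfies this and is still of even order. In fact one of the paper's own examples (the one with $X=\PSL(2,8)\times\Z_{k'}\times(\Z_5{:}_{\varphi}\D_{16})$) yields, after quotienting by $\O_2(X)$, a group $G$ with $R=\Z_{k'}\times\D_{10}$, which has even order. Your Sylow argument cannot help here either, because the $\mathscr{P}_2(2)$-property imposes no restriction on Sylow $2$-subgroups. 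Once ``$|R|$ odd'' falls, so do the downstream claims: the Hall $\{2,3\}$-subgroup of $P=G/D$ need \emph{not} have order at most $2$, and in case~(iii) of Theorem~\ref{thm:main-non-solvable} the element $\tau D$ is \emph{not} forced into the $\Z_2$-factor, so you cannot conclude $R$ is cyclic.

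The fix is to drop the parity claim altogether. What you do get from the $\mathscr{P}_2(2)$-property of $R\times D$ and $3\mid|D|$ is $3\nmid|R|$, hence $3\nmid|G/D|$; thus the Hall $\{2,3\}$-subgroup $H$ of $G/D$ is a $2$-group. In cases~(ii) and~(iv) of Theorem~\ref{thm:main-non-solvable} you have $\O_2(G/D)=1$, so Lemma~\ref{solvable-gp-structure} applied to $G/D$ gives $G/D=\langle a\rangle{:}(\langle b\rangle\times H)$, and part~(i) of that lemma pins down $H\cong\Z_{2^e}$ or $\Z_2\times\Z_{2^e}$ (not just $1$ or $\Z_2$). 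In case~(iii) the obstacle $\O_2(G/D)\ne1$ is bypassed differently from your proposal: since $G=R\times(D.\Z_2)$, the quotient $R\cong G/(D.\Z_2)$ is itself a $\mathscr{P}_2^+(2)$-group with $\O_2(R)=1$, so Lemma~\ref{solvable-gp-structure} applies to $R$ and yields $R=\langle a\rangle{:}(\langle b\rangle\times H')$; then $G/D=\langle a\rangle{:}(\langle b\rangle\times(H'\times\Z_2))$ and one checks $H=H'\times\Z_2$ is still of the required shape. The coprimality of $|a|,|b|$ with $|D|$ then follows from the $\mathscr{P}_2(2)$-property of $R\times D$ as you indicated.
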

\begin{proof}
    If $p>2$, then it follows from Theorem~\ref{thm:main-non-solvable} that $G=D\times\Z_\ell$.
    Note that $p\nmid \ell$ as $\O_p(G)=1$.
    Since $G$ is a $\mathscr{P}_2(p)$, we have that $\gcd(\ell,|D|)=1$.

    If $p=2$, then it follows from Theorem~\ref{thm:main-non-solvable} that $G=R\times D$ or $(R\times D).\Z_2$
    and $D\cong\PSL(2,q)$. Note that $G/D$ is a $\mathscr{P}_2^+(2)$-group and $R\times D$ is a $\mathscr{P}_2(2)$-group.
    Since $3\mid|D|$, we have that $3\nmid |R|$, which implies that $3\nmid|G/D|$. 
    Lemma~\ref{lem:M/DP} shows that either $\O_2(G/D)=1$ or $G/D\cong R\times\Z_2$ where $\O_2(R)=1$.
    If $\O_2(G/D)=1$, then by Lemma~\ref{solvable-gp-structure}, $G/D=\l a\r{:}(\l b\r\times H)$, where $|a|, |b|,6$ are pairwisely coprime and $H$ is a Hall $\{2,3\}$-group of $G/D$. In particular, $H$ is a $2$-group. That is $H=\Z_{2^e}$ or $\Z_{2^e}\times \Z_2$. Moreover, as $R\times D$ is a $\mathscr{P}_2(2)$-group, we have $|a|, |b|,|D|$ are pairwisely coprime. If $G/D\cong R\times\Z_2$ and $\O_2(R)=1$, then $R\cong G/(D.\Z_2)$ is also a  $\mathscr{P}_2^+(2)$-group. Again,  by Theorem~\ref{solvable-gp-structure},  we have  $R=\l a\r{:}(\l b\r\times H')$, where $|a|, |b|,|D|$ are pairwisely coprime and $H'=\Z_{2^e}$ or $\Z_{2^e}\times \Z_2$. Therefore $G/D=\l a\r{:}(\l b\r \times H') \times \Z_2$. Set $H=H'\times\Z_2$. Then $H\cong (G/D)/(\l a\r{:}\l b\r)$ is also $2$-generated which implies $H=\Z_{2^e}$ or $\Z_{2^e}\times \Z_2$. This completes the proof.    
\end{proof}

\section{Examples of Bi-rotary maps}\label{sec:example}

In this section, we provide explicit examples of bi-rotary maps with Euler characteristics that are negative prime powers corresponding to each case in Theorems~\ref{thm:main-result-solvable} and \ref{thm:main-non-solvable}. In particular, for Theorem~\ref{thm:main-result-solvable} (i.e., the solvable case), we provide an infinite family for each case listed in Table~\ref{table solvable}. For comprehensive details, see the next subsections.

\subsection{\texorpdfstring{Examples of bi-rotary maps with $\chi=-p^n$: solvable cases}{Examples of solvable cases }}
In this subsection, we provide infinite families of examples corresponding to each of the seven lines listed in Table~\ref{table solvable}.

\begin{example}
 For any positive integer $f$, let $m_2=(23^{6f-5}+3)/2$. Thus $m_2/13$ is an integer which is coprime to $2{\cdot}3{\cdot}13{\cdot}23$. Now, set $X=(\Z_{13}{\times}\Z_{m_2/13}){:}\Z_6=(\l a_1\r{\times}\l a_2\r){:}\l x\r$, where $a_1^x=a_1^4$, $a_2^x=a_2^{-1}$. Set $y=a_1a_2x^3$. Then the bi-rotary map $\Map(X,x,y)$ is of Euler characteristic
 \[\chi =6m_2(\frac{1}{6}-\frac{1}{2}+\frac{1}{2m_2})=3-2m_2=-23^{6f+3}.\]
 Moreover, we have $\O_{23}(X)=1$ and the Hall $\{2,3\}$-subgroup of $X/\O_{23}(X)$ is isomorphic to $\l \bar{x}\r \cong \Z_6$. 
 This gives examples of Line~1 in Table~\ref{table solvable}. \qed
\end{example}

\begin{example}
 For any positive integer $f$, let $m_2=(2^{42f-1}+10)/6$. Then $m_2/7$ is an integer which is coprime to $42$. Now let $$X=(\Z_7 {\times} \Z_{m_2/7}){:(\Z_3\times \D_8)}=(\l a_1\r \times \l a_2\r ) {:} (\l c\r \times \l \rho_0\r{:} \l \tau_0\r),$$ where $[a_1, \rho_0]=[a_2, \rho_0]=[a_2,c]=1$, $a_1^c=a_1^2$, $a_1^{\tau_0}=a_1^{-1}$ and $a_2^{\tau_0}=a_2^{-1}$. Furthermore, set $x=a_1a_2c\rho_0$ and $y=\tau_0$. Then the bi-rotary map $\Map(X,x,y)$ has Euler characteristic 
 \[\chi =24m_2 ( \frac{7}{12m_2} +\frac{1}{4m_2}-\frac{1}{2})=20-12m_2=-2^{42f}. \]  
  Moreover, we have $\O_{2}(X)=\l \rho_0\r \cong \Z_2 $ and the Hall $\{2,3\}$-subgroup of $X/\O_{2}(X)$ is isomorphic to  $\l \bar{x}\r\times \l \bar{y}\r \cong \Z_3\times \Z_2$. 
 This gives examples of Line~2 in Table~\ref{table solvable}. \qed
\end{example}



\begin{example}
 For any positive integer $f$, let $k_2=(2^{20f-10}+1)/41$. 
 Then $k_2$ is an integer 
  and let $X=\Z_{k_2}{\times}\D_{168}=\l b\r{\times}(\l g\r{:}\l d\r)$. Set $x=bg$ and $ y=d$. Then the  bi-rotary map $\Map(X,x, y)$ is of  Euler characteristic
    \[\chi=168k_2(\frac{1}{84k_2}-\frac{1}{2}+\frac{1}{84})=2-82k_2=-2^{20f-9} .\]
      Moreover, we have $\O_{2}(X)=\l g^{21}\r\cong \Z_4  $ and the Hall $\{2,3\}$-subgroup of $X/\O_{2}(X)$ is isomorphic to  $\D_6$. 
 This gives examples of Line~3 in Table~\ref{table solvable}. \qed
\end{example}

\begin{example}
 For any positive integer $f$,
  let $m=(11^{54f-45}+55)/3^3$. 
  Then 
  $m/66$ is  coprime to $66$. 
 Let $X=\Z_{55}{\times}\D_{m}=\l g_1\r{\times}(\l g_2\r{:}\l g_3\r)$. 
 Set $x=g_1g_2g_3$ and $ y=g_3$. Then the  bi-rotary map $\Map(X,x, y)$ is of  Euler characteristic
 \[
 \chi= 55m(\frac{1}{2{\cdot}55}-\frac{1}{2}+\frac{1}{m})=55-3^3m=-11^{54f-45}. 
 \]
  Moreover, we have $\O_{11}(X)=\l g_1^{5},g_2^{m/22} \r\cong \Z^2_{11}  $ and the Hall $\{2,3\}$-subgroup of $X/\O_{11}(X)$ is isomorphic to  $\D_6$. 
 This gives examples of Line~4 in Table~\ref{table solvable}.
\qed \end{example}

\begin{example}
 For any positive integer $f$ with $f\not\equiv 3 \pmod{11}$,
  let $m_2=(2^{290f-81}+5)/59$. Then $m_2/11$ is an integer which is coprime to $2{\cdot}3{\cdot}5{\cdot}11$. Now, set 
  \[X=(\Z_{11}\times\Z_{\frac{m_2}{11}}){:}(\Z_5{\times}(\Z_3{:}\D_{16}))=(\l a_1\r \times \l a_2\r){:}(\l b\r{\times}(\l c\r{:}(\l x_0\r{:}\l  y_0\r))),\]
   such that 
    \[ (a_1a_2)^{b}=a_1^4a_2, \ (a_1a_2)^{c}=a_1a_2, \ (a_1a_2c)^{x_0}=a_1a_2^{-1}c, \ (a_1a_2c)^{ y_0}=a_1^{-1}a_2^{-1}c^{-1}.  \]
 Set $x=a_1a_2bcx_0$ and $ y= y_0$. Then the  bi-rotary map $\Map(G,x, y)$ is of  Euler characteristic
 \[ \chi= 240m_2(\frac{1}{120}-\frac{1}{2}+\frac{1}{24m_2})=10-118m_2= -2^{290f-80}.\]
   Moreover, we have $\O_{2}(X)=\l x_0^2\r\cong \Z_{4}  $ and the Hall $\{2,3\}$-subgroup of $X/\O_{2}(X)$ is isomorphic to  $\Z_2\times \D_6$. 
 This gives examples of Line~5 in Table~\ref{table solvable}.
\qed \end{example}

\begin{example}  
 For any positive integer $f$ with $f\not\equiv 0 \pmod{3}$ and  $f\not\equiv 252 \pmod{421}$,
  let $m_2=(2^{1260f-1192}+5)/3^3$. Then $m_2/421$ is an integer which is coprime to $2{\cdot}3{\cdot}5{\cdot}421$. Now, set 
  \[X=(\Z_{421}\times \Z_{\frac{m_2}{421}}){:}(\Z_5{\times}(\Z_3{:}\D_{8}))=(\l a_1\r\times \l a_2\r){:}(\l b\r{\times}(\l c\r{:}(\l x_0\r{:}\l  y_0\r))),\]
   such that 
    \[ (a_1a_2)^{b}=a_1^{252}a_2, \ (a_1a_2)^{c}=a_1a_2, \ (a_1a_2c)^{x_0}=a_1a_2^{-1}c^{-1}, \ (a_1a_2c)^{ y_0}=a_1^{-1}a_2^{-1}c^{-1}.  \]
 Set $x=abcx_0$ and $ y= y_0$. Then the  bi-rotary map $\Map(X,x, y)$ is of  Euler characteristic
 \[ \chi= 120m_2(\frac{1}{20}-\frac{1}{2}+\frac{1}{12 m_2})=10-54m_2= -2^{1260f-1191}.\]
  Moreover, we have $\O_{2}(X)=\l x_0^2\r\cong \Z_{2}  $ and the Hall $\{2,3\}$-subgroup of $X/\O_{2}(X)$ is isomorphic to  $\Z_2\times \D_6$. 
 This gives examples of Line~6 in Table~\ref{table solvable}.
\qed \end{example}

\begin{example}
For any positive integer $f$ with $f\not\equiv 0\pmod{3}$ and $f\not\equiv 4\pmod{5}$, set $k=11^{2f-1}+4$. Then $k/15$ is an integer which is coprime to $2{\cdot}3{\cdot}5{\cdot}11$.
Let $H=\l x,y\r$ be a subgroup of $\GL(3,11)$, where
\[x_0=\begin{pmatrix}
 0&0&-2\\
 -2&0&0\\
 0&-2&0\\
\end{pmatrix} \ \mbox{and }\ y_0=\begin{pmatrix}
 1&&\\
 &-1&\\
 &&-1\\
\end{pmatrix}.\]
Let $U=\l u_1,u_2,u_3\r \cong \Z_{11}^3$ and let $X=\Z_{k/15}{\times} (U{:}H)$, where $H$ acts on $U$ naturally and $\Z_{k/15}=\l b\r$. Then $X\cong \Z_{k/15}{\times}\Z_{11}^3{:}(\A_4{\times }\Z_5)$. Now set $x=bu_1x_0$, $y=y_0$ and let $\Map(X,x,y)$ be a bi-rotary map of Euler characteristic
\[\chi=2^2{\cdot} 11^3{\cdot}k(\frac{1}{k}-\frac{1}{2}+\frac{1}{4})=4\cdot 11^3-11^3\cdot k=-11^{2f+2}.\]
  Moreover, we have $\O_{11}(X)=U\cong \Z_{11}^3  $ and the Hall $\{2,3\}$-subgroup of $X/\O_{2}(X)$ is isomorphic to  $\A_4$. 
 This gives examples of Line~7 in Table~\ref{table solvable}.
\qed \end{example}

\subsection{\texorpdfstring{Examples of bi-rotary maps with $\chi=-p^n$: non-solvable cases}{Examples of non-solvable cases }}
In this subsection, we present examples for the four cases presented in Theorem~\ref{thm:main-non-solvable}.


 \begin{example} 
Suppose $X=\Z_7^3.\PSL(2,7)$ where $\PSL(2,7)\cong \Omega(3,7)$ acts irreducibly on $\Z_7^3$ and this extension is non-split. 
With the aid of Magma\cite{MAGMA}, there is a unique non-split group $X$ and $X$ has presentation $X=\l x, y\mid x^3, y^2,[ y,x]^4,R\r$ for some exceptional relations $R$. Hence $\Map(X,x, y)$ is a bi-rotary of characteristic 
\[\chi=2^3{\cdot} 3{\cdot} 7^4(\frac{1}{3}-\frac{1}{2}+\frac{1}{8})=-7^4.\]
This gives an example of Theorem~\ref{thm:main-non-solvable} {\rm (i)}.
\qed \end{example}

\begin{example}
    Let $X=\PSL(2,7)\times \Z_{\ell}$ where $\ell=(7^6+8)/9$.
    By \cite{dazevedoBirotaryMapsNegative2019}, $\PSL(2,7)$ has
    a rotary pair $(x, y)$ such that $|x|=3$ and $|\omega|=4$.
    Then $X$ has a rotary pair $(x_1, y_1)$ such that $|x_1|=3\ell$ and $|\omega|=4$.
    Hence $\Map(X,x_1, y_1)$ is  a bi-rotary of characteristic
    \[\chi=2^3{\cdot} 3{\cdot} 7{\cdot} \ell(\frac{1}{3\ell}-\frac{1}{2}+\frac{1}{8})=-7^7.\]
    This gives an example of Theorem~\ref{thm:main-non-solvable} {\rm (i)}.
\qed \end{example}

\begin{example}
Suppose $X=A_5$. Let $x=(1\ 3\ 5)$ and $ y=(1\ 2)(3\ 4)$. Then $\Map(X,x, y)$ is a bi-rotary of characteristic $\chi=-4$.
This gives an example of Theorem~\ref{thm:main-non-solvable} {\rm (ii)}.
\qed \end{example}

\begin{example}
    Let $P=\l x_2\r{:}\l  y_2\r\cong \D_{16}$ be a dihedral group, and let $M$ be the maximal subgroup of $X$ which contains $x_2 y_2$. Let $\varphi: P\rightarrow \Aut(\Z_5)$ be the homomorphism from $P$ to $\Aut(\Z_5)$ with kernel $M$. Now
    let $X=\PSL(2,8)\times \Z_{k'}\times (\Z_5{{:}_\varphi} P)$, where $k'=(2^{443}+45)/(7{\cdot} 179)$.
    Moreover, let $a,b$ be the generators of $\Z_5$ and $\Z_{k'}$ respectively. Note that, there exists generating pair $x_1, y_1$ of $\PSL(2,8)$ with $|x_1|=7,$ $| y_1|=2$ and $| y_1 y_1^{x_1}|=9$. Set $x= x_1abx_2$ and $ y= y_1 y_2$. 
    Then $|x|=56k'$, $| y y^x|=180$ and $\l x, y\r =G$.
    This gives a bi-rotary map $\Map(X,x, y)$  of 
 Euler characteristic
    \[\chi=2^7{\cdot }5{\cdot} 7{\cdot} 3^2{\cdot}k'(\frac{1}{7{\cdot} 2^3{\cdot} k'}-\frac{1}{2}+\frac{1}{2^3{\cdot} 5{\cdot} 3^2})=-2^{447}.\]
    This gives an example of Theorem~\ref{thm:main-non-solvable} {\rm (ii)}.
\qed \end{example}

\begin{example}
	Let $x_1=(1\ 2\ 3)$, $ y_1=(1\ 4)(3\ 5)$ be two elements in $\A_5$. We have 
	$|x_1|=3$, $| y_1|=2$, $| y_1 y_1^{x_1}|=5$ and $\l x_1,  y_1\r=\A_5$.
    Let $P=(\l u_1\r {\times} \l u_2\r){:}\l u_3\r\cong \Z_2^2{:}\Z_4$, where $u_1^{u_3}=u_2$ and $u_2^{u_3}=u_1$. Set $x_2=u_3$ and $ y_2=u_1$. It follows $|x_2|=4$ and $| y_2|=| y_2 y_2^{x_2}|=2$. Let $m'=(2^{293}+33)/(25{\cdot} 13)$. Note that $m'=23{\cdot}p_1{\cdot}p_2$, where $p_1<p_2$ are two primes and $4\mid p_1-1$. Suppose that $\l a_0\r{\times}\l a_1\r{\times}\l a_2\r\cong \Z_{m'}$ and $\l b\r\cong \Z_{11}$, where $|a_0|=23$, $|a_1|=p_1$ and $|a_2|=p_2$. 
    Now, set 
    \[X=\A_5{\times}(\Z_{m'}{:}(\Z_{11}{\times}P))=\A_5{\times}(\l a_0\r{\times}\l a_1\r{\times}\l a_2\r){:}(\l b\r{\times}\l x_2,  y_2\r ),\]
    such that 
    \[(a_0a_1a_2)^{b}=a_0^{2}a_1a_2, \ (a_0a_1a_2)^{ y_2}=a_0^{-1}a_1^{-1}a_2^{-1}, \ (a_0a_1a_2)^{x_2}=a_0a_1^{e}a_2, \]
    where $e=1541127$.
    Moreover, set $x=x_1 a_0a_1a_2bx_2$ and $ y= y_1 y_2$. 
    Then $|x|=132=2^2{\cdot}3{\cdot}11$, $| y y^x|=10m'$ and $\l x, y\r =X$.
    This gives a bi-rotary map $\Map(X,x, y)$  of 
 Euler characteristic
    \[\chi=60{\cdot}2^4{\cdot }11{\cdot}m'(\frac{1}{2^2{\cdot}3{\cdot}11}-\frac{1}{2}+\frac{1}{20m'})=-2^{297}.\]
    This gives an example of Theorem~\ref{thm:main-non-solvable} {\rm (ii)}.
\qed \end{example}

\begin{example}
    Let $X_1=\PGL(2,7)$ and $X_2=\Z_{13}\times \Z_{m'}{:}\operatorname{D}_8=\l b\r\times (\l a\r{:}(\l x_2\r{:}\l  y_2\r))$, where $|a|=m'=\frac{2^{89}+7\cdot 13}{3\cdot 181}$, $|b|=13$ and $a^{x_2}=a^{ y_2}=a^{-1}$.
Using Magma, one can find that there exist $x_1,  y_1\in X_1$, such that $|x_1|=7, | y_1|=2$, $|[x_1,  y_1]|=3$ and $\l x_1,  y_1\r=X_1$. Now set $X_0=X_1\times X_2$,  $x=x_1 abx_2$, $y= y_1 y_2$ and $X=\l x, y\r \leq X_0$. 
Then $|x|=2^2{\cdot}7{\cdot}13$, $|[x,y]|=2\cdot 3\cdot m'$, and $X$ is a subgroup of index $2$ in $X_0$ with ${\rm O}_2(X)=\l x_2^2\r$.
Moreover $X/({\rm O}_2(X)\mathrm{soc}(X_1))\cong \operatorname{D}_{2m'}\times \Z_2$. Note that ${\rm O}_2(\operatorname{D}_{2m'}\times \Z_2)=\Z_2$. 
This gives a bi-rotary map $\Map(X,x,y)$ of Euler characteristic
    \[\chi=2^6{\cdot }3{\cdot} 7{\cdot}13{\cdot} m'(\frac{1}{2^2{\cdot}7{\cdot}  13}-\frac{1}{2}+\frac{1}{2^2{\cdot} 3{\cdot} m'})=-2^{93}.\]
This gives an example of Theorem~\ref{thm:main-non-solvable}~{\rm (iii)}.

\qed \end{example}

\begin{example}
    Let $X_1=\PGL(2,31)$ and $X_2=\Z_{m'}{:}\operatorname{D}_8=\langle a\r  {:}(\l x_2\r {:} \l  y_2\r)$, where $|a|=m'=\frac{2^{69}+15}{31\cdot 29}$ and $a^{x_2}=a^{ y_2}=a^{-1}$. By Magma, there exist $x_1,  y_1\in X_1\setminus \mathrm{soc}(X_1)$, such that $|x_1|=30, | y_1|=2$, $|[x_1,  y_1]|=31$ and $\l x_1,  y_1\r=X_1$. Now set $X_0=X_1\times X_2$, $x=x_1 ax_2$, $ y= y_1 y_2$ and $X=\l x,  y\r \leq X_0$. 
Then $|x|=2^2\cdot 3\cdot 5$, $|[x, y]|=2\cdot 31\cdot m'$, and $X$ is a subgroup of index $2$ in $X_0$ with ${\rm O}_2(X)=\l x_2^2, x_2 y_2\r$.
Moreover $X/({\rm O}_2(X)\mathrm{soc}(X_1))\cong \operatorname{D}_{2m'}$. Note that ${\rm O}_2(\operatorname{D}_{2m'})=1$. 
This gives a bi-rotary map $\Map(G,x, y)$ of Euler characteristic
\[\chi=2^8\cdot 3\cdot 5\cdot 31\cdot m'\left(\frac{1}{2^2\cdot 3\cdot 5}-\frac{1}{2}+\frac{1}{2^2\cdot 31\cdot m'}\right)
=-2^{75}\]
This gives an example of Theorem~\ref{thm:main-non-solvable} {\rm (iv)}.
\qed \end{example}

\vskip0.2in
\noindent\thanks{{\bf Acknowledgments.}
This work was supported by the National Natural Science Foundation of China (No. 12101518), the Fundamental Research Funds for the Central Universities (No. 20720210036, 20720240136).}

\bibliographystyle{abbrv}
\bibliography{bib}

\begin{thebibliography}{10}

\bibitem{Asciak2023OrientableAN}
K.~Asciak, M.~D.~E. Conder, S.~Pavl\'ikov\'a, and J.~{\v S}ir\'a{\v n}.
\newblock Orientable and non-orientable regular maps with given exponent group.
\newblock {\em J. Algebra}, 620:519--533, 2023.

\bibitem{MAGMA}
W.~Bosma, J.~Cannon, and C.~Playoust.
\newblock The {M}agma algebra system. {I}. {T}he user language.
\newblock {\em J. Symbolic Comput.}, 24(3-4):235--265, 1997.
\newblock Computational algebra and number theory (London, 1993).

\bibitem{bredadazevedoRegularPseudoorientedMaps2015}
A.~Breda~d'Azevedo, D.~A. Catalano, and R.~Duarte.
\newblock Regular pseudo-oriented maps and hypermaps of low genus.
\newblock {\em Discrete Math.}, 338(6):895--921, 2015.

\bibitem{dazevedo2004Classification}
A.~Breda~d'Azevedo, R.~Nedela, and J.~{\v{S}}ir\'{a}{\v{n}}.
\newblock Classification of regular maps of negative prime {E}uler characteristic.
\newblock {\em Trans. Amer. Math. Soc.}, 357(10):4175--4190, 2005.

\bibitem{Sylow-metacyclic1981}
D.~Chillag and J.~Sonn.
\newblock Sylow-metacyclic groups and {${\bf Q}$}-admissibility.
\newblock {\em Israel J. Math.}, 40(3-4):307--323 (1982), 1981.

\bibitem{Conder2012ClassificationOR}
M.~Conder, R.~Nedela, and J.~{\v{S}}ir\'{a}{\v{n}}.
\newblock Classification of regular maps of {E}uler characteristic {$-3p$}.
\newblock {\em J. Combin. Theory Ser. B}, 102(4):967--981, 2012.

\bibitem{CONDER20102620}
M.~Conder, P.~Poto{\v{c}}nik, and J.~{\v{S}}ir\'{a}{\v{n}}.
\newblock Regular maps with almost {S}ylow-cyclic automorphism groups, and classification of regular maps with {E}uler characteristic {$-p^2$}.
\newblock {\em J. Algebra}, 324(10):2620--2635, 2010.

\bibitem{Conder2016OrientablyregularMW}
M.~Conder and J.~{\v S}ir\'a{\v n}.
\newblock Orientably-regular maps with given exponent group.
\newblock {\em Bull. Lond. Math. Soc.}, 48(6):1013--1017, 2016.

\bibitem{dazevedoBirotaryMapsNegative2019}
A.~B. d'Azevedo, D.~A. Catalano, and J.~{\v{S}}ir\'{a}{\v{n}}.
\newblock Bi-rotary maps of negative prime characteristic.
\newblock {\em Ann. Comb.}, 23(1):27--50, 2019.

\bibitem{Gill2013}
N.~Gill.
\newblock Orientably regular maps with {E}uler characteristic divisible by few primes.
\newblock {\em J. Lond. Math. Soc. (2)}, 88(1):118--136, 2013.

\bibitem{graver1997Locally}
J.~E. Graver and M.~E. Watkins.
\newblock Locally finite, planar, edge-transitive graphs.
\newblock {\em Mem. Amer. Math. Soc.}, 126(601):vi+75, 1997.

\bibitem{orireg-multiK_n}
{\v S}.~Gy\"urki, S.~Pavl\'ikov\'a, and J.~{\v{S}}ir\'a{\v{n}}.
\newblock Orientably-regular embeddings of complete multigraphs.
\newblock {\em J. Combin. Theory Ser. B}, 171:71--95, 2025.

\bibitem{Hua2024RegularMW}
P.~Hua, C.~H. Li, J.~B. Zhang, and H.~Zhou.
\newblock Regular maps with square-free {E}uler characteristic.
\newblock {\em Comm. Algebra}, 53(6):2266--2277, 2025.

\bibitem{huppert1967Endliche}
B.~Huppert.
\newblock {\em Endliche {G}ruppen. {I}}, volume Band 134 of {\em Die Grundlehren der mathematischen Wissenschaften}.
\newblock Springer-Verlag, Berlin-New York, 1967.

\bibitem{huppert1982Finite}
B.~Huppert and N.~Blackburn.
\newblock {\em Finite {G}roups. {III}}, volume 243 of {\em Grundlehren der Mathematischen Wissenschaften}.
\newblock Springer-Verlag, Berlin-New York, 1982.

\bibitem{james1985Regular}
L.~D. James and G.~A. Jones.
\newblock Regular orientable imbeddings of complete graphs.
\newblock {\em J. Combin. Theory Ser. B}, 39(3):353--367, 1985.

\bibitem{jones2019Automorphism}
G.~A. Jones.
\newblock Automorphism groups of edge-transitive maps.
\newblock {\em Acta Math. Univ. Comenian. (N.S.)}, 88(3):841--847, 2019.

\bibitem{kurzweil2004theory}
H.~Kurzweil and B.~Stellmacher.
\newblock {\em The {T}heory of {F}inite {G}roups}.
\newblock Universitext. Springer-Verlag, New York, 2004.
\newblock An introduction, Translated from the 1998 German original.

\bibitem{lando2004Graphs}
S.~K. Lando and A.~K. Zvonkin.
\newblock {\em Graphs on {S}urfaces and {T}heir {A}pplications}, volume 141 of {\em Encyclopaedia of Mathematical Sciences}.
\newblock Springer-Verlag, Berlin, 2004.
\newblock With an appendix by Don B. Zagier, Low-Dimensional Topology, II.

\bibitem{Li2024ArctransitiveMW}
C.~H. Li and L.~Liu.
\newblock Arc-transitive maps with edge number coprime to the {E}uler characteristic -- {I}, 2025.
\newblock arXiv:2301.23456.

\bibitem{li2024Locally}
C.~H. Li, C.~E. Praeger, and S.~J. Song.
\newblock Locally finite vertex-rotary maps and coset graphs with finite valency and finite edge multiplicity.
\newblock {\em J. Combin. Theory Ser. B}, 169:1--44, 2024.

\bibitem{LINS1982171}
S.~Lins.
\newblock Graph-encoded maps.
\newblock {\em J. Combin. Theory Ser. B}, 32(2):171--181, 1982.

\bibitem{siran2013How}
J.~{\v{S}}ir\'{a}{\v{n}}.
\newblock How symmetric can maps on surfaces be?
\newblock In {\em Surveys in combinatorics 2013}, volume 409 of {\em London Math. Soc. Lecture Note Ser.}, pages 161--238. Cambridge Univ. Press, Cambridge, 2013.

\bibitem{wilsonATLAS}
R.~Wilson, P.~Walsh, et~al.
\newblock A {{World-Wide-Web}} {{ATLAS}} of {{Finite Group Representations}}.
\newblock http://brauer.maths.qmul.ac.uk/Atlas/v3/.

\bibitem{wilson-gtm251}
R.~A. Wilson.
\newblock {\em The {F}inite {S}imple {G}roups}, volume 251 of {\em Graduate Texts in Mathematics}.
\newblock Springer-Verlag London, Ltd., London, 2009.

\bibitem{wilson_1978}
S.~E. Wilson.
\newblock Riemann surfaces over regular maps.
\newblock {\em Canadian J. Math.}, 30(4):763--782, 1978.

\end{thebibliography}

\end{document}